\numberwithin{equation}{section}
\newtheoremstyle{myremark}{10pt}{10pt}{}{}{\bfseries}{.}{.5em}{}
 \newtheorem{thm}{Theorem}[section]
 \newtheorem{lemma}[thm]{Lemma}
 \newtheorem{prop}[thm]{Proposition}
 \theoremstyle{definition}
 \newtheorem{defn}[thm]{Definition}
 \newtheorem{example}[thm]{Example}
 \newtheorem{remark}[thm]{Remark}
\newtheorem{theorem}[thm]{Theorem}
 \newcommand{\D}{\bigtriangleup_2}
\title[On fractional Orlicz-Hardy inequalities]{On fractional Orlicz-Hardy inequalities}
\author[T. V. Anoop, P. Roy, and S. Roy]{T. V. Anoop$^{1,*}$, Prosenjit Roy$^{2}$, and Subhajit Roy$^{3}$}
\keywords{Orlicz spaces, Fractional order Sobolev spaces, Hardy inequalities, Caffarelli-Kohn-Nirenberg inequalities}
\subjclass{ 46E30, 35R11, 35A23.}
\email{anoop@iitm.ac.in, prosenjit@iitk.ac.in, rsubhajit.math@gmail.com}}
\thanks{$^*$Corresponding author.}
\begin{document}

\maketitle

\centerline{$^{1,3}$Department of Mathematics, Indian Institute of Technology Madras,
 }
\centerline{Chennai  600036, India}
\centerline{$^{2}$Department of Mathematics and Statistics, Indian Institute of Technology Kanpur,}
\centerline{Kanpur 208016, India}

\begin{abstract} We establish the weighted fractional Orlicz-Hardy inequalities for various  Orlicz functions. Further, we identify the critical cases for each Orlicz function and prove the weighted fractional Orlicz-Hardy inequalities with logarithmic correction. Moreover, we discuss the analogous results in the local case. In the process, for any Orlicz function $\Phi$  and for any $\Lambda>1$, the following inequality is established
$$
 \Phi(a+b)\leq \lambda\Phi(a)+\frac{C(
 \Phi, \Lambda
 )}{(\lambda-1)^{p_\Phi^+-1}}\Phi(b),\;\;\;\forall\,a,b\in [0,\infty),\,\forall\,\lambda\in (1,\Lambda],
$$
 where $p_\Phi^+:=\sup\big\{t\varphi(t)/\Phi(t):t>0\big\},$ $\varphi$ is the right derivatives of $\Phi$ and $C(
 \Phi, \Lambda
 )$ is a positive constant that depends only on $\Phi$ and $\Lambda.$
\end{abstract}

\section{Introduction}
For  $N\in \mathbb{N}$, recall the classical {\it{Hardy inequality}}:  
\begin{equation}\label{p-Hardy}
    \int_{\mathbb{R}^N}\frac{|u(x)|^p}{|x|^p}dx\leq \left|\frac{p}{N-p}\right|^p\int_{\mathbb{R}^N}|\nabla u(x)|^pdx,
\end{equation}  
for all $ u\in \mathcal{C}_c^1(\mathbb{R}^N)$ if $1<p<N$ and for all $ u\in \mathcal{C}_c^1(\mathbb{R}^N\setminus\{0\})$ if $p>N$ (see \cite[Theorem 1.2.5]{Evans2015}).
The above inequality has been extended in several directions. For example, in  \cite{Adimurthi2002, Anoop2021, Anoop2015, Tarantello2002}, the Hardy potential $\frac{1}{|x|^p}$ is replaced with more general weight functions and   \cite{subha,  Mihai2012, Cianchi1999, Kal2009} replaced the convex function $t^p$ with a more general \textit{Orlicz function} satisfying certain sufficient conditions.
Another extension of Hardy's inequalities is the  {\it{Caffarelli–Kohn–Nirenberg (C-K-N) inequalities}}, which were established by Caffarelli, Kohn, and Nirenberg in \cite{C-K-N1982,C-K-N1984}.
In \cite{Nguyen2018}, Nguyen and Squassina established the fractional version of the C-K-N inequalities. We first introduce some notations to state a particular case of their result. For $u\in \mathcal{C}(\mathbb{R}^N)$ and $s\in (0,1)$, let $D_su$ be the s-H\"older quotient and $d\mu$ be the product measure on $\mathbb{R}^N\times \mathbb{R}^N$ defined as $$D_su(x,y)=\frac{u(x)-u(y)}{|x-y|^s},\;\;\;d\mu=\frac{dxdy}{|x-y|^N}.$$
Let $p>1$ and $\alpha_1,\,\alpha_2,\,\,\gamma\in \mathbb{R}$ be such that $\gamma=s-\alpha_1-\alpha_2.$    
 Then Theorem 1.1 of \cite{Nguyen2018} (with $\tau=p,\,a=1$) establishes the  following  {\it{weighted fractional Hardy inequalities}:}
 \begin{enumerate}[(i)]
     \item  for $\gamma<N/p,$ 
 \begin{equation}\label{frac-norm}
   \int_{\mathbb{R}^N}\left(\frac{|u(x)|}{|x|^{\gamma }}\right)^p dx\leq C \int_{\mathbb{R}^N}\int_{\mathbb{R}^N}\Big(|x|^{\alpha_1}|y|^{\alpha_2 }|D_su(x,y)|\Big)^p d\mu,\;\;\;\,\forall\,u\in\mathcal{C}_c^1(\mathbb{R}^N),
\end{equation}
\item for $\gamma>N/p,$ 
\begin{equation}\label{frac-norm*}
   \int_{\mathbb{R}^N}\left(\frac{|u(x)|}{|x|^{\gamma }}\right)^p dx\leq C \int_{\mathbb{R}^N}\int_{\mathbb{R}^N}\Big(|x|^{\alpha_1}|y|^{\alpha_2 }|D_su(x,y)|\Big)^p d\mu,\;\;\;\,\forall\,u\in\mathcal{C}_c^1(\mathbb{R}^N\setminus\{0\}),
   \end{equation}
 \end{enumerate}
 where $C$ is a positive constant independent of $u.$ The above inequalities in the case $\alpha_1=\alpha_2=0$ with the best constants were obtained for $p=2$ in \cite{herbst1977} and for any $p\geq 1$ in \cite{frank2008}. We refer to \cite{Beckner2008, Leib2008, Mazya2002} for further reading on these inequalities.  
For $\alpha_1=\alpha_2\in\big((sp-N)/2p,0\big]$,  \eqref{frac-norm}  was proved in 
\cite{abdell2017}. In \cite{Dyda2022}, the above inequalities were derived  for $\alpha_1+\alpha_2\in (-N/p,s)$. The Hardy inequality has also been extended to general domains (known as boundary Hardy inequality) by replacing $|x|$ with the distance function from the boundary of the underlying domain; see \cite{adisahuroy, dyda2011, Dyda2007, Li2012, Loss2010} and the references therein. For some recent developments in Hardy inequalities; see \cite{weiwei2022, Chen2021, Li2023}. We are interested in generalizing \eqref{frac-norm} and \eqref{frac-norm*} by replacing the convex function $t^p$ with a more general Orlicz function.

 \begin{defn}[Orlicz function]
    A  continuous, convex function  $\Phi:[0,\infty)\to [0,\infty)$ is called an Orlicz function if it has the following properties:
    \begin{enumerate}[(a)]
        \item $\lim_{t\to 0}\frac{\Phi(t)}{t}=0$ and $\lim_{t\to \infty}\frac{\Phi(t)}{t}=\infty$,
        \item $\Phi$ satisfies the  $\D$-condition, i.e. there exists a constant $C> 0$ such that 
\begin{equation*}
    \Phi(2t)\leq C\Phi(t),\;\;\;\forall\,t\geq 0.
\end{equation*}
    \end{enumerate}
 \end{defn}
\noindent It follows from \cite[Theorem 1.1]{Krasn1961} that an Orlicz function can be represented in the form
\begin{equation}\label{right-deri}
    \Phi(t)=\int_0^t \varphi(s) ds\;\;\;  \text{for}\,\, t\geq 0,
\end{equation} 
where $\varphi$ is a non-decreasing right continuous function on $[0,\infty)$ satisfying $\varphi(0)=0,$ $\varphi(t)>0$ for $t>0,$ and $\lim_{t\to \infty}\varphi(t)=\infty$.
\smallskip

Associated to an Orlicz function $\Phi$, we define $p_\Phi^-$ and $p_\Phi^+$ (cf. \cite{Mihai2012}) as 
 \begin{equation*}
p_\Phi^-:=\inf_{t>0}\frac{t\varphi(t)}{\Phi(t)},\;\;\;\;\;\; p_\Phi^+:=\sup_{t>0}\frac{t\varphi(t)}{\Phi(t)}.
 \end{equation*}
Notice that, for $\Phi(t)=t^p$ with $p\in (1,\infty)$ we have $p_\Phi^+=p_\Phi^-=p.$
 For an Orlicz function $\Phi$, it is easy to see that  $$\Phi(t)\leq t\varphi(t)\leq\Phi(2t)\leq C \Phi(t),\;\;\; \forall\,t \in[0,\infty).$$ This implies that $ 1\le p_\Phi^- \leq p_\Phi^+< \infty.$ 
\smallskip

We say two Orlicz functions $\Phi$ and   $\Psi$ are equivalent  ($\Phi\asymp\Psi$) if there exist $C_1,C_2>0$ such that $$C_1\Phi(t)\le \Psi(t)\le C_2\Phi(t),\quad\forall\,t\ge 0.$$
If $\Phi\asymp\Psi,$ then one can verify that $p_\Psi^- \le p_\Phi^+$ and $p_\Phi^- \le p_\Psi^+$ (see Lemma \ref{lemm-equivalent}). 
Now, for a given Orlicz function $\Phi$, we can define the following two quantities :
  \begin{align*}
      p_\Phi^\ominus&= \sup\{p_\Psi^-: \Phi \asymp \Psi\}, \\
      p_\Phi^\oplus&= \inf\{p_\Psi^+: \Phi \asymp \Psi\}.      
  \end{align*}
 Therefore,  $ p_\Phi^- \le p_\Phi^\ominus \le  p_\Phi^\oplus \le  p_\Phi^+.$ Indeed, $p_\Phi^\ominus$ and  $p_\Phi^\oplus$ remain the same for all equivalent Orlicz functions. In particular, for $\Phi \asymp A_p,$ where $A_p(t):=t^p$ we have  $p_\Phi^\ominus = p_\Phi^\oplus=p.$ 

\smallskip

In this article, depending on the Orlicz function $\Phi$ and  values of $s\in (0,1),\,\alpha_1,\alpha_2\in \mathbb{R}$,  we identify some range of $\gamma=s-\alpha_1-\alpha_2$ for which the following \textit{weighted fractional Orlicz-Hardy inequality} holds:
\begin{equation}\label{Hardy}
    \int_{\mathbb{R}^N}\Phi\left(\frac{|u(x)|}{|x|^\gamma}\right)dx\leq C\int_{\mathbb{R}^N}\int_{\mathbb{R}^N}\Phi\left(|x|^{\alpha_1}|y|^{\alpha_2}|D_su(x,y)|\right)d\mu,\;\;\;\forall\,u\in \mathcal{C}_c^1(\mathbb{R}^N),
\end{equation}
 where $C$ is a positive constant. 
 For $\alpha_1=\alpha_2=0$ and $\gamma=s<N/p^+_\Phi$, the above inequality is established in \cite[Theorem 1.2]{salort2022*}. The above inequality with $\alpha_1=\alpha_2=0$ and $\gamma=s<N/p^\oplus_\Phi$ can  be derived from \cite[Theorem 5.1 and Proposition C]{Cianchi2022}.
The following is our first main result:
\begin{theorem}\label{Orlicz-Hardy} Let $N\geq 1,\,s\in (0,1)$, $\alpha_1, \, \alpha_2\in \mathbb{R}$,  and let
$\gamma:=s-\alpha_1-\alpha_2.$ For an Orlicz function $\Phi$, if  $\gamma<N/p^\oplus_\Phi$,  then \eqref{Hardy} holds and if $\gamma> N/p^\ominus_\Phi$, then \eqref{Hardy} fails. Furthermore, if $p^\ominus_\Phi$ is attained, then \eqref{Hardy}
fails also for $\gamma=N/p^\ominus_\Phi$.
\end{theorem}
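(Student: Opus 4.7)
The three assertions decouple. Throughout, the key enabler is the $\asymp$-invariance of \eqref{Hardy} up to multiplicative constants, combined with the definitions $p_\Phi^\oplus=\inf\{p_\Psi^+:\Psi\asymp\Phi\}$ and $p_\Phi^\ominus=\sup\{p_\Psi^-:\Psi\asymp\Phi\}$: one may always replace $\Phi$ by an equivalent Orlicz function $\Psi$ whose Simonenko indices sit as close to the critical exponent $N/\gamma$ as desired.

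\emph{Sufficiency ($\gamma<N/p_\Phi^\oplus$).} I would first pick $\Psi\asymp\Phi$ with $p_\Psi^+$ close enough to $p_\Phi^\oplus$ that $\gamma\,p_\Psi^+<N$ still holds, and establish \eqref{Hardy} for $\Psi$ in place of $\Phi$. The strategy is to adapt the proof of the $L^p$ weighted fractional Hardy inequality of \cite{Nguyen2018,Dyda2022}, replacing the elementary inequality $(a+b)^p\le(1+\epsilon)^{p-1}a^p+C(\epsilon)b^p$ by the modular inequality announced in the abstract,
\[
\Psi(a+b)\le\lambda\,\Psi(a)+\frac{C(\Psi,\Lambda)}{(\lambda-1)^{p_\Psi^+-1}}\,\Psi(b),\qquad \lambda\in(1,\Lambda],
\]
with $p_\Psi^+$ taking the role of the exponent $p$. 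The hypothesis $\gamma\,p_\Psi^+<N$ is precisely what makes the dyadic sums (equivalently, the radial integrals) produced by this scheme convergent, exactly as $\gamma p<N$ does in the classical case.

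\emph{Failure ($\gamma>N/p_\Phi^\ominus$, and the critical case).} For the strict case I select $\Psi\asymp\Phi$ with $p_\Psi^->N/\gamma$; in the critical case $\gamma=N/p_\Phi^\ominus$ under attainment, I select $\Psi$ with $p_\Psi^-=N/\gamma$. In both cases $\gamma\,p_\Psi^-\ge N$. Test \eqref{Hardy} against any $u\in\mathcal{C}_c^1(\mathbb{R}^N)$ normalised by $u(0)=1$, so that $|u|\ge 1/2$ on some ball $B_\varepsilon$. Using the standard monotonicity of $t\mapsto\Psi(t)/t^{p_\Psi^-}$ on $(0,\infty)$, one obtains $\Psi(|u(x)|/|x|^\gamma)\ge c\,|x|^{-\gamma p_\Psi^-}$ on a smaller ball, and $\int_{B_\varepsilon}|x|^{-\gamma p_\Psi^-}\,dx=\infty$ since $\gamma p_\Psi^-\ge N$. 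Hence the left-hand side of \eqref{Hardy} for $\Psi$, and therefore for $\Phi$, is infinite, while the right-hand side is finite for such $u$ by routine diagonal/off-diagonal estimates (using $\Psi(t)\lesssim t^{p_\Psi^-}$ for small $t$ and $|D_su(x,y)|\lesssim|x-y|^{1-s}$). Without attainment only $p_\Psi^-<N/\gamma$ is achievable in the class $\Psi\asymp\Phi$, so this divergence argument would break down at $\gamma=N/p_\Phi^\ominus$, which is exactly why the attainment hypothesis is needed in part (iii).

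\emph{Main obstacle.} The technical heart lies in the sufficiency proof: upgrading an $L^p$ argument that exploits the homogeneity of $t^p$ to a general Orlicz setting requires a careful, step-dependent choice of the splitting parameter $\lambda$ so that the penalty factor $(\lambda-1)^{-(p_\Psi^+-1)}$ remains summable against the decay supplied by $\gamma p_\Psi^+<N$. Indeed, constructing and deploying the modular inequality above appears to be precisely the reason the authors single it out in the abstract. The failure statements, by contrast, reduce to a single local integrability check once the monotonicity of $t\mapsto\Psi(t)/t^{p_\Psi^-}$ is in hand.
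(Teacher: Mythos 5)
Your plan follows essentially the same route as the paper's proof: reduce to an equivalent $\Psi\asymp\Phi$ with $p_\Psi^+<N/\gamma$ (resp.\ $p_\Psi^-\ge N/\gamma$) via the definitions of $p_\Phi^\oplus$ and $p_\Phi^\ominus$, prove the subcritical inequality by the dyadic decomposition and annular Poincar\'e--Wirtinger scheme of Nguyen--Squassina with the modular splitting inequality of Lemma \ref{Lem-H} replacing the homogeneity of $t^p$, and obtain failure from the local non-integrability of $|x|^{-\gamma p_\Psi^-}$ through \eqref{H3}, exactly as in the paper's intermediate lemma and the short deduction of Theorem \ref{Orlicz-Hardy} from it. The only cosmetic difference is that in the subcritical case the paper absorbs with one fixed $\lambda=\Lambda$ chosen so that $\Lambda\, 2^{-N}\max\{2^{\gamma p_\Psi^-},2^{\gamma p_\Psi^+}\}<1$, so no step-dependent choice of $\lambda$ or summability bookkeeping of the penalty factor is needed there (that refinement is only required in the critical, log-corrected Theorem \ref{log-orlicz}).
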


Our proof is based on the dyadic decomposition of  $\mathbb{R}^N$, Poincar\'e inequalities for annulus, and a clever summation process to pass the information from a family of annuli to the whole space. The similar ideas are used in \cite{Nguyen2018} and \cite{Nguyenmarco2019}.\smallskip

If we restrict the class of functions to $\mathcal{C}_c^1(\mathbb{R}^N\setminus 
\{0\})$, then  we can obtain an analogue of \eqref{frac-norm*} for $\gamma>N/p^\ominus_\Phi$. Namely, we get the following weighted fractional Orlicz-Hardy inequality:
\begin{equation}\label{Hardy-0}
\int_{\mathbb{R}^N}\Phi\left(\frac{|u(x)|}{|x|^\gamma}\right)dx\leq C\int_{\mathbb{R}^N}\int_{\mathbb{R}^N}\Phi\left(|x|^{\alpha_1}|y|^{\alpha_2}|D_su(x,y)|\right)d\mu,\;\;\;\forall\,u\in \mathcal{C}_c^1(\mathbb{R}^N\setminus\{0\}),
\end{equation}
where $C$ is a positive constant. Notice that the above inequality holds for $\gamma<N/p^\oplus_\Phi$ (see Theorem \ref{Orlicz-Hardy}).
For $\alpha_1=\alpha_2=0$, $N=1$, and $\gamma=s>1/p^-_\Phi$, the above inequality was established in  \cite[Theorem 1.1]{salort2022}. For any  $N\ge 1,$  \eqref{Hardy-0}  with $\alpha_1=\alpha_2=0$ and $\gamma=s>N/p^\ominus_\Phi$, can be derived from \cite[Theorem 1.4]{roy2022}.
The following result allows $\alpha_1$ and $\alpha_2$ to be any real numbers. 
 \begin{theorem}\label{Orlicz-Hardy2} Let $N\geq 1,\,s\in (0,1)$, $\alpha_1, \, \alpha_2\in \mathbb{R}$,  and let
$\gamma:=s-\alpha_1-\alpha_2.$ Then, for an Orlicz function $\Phi$,  \eqref{Hardy-0} holds also  if $\gamma>N/p^\ominus_\Phi$.
\end{theorem}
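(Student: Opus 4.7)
My plan is to follow the blueprint of the proof of Theorem~\ref{Orlicz-Hardy} but to reverse the direction of the dyadic summation, exploiting that $u$ vanishes in a neighborhood of the origin. I decompose $\mathbb{R}^N\setminus\{0\}=\bigcup_{k\in\mathbb{Z}}A_k$ with $A_k:=\{x:2^k<|x|\le 2^{k+1}\}$ and set $u_{A_k}:=\frac{1}{|A_k|}\int_{A_k}u$. Since $u\in\mathcal{C}_c^1(\mathbb{R}^N\setminus\{0\})$, only finitely many $u_{A_k}$ are nonzero; in particular $u_{A_k}=0$ for all sufficiently negative $k$, which legitimizes the telescoping identity $u_{A_k}=\sum_{j<k}\delta_j$ with $\delta_j:=u_{A_{j+1}}-u_{A_j}$. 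This choice of telescoping direction is the only structural change compared with Theorem~\ref{Orlicz-Hardy}.

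The local step is imported from that proof without modification: on each annulus $A_k$, a fractional Poincar\'e-type estimate gives
\begin{equation*}
\int_{A_k}\Phi\!\left(\frac{|u(x)-u_{A_k}|}{|x|^\gamma}\right)dx\le C\int_{A_k}\!\int_{A_k}\Phi\bigl(|x|^{\alpha_1}|y|^{\alpha_2}|D_su(x,y)|\bigr)\,d\mu,
\end{equation*}
with $C$ independent of $k$. Splitting $|u(x)|\le|u(x)-u_{A_k}|+|u_{A_k}|$ and applying the abstract inequality $\Phi(a+b)\le\lambda\Phi(a)+C(\Phi,\Lambda)(\lambda-1)^{1-p_\Phi^+}\Phi(b)$ reduces the problem to estimating $\sum_{k}|A_k|\,\Phi(|u_{A_k}|/2^{k\gamma})$ by the right-hand side of \eqref{Hardy-0}. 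Using the telescoping together with $|u(x)-u(y)|=|x-y|^s|D_su(x,y)|$ and the fact that $|x-y|\asymp 2^j$ and $|x|^{\alpha_1}|y|^{\alpha_2}\asymp 2^{j(\alpha_1+\alpha_2)}$ on $A_j\times A_{j+1}$, one bounds $|A_j|\,\Phi(|\delta_j|/2^{j\gamma})$ by a constant multiple of $\int_{A_j}\!\int_{A_{j+1}}\Phi(|x|^{\alpha_1}|y|^{\alpha_2}|D_su|)\,d\mu$, via Jensen's inequality inside $\Phi$ together with the $\D$ condition.

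The main difficulty lies in the outward summation. A direct application of Jensen's inequality to $\Phi(\sum_{j<k}\delta_j/2^{k\gamma})$ would require $\gamma>N$, which is too restrictive. This is where the hypothesis $\gamma>N/p_\Phi^\ominus$ enters: by definition of $p_\Phi^\ominus$, I can pick $\Psi\asymp\Phi$ with $p_\Psi^->N/\gamma$, for which the homogeneity bound $\Psi(ct)\le c^{p_\Psi^-}\Psi(t)$ holds for every $c\in[0,1]$. Iterating the abstract inequality on the telescoping sum with a parameter $\lambda>1$ yields
\begin{equation*}
\Psi\!\left(\frac{u_{A_k}}{2^{k\gamma}}\right)\le C_\lambda\sum_{j<k}\lambda^{k-1-j}\,2^{-(k-j)\gamma p_\Psi^-}\,\Psi\!\left(\frac{\delta_j}{2^{j\gamma}}\right).
\end{equation*}
Choosing $\lambda\in(1,2^{\gamma p_\Psi^--N})$, a nonempty interval precisely because $\gamma p_\Psi^->N$, the weight $2^{kN}\lambda^{k-1-j}2^{-(k-j)\gamma p_\Psi^-}$ is summable in $k>j$ for each fixed $j$. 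Interchanging the order of summation then collapses the telescoping and controls $\sum_k|A_k|\,\Psi(|u_{A_k}|/2^{k\gamma})$ by $\sum_j|A_j|\,\Psi(|\delta_j|/2^{j\gamma})$, which is in turn bounded by the right-hand side of \eqref{Hardy-0} via the previous paragraph. Returning from $\Psi$ to $\Phi$ through $\Phi\asymp\Psi$ completes the proof.
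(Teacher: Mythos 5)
Your proposal is correct and follows essentially the same route as the paper: dyadic annuli $A_k$, the scaled fractional Orlicz--Poincar\'e estimates on $A_k$ and $A_k\cup A_{k+1}$ (Lemma \ref{Lemma-1-2}), the splitting inequality of Lemma \ref{Lem-H}, and the reduction to an equivalent $\Psi\asymp\Phi$ with $p_\Psi^->N/\gamma$ supplied by the definition of $p^\ominus_\Phi$. The only difference is bookkeeping (plus the harmless imprecision that on $A_j\times A_{j+1}$ only the upper bound $|x-y|\lesssim 2^j$ holds and is needed): you expand the telescoping sum $(u)_{A_k}=\sum_{j<k}\delta_j$, iterate Lemma \ref{Lem-H}, and sum a geometric series under $\lambda\,2^{N-\gamma p_\Psi^-}<1$, whereas the paper runs the one-step recursion between consecutive annuli and absorbs the finite sum $\sum_k 2^{kN}\Psi\bigl(2^{-k\gamma}|(u)_{A_k}|\bigr)$ using the very same smallness condition --- an equivalent argument.
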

\smallskip

Now we consider the limiting cases: $\gamma=N/p^\oplus_\Phi$ and $\gamma=N/p^\ominus_\Phi$. It is known that  for ~$ \Phi(t)=t^p$;  both \eqref{Hardy} and \eqref{Hardy-0} (with $\alpha_1=\alpha_2=0$)  fails to hold for $\gamma=N/p$ (see \cite[Page 578]{Dyda2007}). Indeed, for any Orlicz function $\Phi$, if $p_\Phi^\ominus$ is attained, then \eqref{Hardy} fails to hold for $\gamma=N/p^\ominus_\Phi$ (see Theorem \ref{Orlicz-Hardy}). 
In the case of ~$S(t)=t^p+t^q$, \eqref{Hardy} fails to hold also  for $\gamma=N/p^+_S=N/p_S^\oplus$ (see Remark \ref{rem2}).
  Recall that, even \eqref{p-Hardy} (the classical Hardy inequality) does not hold for $p=N$. In  \cite[Page 49]{Leray1933},  Leray observed that   $\frac{1}{|x|^2(\log(e/|x|))^2}$  is the right Hardy potential to have an analogue of  \eqref{p-Hardy} for $p=N=2$.  More precisely,  
Leray established the following inequality: 
\begin{equation*}
    \int_{B_1(0)}\frac{|u(x)|^2}{|x|^2\log^2(e/|x|)}dx\leq C\int_{B_1(0)}|\nabla u(x)|^2dx,\;\;\;\forall\, u\in \mathcal{C}_c^1(B_1(0)),
\end{equation*}
where $B_1(0)\subset \mathbb{R}^2$ is the ball centred at the origin with radius 1. See also \cite[Theorem 1.1]{Adimurthi2002} for similar inequalities on bounded domains in $\mathbb{R}^N.$ 
For more general Hardy-type potentials in the critical case; see \cite[Theorem 1]{Anoop2012} for bounded domains in $\mathbb{R}^2$ and \cite[Lemma 2.2]{Anoop2015} for exterior domains in $\mathbb{R}^N.$ 

\smallskip
   
The next theorem considers the cases $\gamma=N/p^\oplus_\Phi$ and $\gamma=N/p^\ominus_\Phi$ with logarithmic correction.
\begin{theorem}\label{log-orlicz}
    Let $N\geq 1,\,s\in (0,1)$, $\alpha_1, \alpha_2\in \mathbb{R}$, and let $\gamma:=s-\alpha_1-\alpha_2$.   
For an Orlicz function $\Phi$, 
\begin{enumerate}[(i)]
    \item  if $p^\oplus_\Phi$ is attained  and $\gamma=N/p^\oplus_\Phi$, then for every $u\in \mathcal{C}_c^1(\mathbb{R}^N)$ with $supp(u)\subset B_R(0),$ 
\begin{equation}\label{one}
           \int_{B_R(0)}\frac{\Phi(|x|^{-\gamma}|u(x)|)}{\big(\log(2R/|x|)\big )^{p^\oplus_\Phi}}dx\leq C\int_{\mathbb{R}^N}\int_{\mathbb{R}^N}\Phi\left(|x|^{\alpha_1}|y|^{\alpha_2}|D_su(x,y)|\right)d\mu,
        \end{equation}
        \item if $p^\ominus_\Phi$ is attained at $\Psi\asymp\Phi$ and $\gamma=N/p^\ominus_\Phi$, then for every $u\in \mathcal{C}_c^1(\mathbb{R}^N)$ with $supp(u)\subset B_R(0)^c,$
 \begin{equation}\label{two}
           \int_{B_R(0)^c}\frac{\Phi(|x|^{-\gamma}|u(x)|)}{\big(\log(2|x|/R)\big )^{p^+_\Psi}}dx\leq C\int_{\mathbb{R}^N}\int_{\mathbb{R}^N}\Phi\left(|x|^{\alpha_1}|y|^{\alpha_2}|D_su(x,y)|\right)d\mu,
        \end{equation} 
\end{enumerate}
where $B_R(0)$ is the open ball centred at the origin with radius $R>0$ and $C$ is a positive constant independent of $u$.
\end{theorem}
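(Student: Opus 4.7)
My approach is to prove (i); part (ii) follows by an analogous argument, with dyadic annuli escaping to infinity rather than shrinking to the origin and with the choice of an equivalent Orlicz function attaining $p^{-}_\Psi=p^{\ominus}_\Phi$ (whose growth exponent $p^{+}_\Psi$ then plays the role of $q$ below). Fix $u\in\mathcal{C}_c^1(\mathbb{R}^N)$ with $\mathrm{supp}(u)\subset B_R(0)$. Since $p^{\oplus}_\Phi$ is attained, pick $\Psi\asymp\Phi$ with $p^{+}_\Psi=p^{\oplus}_\Phi=:q$; it suffices to prove \eqref{one} with $\Psi$ in place of $\Phi$, both sides scaling only by constants under $\asymp$. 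Set $r_k:=R\cdot 2^{-k}$, $A_k:=B_{r_k}(0)\setminus\overline{B_{r_{k+1}}(0)}$, and let $\bar u_k$ denote the average of $u$ over $A_k$, for $k\geq 0$; on $A_k$ one has $|x|\asymp r_k$ and $\log(2R/|x|)\asymp k+1$. The inequality from the abstract yields, for a fixed $\lambda\in(1,2]$,
$$
\Psi\!\left(\tfrac{|u|}{|x|^\gamma}\right)\leq\lambda\,\Psi\!\left(\tfrac{|u-\bar u_k|}{|x|^\gamma}\right)+C\,\Psi\!\left(\tfrac{|\bar u_k|}{|x|^\gamma}\right)\quad\text{on }A_k.
$$
The oscillation term is controlled annulus-by-annulus by a fractional Orlicz-Poincar\'e inequality on $A_k$ (obtained by rescaling the sub-critical case of Theorem~\ref{Orlicz-Hardy} on a reference annulus, using that $|x|^{\alpha_1}|y|^{\alpha_2}\asymp r_k^{\alpha_1+\alpha_2}$ is essentially constant on $A_k\times A_k$); after inserting the harmless factor $(k+1)^{-q}\leq 1$ and summing over $k$, this part is absorbed into the right-hand side of \eqref{one} via finite overlap of enlarged annular neighbourhoods.

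The main work is the average term
$$
S:=\sum_{k\geq 0}\frac{1}{(k+1)^{q}}\int_{A_k}\Psi\!\left(\tfrac{|\bar u_k|}{|x|^\gamma}\right)dx,
$$
comparable to $\sum_{k}(k+1)^{-q}\,r_k^N\,\Psi(|\bar u_k|/r_k^\gamma)$. Setting $a_j:=\bar u_j-\bar u_{j-1}$ and telescoping $\bar u_k=\bar u_0+\sum_{j=1}^{k}a_j$ (with $\bar u_0$ bounded by a Poincar\'e inequality on $A_0$, since $u$ vanishes near $\partial B_R$), each $|a_j|$ is controlled by a fractional Orlicz-Poincar\'e inequality on $A_{j-1}\cup A_j$ in terms of a localized piece $I_j$ of the right-hand side of \eqref{one}. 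Two key facts enter: (a) under the criticality $\gamma q=N$, the map $r\mapsto r^N\Psi(t/r^\gamma)$ is non-decreasing in $r$ (because $p^{+}_\Psi=q$ gives $t\varphi(t)\leq q\Psi(t)$), so rescaling from $r_j$ to $r_k$ for $j\leq k$ costs only constants; and (b) the sharp Orlicz homogeneity $\Psi(\mu t)\leq\mu^q\Psi(t)$ for $\mu\geq 1$. Combining (a), (b), Jensen's inequality, and a discrete Orlicz-Hardy inequality of the form
$$
\sum_{k\geq 1}\frac{\Psi\!\left(\sum_{j\leq k}a_j\right)}{(k+1)^{q}}\leq C\sum_{j\geq 1}\Psi(a_j)\qquad(a_j\geq 0,\,q>1),
$$
the Orlicz counterpart of Hardy's classical discrete inequality, reduces $S$ to $C\sum_j r_j^N\,\Psi(|a_j|/r_j^\gamma)\leq C\sum_j I_j$, which is bounded by the right-hand side of \eqref{one} via finite overlap.

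The principal obstacle is the tight bookkeeping of scaling factors: the interplay of dyadic radii, the Orlicz homogeneity exponent $q$, and the logarithmic weight is sharp, and the denominator $(\log(2R/|x|))^{p^{\oplus}_\Phi}$ in \eqref{one}---with no smaller power sufficing---reflects the fact that the homogeneity $\Psi(\mu t)\leq\mu^q\Psi(t)$ uses the sharp exponent $q=p^{+}_\Psi$, available precisely because $p^{\oplus}_\Phi$ is attained. A secondary point is verifying the discrete Orlicz-Hardy inequality in the required form, which follows from standard summation-by-parts techniques when $p^{-}_\Psi>1$. For part (ii), choose $\Psi\asymp\Phi$ attaining $p^{-}_\Psi=p^{\ominus}_\Phi$, decompose $B_R(0)^c$ into dyadic annuli with $r_k:=R\cdot 2^k$, telescope outward using that $\bar u_k\to 0$ as $k\to\infty$ (since $u$ is compactly supported), and run the analogous argument with $p^{+}_\Psi$ in the role of $q$; this yields the weight $(\log(2|x|/R))^{p^{+}_\Psi}$ in \eqref{two}.
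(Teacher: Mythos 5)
Your overall architecture --- dyadic annuli, a rescaled fractional Orlicz--Poincar\'e inequality on each annulus, telescoping of the averages, and the critical relation $\gamma q=N$ producing the logarithmic weight --- matches the paper's. The genuine gap is in the central step, the bound for $S$. Because $\Psi$ is not homogeneous, the dyadic scaling factors do not cancel, and what you actually need is the \emph{weighted} discrete inequality
$$
\sum_{k\ge 0}\frac{r_k^{N}}{(k+1)^{q}}\,\Psi\Bigl(r_k^{-\gamma}\sum_{j\le k}|a_j|\Bigr)\;\le\; C\sum_{j\ge 0} r_j^{N}\,\Psi\bigl(r_j^{-\gamma}|a_j|\bigr),
$$
which does not reduce to the unweighted inequality you quote via (a), (b) and Jensen. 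Concretely: if you first pull out $r_k^{-\gamma}\asymp 2^{k\gamma}$ with (b) (so that $r_k^{N}2^{k\gamma q}=R^N$) and then apply your unweighted discrete Hardy inequality, you are left needing $\Psi(2^{j\gamma}t)\ge c\,2^{j\gamma q}\Psi(t)$, whereas \eqref{H3} only gives the exponent $p^-_\Psi$ in place of $q$; this loses a factor $2^{j\gamma(q-p^-_\Psi)}$ that grows geometrically unless $\Psi\asymp t^{q}$. If instead you use Jensen together with your (correct) monotonicity fact (a), you get $\frac{r_k^{N}}{(k+1)^{q}}\Psi\bigl(r_k^{-\gamma}\sum_{j\le k}|a_j|\bigr)\le\frac{C}{k+1}\sum_{j\le k}r_j^{N}\Psi\bigl(r_j^{-\gamma}|a_j|\bigr)$, and summing over $k$ produces a divergent harmonic tail. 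So neither natural combination closes the argument. A further problem is that your discrete Orlicz--Hardy lemma is asserted only under $p^-_\Psi>1$, which is not implied by the hypotheses: for $\Phi(t)=(1+t)\ln(1+t)-t$ one has $p^\ominus_\Phi=1$, hence \emph{every} $\Psi\asymp\Phi$ has $p^-_\Psi=1$, yet the theorem applies to this $\Phi$.

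The paper closes exactly this step differently: it applies Lemma \ref{Lem-H} with a $k$-dependent parameter $\lambda_k=\bigl(\tfrac{-k}{-k-1/2}\bigr)^{p^+_\Psi-1}$, so that $(\lambda_k-1)^{-1}\asymp -k$, multiplies the resulting recursion by the integrating factor $2^{kN}(-k)^{-(p^+_\Psi-1)}$, and sums by parts; the difference of consecutive weights, $(-k)^{-(p^+_\Psi-1)}-(-k+1/2)^{-(p^+_\Psi-1)}\asymp(-k)^{-p^+_\Psi}$, is precisely what produces the logarithmic weight. This variable-$\lambda$ trick is, in effect, a proof of the weighted discrete inequality displayed above. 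To repair your argument you should either prove that weighted inequality directly (for instance by the same device) or replace your fixed-$\lambda$ splitting by the $k$-dependent one. The remaining issues (obtaining the annular Poincar\'e inequality from Proposition \ref{lem-orlicz} rather than from Theorem \ref{Orlicz-Hardy}, and starting the telescoping one annulus outside $B_R(0)$, where the average of $u$ genuinely vanishes) are minor.
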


\smallskip

For $\Phi(t)=t^p,\,p>1$ and $\alpha_1=\alpha_2=0,$  Edmunds and Triebel established the above theorem in \cite[Theorem 2.8]{Hans1999}  by using interpolation techniques.  In \cite[Theorem 3.1]{Nguyen2018}, using the dyadic decomposition of  $\mathbb{R}^N$, Nguyen and Squassina established the above theorem for the special case $\Phi(t)=t^p$. In fact, they have established a full-range fractional version of C-K-N inequalities.

\begin{remark}
By Theorem \ref{Orlicz-Hardy}, for $\gamma<N/p^\oplus_\Phi$,  \eqref{one} holds with any bounded function in place of $1/\big(\log(2R/|x|)\big)^{p_\Phi^\oplus}$. 
Similarly, by Theorem \ref{Orlicz-Hardy2}, for  $\gamma>N/p^\ominus_\Phi$, \eqref{two} holds with any bounded function in place of $1/\big(\log(2|x|/R)\big)^{p_\Psi^+}$. 
\end{remark}

\begin{remark}  
For any $\Phi \asymp A_p,$ $p_\Phi^\ominus$ and $p_\Phi^\oplus$ are attained at $A_p$. See Example \ref{exm-o} for more examples of Orlicz functions for which these two quantities are attained. We do not know whether these quantities are always attained for an Orlicz function or not. 
If  $p^\oplus_\Phi$ is not attained, then the question `whether \eqref{one} holds for $\gamma=N/p^\oplus_\Phi$ ?' is open. Similarly,  if $p^\ominus_\Phi$ is not attained, then for $\gamma=N/p^\ominus_\Phi$, whether \eqref{two}  holds with some power of $\log(2|x|/R)$ remains an open question.
\end{remark}

\begin{remark} For proving Theorem 3.1 of \cite{Nguyen2018}, Lemma 2.2 of \cite{Nguyen2018}  plays an important role. This lemma states that for $p,\Lambda\in(1,\infty),$ there exists $C=C(p,\Lambda)>0$ so that 
     \begin{equation*}
(a+b)^p\leq \lambda a^p+\frac{C}{(\lambda-1)^{p-1}}b^p,\;\;\;\forall\,a,b\in [0,\infty),\,\forall\,\lambda\in (1,\Lambda). 
  \end{equation*} 
This article proves an analogue of the above inequality for any Orlicz function (see Lemma \ref{Lem-H}). More precisely, for any $\Lambda>1,$ we prove the existence of $C=C(\Phi,\Lambda)>0$ satisfying the following inequality:
     \begin{equation*}
 \Phi(a+b)\leq \lambda\Phi(a)+\frac{C}{(\lambda-1)^{p^+_\Phi-1}}\Phi(b),\;\;\;\forall\,a,b\in [ 0,\infty),\,\forall\,\lambda\in (1,\Lambda].  
  \end{equation*} 
The proof Lemma 2.2 of \cite{Nguyen2018} is based on the homogeneity of $t^p$. On the other hand, we use some subtle properties of the Orlicz function to prove the above inequality.    
\end{remark}

 The rest of this article is organized in the following way: In section 2, we recall some properties of the Orlicz function and prove some vital lemmas used in the subsequent section. We present the proofs for Theorem \ref{Orlicz-Hardy}, Theorem \ref{Orlicz-Hardy2}, and Theorem \ref{log-orlicz} in Section 3. Sections 4 and 5 contain the local analogue of Theorem 1.2-Theorem 1.4 and discuss some important results on weighted fractional Orlicz-Hardy inequalities.
 
\section{preliminaries}
In this section, we recall or prove some essential results we need to establish the main theorems of this article. Throughout this article, we shall use the following notations: 
\begin{itemize}
    \item  $\mathcal{C}_c^1(\Omega)$ denotes the set of continuously differentiable functions with compact support.
    \item For a measurable set $\Omega\subset\mathbb{R}^N$ and $u \in L^1(\mathbb{R}^N)$, $(u)_\Omega$ will denote the average of the function $u$ over $\Omega,$ i.e.,
\begin{equation*}
(u)_\Omega:=\frac{1}{|\Omega|}\int_\Omega u\,dx, 
\end{equation*}
where $|\Omega|$ is the Lebesgue measure of $\Omega.$ 
    \item  For any $f,g:\Omega\subset\mathbb{R}\rightarrow \mathbb{R},$ we
denote $f\asymp g$ if there exist positive constants  $C_1,C_2$ such that $C_1f(x)\leq g(x)\leq C_2f(x)$ for all $x\in \Omega.$ 

\end{itemize}

\subsection{Properties of an Orlicz function}
In the following lemma, we enlist some useful
inequalities involving Orlicz functions. 
\begin{lemma}
    Let $\Phi$ be an Orlicz function, and $\varphi$  be the right derivatives of $\Phi$ as given in \eqref{right-deri}. Then the following hold for every $a, b \geq 0:$
\begin{align}
\Phi(a)&\asymp a\varphi(a),\label{H1}\\   \Phi(a+b)&\leq 2^{p^+_\Phi}\left(\Phi(a)+\Phi(b)\right),\label{H2}\\
     \min\big\{a^{p^-_\Phi},a^{p^+_\Phi}\big\}\Phi(b)&\leq \Phi(ab)\leq \max\big\{a^{p^-_\Phi},a^{p^+_\Phi}\big\}\Phi(b),\label{H3}\\
     \min\big\{a^{1/p^-_\Phi},a^{1/p^+_\Phi}\big\}\Phi^{-1}(b)&\leq  \Phi^{-1}(ab)\leq  \max\big\{a^{1/p^-_\Phi},a^{1/p^+_\Phi}\big\}\Phi^{-1}(b),\label{H4}\\ C_1\min\big\{a^{p^-_\Phi-1},a^{p^+_\Phi-1}\big\}\varphi(b)&\leq  \varphi(ab)\leq C_2\max\big\{a^{p^-_\Phi-1},a^{p^+_\Phi-1}\big\}\varphi(b),\label{H6}
\end{align}
where $C_1$ and $C_2$ are positive constants depending only on $\Phi.$
\begin{proof}
For proofs of \eqref{H1}, \eqref{H2}, \eqref{H3}, and \eqref{H4}; see  \cite[Lemma 2.1]{salort2022} and \cite[Proposition 2.2]{subha}. The inequality \eqref{H6} follows from \eqref{H1} and 
 \eqref{H3}.
\end{proof}
\end{lemma}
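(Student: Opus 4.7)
The plan is to prove the five inequalities in the order (H1), (H3), (H2), (H4), (H6), since (H1) and (H3) are the structural results and the rest are derivable from them. Throughout, the engine will be the identity $\Phi(t)=\int_0^t\varphi(s)\,ds$ together with the two-sided bound $p_\Phi^-\le t\varphi(t)/\Phi(t)\le p_\Phi^+$ that comes straight from the definitions of $p_\Phi^\pm$.

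For (H1), the upper bound $\Phi(a)\le a\varphi(a)$ is immediate from the monotonicity of $\varphi$, since $\int_0^a \varphi(s)\,ds\le \varphi(a)\int_0^a ds$. The reverse bound $a\varphi(a)\le p_\Phi^+\Phi(a)$ follows at once from the supremum definition of $p_\Phi^+$; this already produces the equivalence $\Phi(a)\asymp a\varphi(a)$ with constants depending only on $\Phi$. For (H3), the key trick is to differentiate $\log\Phi(ab)$ in $a$ (for fixed $b>0$), obtaining
\[
\frac{d}{da}\log\Phi(ab)=\frac{b\varphi(ab)}{\Phi(ab)}=\frac{1}{a}\cdot\frac{(ab)\varphi(ab)}{\Phi(ab)}\in\left[\frac{p_\Phi^-}{a},\frac{p_\Phi^+}{a}\right].
\]
Integrating from $1$ to $a$ gives $p_\Phi^-\log a\le \log(\Phi(ab)/\Phi(b))\le p_\Phi^+\log a$ when $a\ge 1$, and the inequalities flip when $a<1$; consolidating the two regimes into a single expression produces the stated $\min$–$\max$ form. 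I expect the bookkeeping for $a<1$ versus $a\ge 1$ to be the only delicate point, and it is avoided entirely by writing the bounds with $\min$ and $\max$ from the outset.

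The remaining inequalities are corollaries. For (H2), apply (H3) with the scalar $2$ to obtain $\Phi(2c)\le 2^{p_\Phi^+}\Phi(c)$, then estimate $\Phi(a+b)\le \Phi(2\max\{a,b\})\le 2^{p_\Phi^+}\Phi(\max\{a,b\})\le 2^{p_\Phi^+}(\Phi(a)+\Phi(b))$. For (H4), substitute $b\mapsto\Phi^{-1}(b)$ in (H3): the relation $\Phi(a\Phi^{-1}(b))\in[\min\{a^{p_\Phi^-},a^{p_\Phi^+}\}\,b,\,\max\{a^{p_\Phi^-},a^{p_\Phi^+}\}\,b]$ becomes, after applying the increasing function $\Phi^{-1}$ and a change of variable $a\mapsto a^{1/p_\Phi^\pm}$ in the two regimes, the asserted bound on $\Phi^{-1}(ab)$.

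Finally, (H6) is just the composition of (H1) and (H3): from (H1),
\[
\varphi(ab)\asymp\frac{\Phi(ab)}{ab},\qquad \varphi(b)\asymp\frac{\Phi(b)}{b},
\]
so $\varphi(ab)/\varphi(b)\asymp a^{-1}\Phi(ab)/\Phi(b)$, and (H3) bounds the last ratio between $\min\{a^{p_\Phi^-},a^{p_\Phi^+}\}$ and $\max\{a^{p_\Phi^-},a^{p_\Phi^+}\}$; multiplying through by $a^{-1}$ converts the exponents $p_\Phi^\pm$ to $p_\Phi^\pm-1$ and yields (H6) with constants $C_1,C_2$ absorbing those from (H1). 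The only real work is (H3); everything else is a routine consequence. Since (H1)–(H4) are also available in the cited references \cite{salort2022,subha}, the write-up can simply record them with a citation and present the short derivation of (H6) in detail.
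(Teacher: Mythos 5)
Your proposal is correct and follows essentially the same route as the paper: the paper simply cites \cite{salort2022,subha} for \eqref{H1}--\eqref{H4} (your arguments are the standard ones given there) and derives \eqref{H6} from \eqref{H1} and \eqref{H3} exactly as you do. The only point worth making explicit in a write-up is that $\varphi$ is merely the right derivative of the convex function $\Phi$, so the identity $\frac{d}{da}\log\Phi(ab)=b\varphi(ab)/\Phi(ab)$ holds a.e.; since $a\mapsto\log\Phi(ab)$ is locally Lipschitz on $(0,\infty)$, the integration step in your proof of \eqref{H3} remains valid.
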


Next, we have the following lemma for two equivalent Orlicz functions.
    \begin{lemma}\label{lemm-equivalent}
        Let $\Phi$ and $\Psi$ be two equivalent Orlicz functions ($\Phi\asymp\Psi$). Then 
        $$p_\Phi^- \le p_\Psi^+, \; \text{ and }\; p_\Psi^- \le p_\Phi^+. $$
    \end{lemma}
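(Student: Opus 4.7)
The plan is to derive both inequalities from the scaling estimate \eqref{H3} combined with the two-sided bound $C_1\Phi(t)\le\Psi(t)\le C_2\Phi(t)$. The key observation is that \eqref{H3} encodes the following monotonicity of power-like behavior: for any Orlicz function $F$ with right derivative $f$, applying \eqref{H3} at $b=1$ and $a\ge 1$ yields
\[
F(a)\ge a^{p_F^-}F(1)\qquad\text{and}\qquad F(a)\le a^{p_F^+}F(1),
\]
i.e.\ $F(t)$ is squeezed between $c_1 t^{p_F^-}$ and $c_2 t^{p_F^+}$ for large $t$. Equivalence of two Orlicz functions should therefore force the lower exponent of one to not exceed the upper exponent of the other; otherwise the asymptotics would be incompatible.

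To prove $p_\Phi^-\le p_\Psi^+$, I would argue by contradiction. Suppose $p_\Phi^->p_\Psi^+$. Using \eqref{H3} applied to $\Phi$ with $b=1$, $a\ge 1$, I get $\Phi(a)\ge a^{p_\Phi^-}\Phi(1)$. Applying \eqref{H3} to $\Psi$ with $b=1$, $a\ge 1$, I get $\Psi(a)\le a^{p_\Psi^+}\Psi(1)$. Combining with the equivalence $C_1\Phi(a)\le\Psi(a)$ gives
\[
C_1\Phi(1)\, a^{p_\Phi^-}\le C_1\Phi(a)\le\Psi(a)\le\Psi(1)\, a^{p_\Psi^+}
\quad\text{for all }a\ge 1,
\]
which rearranges to
\[
a^{p_\Phi^--p_\Psi^+}\le \frac{\Psi(1)}{C_1\Phi(1)}\quad\text{for all }a\ge 1.
\]
Letting $a\to\infty$ contradicts $p_\Phi^--p_\Psi^+>0$, establishing $p_\Phi^-\le p_\Psi^+$.

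For the second inequality $p_\Psi^-\le p_\Phi^+$, I would invoke symmetry: the relation $\Phi\asymp\Psi$ is symmetric in $\Phi$ and $\Psi$, so swapping their roles in the argument above yields the claim immediately.

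I do not expect a genuine obstacle: the only delicate point is noting that \eqref{H3} is applied separately to each Orlicz function with its \emph{own} exponents $p_F^\pm$ (so there is no circularity), and that the constants $\Phi(1)$, $\Psi(1)$ are strictly positive (which follows from continuity of $\Phi,\Psi$ together with $\Phi(t),\Psi(t)>0$ for $t>0$, guaranteed by the representation \eqref{right-deri} with $\varphi(t)>0$ for $t>0$). One could equivalently run the argument at small scales by using \eqref{H3} for $a\le 1$, but the large-$a$ regime above already suffices.
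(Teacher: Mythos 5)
Your proposal is correct and follows essentially the same route as the paper: both apply \eqref{H3} at the point $1$ to get $\Phi(a)\ge a^{p_\Phi^-}\Phi(1)$ and $\Psi(a)\le a^{p_\Psi^+}\Psi(1)$ for $a\ge 1$, combine these with the equivalence to bound $a^{p_\Phi^--p_\Psi^+}$ uniformly for large $a$, and conclude $p_\Phi^-\le p_\Psi^+$, with the second inequality obtained by swapping the roles of $\Phi$ and $\Psi$. The only cosmetic difference is that you phrase the final step as a contradiction while the paper reads off the exponent comparison directly from the bounded quotient.
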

    \begin{proof}
     Since $\Phi\asymp\Psi$, there exists positive constant $C$ such that
     \begin{equation*}
         \frac{\Phi(t)}{\Psi(t)} \le C, \;\;\; \forall\; t> 0.
     \end{equation*}
                By  \eqref{H3}, we have
                $$t^{p^-_\Phi}\Phi(1)\le \Phi(t) \text{ and }  \Psi(t) \le t^{p^+_\Psi}\Psi(1),\;\; \forall\, t>1.$$
         From  the above inequalities, we obtain
         $$C\ge \frac{\Phi(t)}{\Psi(t)}\ge \frac{\Phi(1)t^{p^-_\Phi}}{\Psi(1)t^{p^+_\Psi}},\;\;\; \forall\, t>1.$$
         Therefore, we must have  $p_\Phi^- \le p_\Psi^+.$ By interchanging the roles of $\Phi$ and $\Psi$ we get the second inequality.
  \end{proof}

\subsection{Some function spaces:}  Let $\Omega$ be an open set in $\mathbb{R}^N,$ and $\Phi$ be an Orlicz function. 
\begin{enumerate}[(i)]
\item  \textbf{\underline{Orlicz spaces}:}
The  Orlicz space associated with $\Phi$ is  defined as 
$$ L^{\Phi}(\Omega)=\left\{u:\Omega\rightarrow \mathbb{R}\; \text{measurable}\; :\int_{\Omega}\Phi\left(|u(x)|\right)dx<\infty \right\}.$$ 
 The space $L^{\Phi}(\Omega)$  is a Banach space with respect to the following \textit{Luxemburg norm:}
\begin{equation*}
   \|u\|_{L^{\Phi}(\Omega)}=\inf\left\{ \lambda>0:\int_{\Omega}\Phi\left( \frac{|u(x)|}{\lambda}\right) dx\leq 1\right\}.
\end{equation*}

    \item  \textbf{\underline{Orlicz-Sobolev spaces}:} The Orlicz-Sobolev space is defined as
$$ W^{1,\Phi}(\Omega)=\left\{u\in L^\Phi(\Omega) :\;\;|\nabla u|\in L^\Phi(\Omega) \right\},$$
where the partial derivatives are understood in the distributional sense. The space $ W^{1,\Phi}(\Omega)$ is a Banach space with the norm $\|u\|_{W^{1,\Phi}(\Omega)}:=\|u\|_{L^\Phi(\Omega)}+\|\nabla u\|_{L^\Phi(\Omega)}$.
\smallskip

\item  \textbf{\underline{Frcational Orlicz-Sobolev spaces}:} Let $s\in (0,1)$. The fractional Orlicz-Sobolev space is defined as
$$ W^{s,\Phi}(\Omega)=\left\{u\in L^\Phi(\Omega) :\; I_{\Phi,\Omega}(u)<\infty  \right\}, \;\;\; I_{\Phi,\Omega}(u):=\int_{\Omega}\int_{\Omega}\Phi\left( |D_su(x,y)|\right)d\mu .$$
The space $ W^{s,\Phi}(\Omega)$ is a Banach space with the norm $\|u\|_{W^{s,\Phi}(\Omega)}:=\|u\|_{L^\Phi(\Omega)}+[u]_{W^{s,\Phi}(\Omega)}$, where
\begin{equation*}
    [u]_{W^{s,\Phi}(\Omega)}=\inf\left\{ \lambda>0: I_{\Phi,\Omega}\left(\frac{u}{\lambda}\right)\leq 1\right\}.
\end{equation*}
\end{enumerate}
For the details on Orlicz-Sobolev and fractional Orlicz-Sobolev spaces; see \cite{Adams1975,bonder2019, Krasn1961}.
For other related works on fractional Orlicz-Sobolev
spaces, we refer to \cite{Bahrouni2020,Rita-Peter2020,salort-jde}.
\\

Next, for a given bounded domain $\Omega$ and $\lambda>0$, we establish the {\it{fractional Poincar\'e-Wirtinger inequality}} for an Orlicz function on $\Omega_\lambda:=\{\lambda x:x\in \Omega\}$ with a   
constant which is independent of $\lambda$.
\smallskip

\begin{prop}(\textbf{Fractional Orlicz-Poincar\'e-Wirtinger inequality}.)\label{lem-orlicz}
 Let $\Omega$ be a bounded open subset of $\mathbb{R}^N,\,N\ge 1$, and   $s\in (0,1),\,\lambda>0$. Then for any Orlicz function $\Phi,$ there exists a positive constant $C=C(s,N,\Omega,\Phi)$  such that
\begin{equation*}
  \int_{\Omega_\lambda}\Phi(|u(x)-(u)_{\Omega_\lambda}|)dx\leq C\int_{\Omega_\lambda}\int_{\Omega_\lambda}\Phi\left(\lambda^s|D_su(x,y)|\right)d\mu,\;\;\;\forall\,u\in W^{s,\Phi}(\Omega_\lambda).
\end{equation*}
\end{prop}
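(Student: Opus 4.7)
The plan is to reduce to the case $\lambda=1$ by a scaling argument and prove the $\lambda=1$ inequality directly with Jensen's inequality and property \eqref{H3}. The key point is that the power $\lambda^s$ placed inside $\Phi$ on the right-hand side is exactly what the scale-covariance of the Hölder quotient $D_s u$ dictates, which is what allows the constant to be taken independent of $\lambda$.

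First I would establish the case $\lambda=1$ on $\Omega$ itself. Writing
$$u(x)-(u)_\Omega=\frac{1}{|\Omega|}\int_\Omega(u(x)-u(y))\,dy,$$
the triangle inequality, the monotonicity of $\Phi$ on $[0,\infty)$, and Jensen's inequality applied to the probability measure $dy/|\Omega|$ give
$$\Phi(|u(x)-(u)_\Omega|)\le\frac{1}{|\Omega|}\int_\Omega\Phi(|u(x)-u(y)|)\,dy.$$
Next, using $|u(x)-u(y)|=|x-y|^s|D_su(x,y)|$ together with property \eqref{H3} and the uniform bound $|x-y|\le d:=\operatorname{diam}(\Omega)$, I get $\Phi(|u(x)-u(y)|)\le C_1(\Omega,\Phi)\,\Phi(|D_su(x,y)|)$. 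Finally, converting $dx\,dy$ to the measure $d\mu$ via $dx\,dy=|x-y|^N d\mu\le d^N d\mu$ yields
$$\int_\Omega\Phi(|u(x)-(u)_\Omega|)\,dx\le C_0\int_\Omega\int_\Omega\Phi(|D_su(x,y)|)\,d\mu,$$
with $C_0=C_0(s,N,\Omega,\Phi)$.

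Second, I would scale. Given $u\in W^{s,\Phi}(\Omega_\lambda)$, set $v(x):=u(\lambda x)$ for $x\in\Omega$. A direct change of variables shows $(v)_\Omega=(u)_{\Omega_\lambda}$, and from $|x-y|^s=|\lambda x-\lambda y|^s/\lambda^s$ one has
$$|D_sv(x,y)|=\lambda^s|D_su(\lambda x,\lambda y)|.$$
Applying the $\lambda=1$ inequality to $v$ on $\Omega$ and substituting $z=\lambda x$, $w=\lambda y$, both sides acquire the identical factor $\lambda^{-N}$ (from $dx=\lambda^{-N}dz$ on the left, and from $dx\,dy/|x-y|^N=\lambda^{-N}dz\,dw/|z-w|^N$ on the right). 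These factors cancel, and the $\lambda^s$ remains inside $\Phi$ on the right-hand side, giving precisely the claimed inequality with the same constant $C_0$.

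The only point to be careful about is the claim that $C$ can be chosen independent of $\lambda$. A naive attempt to apply the $\lambda=1$ argument directly to $\Omega_\lambda$ would fail, since $\operatorname{diam}(\Omega_\lambda)=\lambda\,d$ blows up and the bound obtained from \eqref{H3} would inherit a factor like $\max(\lambda^{sp_\Phi^-},\lambda^{sp_\Phi^+})$. The resolution is that the statement absorbs exactly this growth by placing $\lambda^s$ inside $\Phi$ on the right, so the scaling argument above bypasses \eqref{H3} altogether on the $\Omega_\lambda$ side and uses it only on the fixed reference domain $\Omega$. This is where the scale-covariant formulation of the theorem makes the proof clean.
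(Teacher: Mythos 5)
Your proof is correct and rests on exactly the same ingredients as the paper's: Jensen's inequality with the probability measure $dy/|\Omega|$, the bound $|x-y|\le \operatorname{diam}$, property \eqref{H3} applied only to the $\lambda$-independent factor $d^s$, and the conversion $dx\,dy\le d^N\,d\mu$. The only difference is presentational — you prove the case $\lambda=1$ and transfer it to $\Omega_\lambda$ by the change of variables $v(x)=u(\lambda x)$, whereas the paper runs the identical computation directly on $\Omega_\lambda$ while keeping $\lambda^s$ inside $\Phi$; both yield the same $\lambda$-independent constant.
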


\begin{proof}
Let $\text{d}=\text{diam} (\Omega)$ and $u\in W^{s,\Phi}(\Omega_\lambda)$. Then $\text{diam}(\Omega_\lambda)=\lambda\text{d}.$ Thus, by Jenson's inequality, 
  we obtain
    \begin{align*}
        \int_{\Omega_\lambda}\Phi(|u(x)-(u)_{\Omega_\lambda}|)dx&=\int_{\Omega_\lambda}\Phi\left(\left|\frac{1}{|\Omega_\lambda|}\int_{\Omega_\lambda}(u(x)-u(y))dy\right|\right)dx\\&\leq \frac{1}{|\Omega_\lambda|}\int_{\Omega_\lambda}\int_{\Omega_\lambda}\Phi(|u(x)-u(y)|)dydx\\&\leq \frac{1}{|\Omega_\lambda|}\int_{\Omega_\lambda}\int_{\Omega_\lambda}\Phi\left(\lambda^s\text{d}^s\frac{|u(x)-u(y)|}{|x-y|^s}\right)\frac{(\lambda\text{d})^N}{|x-y|^N}dxdy\\&= \frac{(\lambda\text{d})^N}{\lambda^N|\Omega|}\int_{\Omega_\lambda}\int_{\Omega_\lambda}\Phi\left(\lambda^s\text{d}^s\frac{|u(x)-u(y)|}{|x-y|^s}\right)d\mu\\&\leq \frac{\text{d}^N}{|\Omega|}\max\left\{\text{d}^{sp^-_\Phi},\text{d}^{sp^+_\Phi}\right\}\int_{\Omega_\lambda}\int_{\Omega_\lambda}\Phi\left(\lambda^s|D_su(x,y)|\right)d\mu,
    \end{align*}
   where the last inequality follows from \eqref{H3}. This completes the proof.
\end{proof}
\smallskip

In the following  lemma, we prove two main inequalities that we required in the proof of the main theorems. For $R>0$ and $k\in \mathbb{Z},$ define $$A_k(R)=\{x\in \mathbb{R}^N:2^kR\le |x|< 2^{k+1}R\}.$$
 
\begin{lemma}\label{Lemma-1-2} Let $N\geq 1,s\in (0,1),\,R>0,\,\alpha_1,\alpha_2\in \mathbb{R},$ and $\gamma=s-\alpha_1-\alpha_2$.  Then for any Orlicz function $\Phi$, there exists a positive constant  $C=C(s,\alpha_1,\alpha_2,N,R,\Phi)$ so that for every $ u\in \mathcal{C}^1_c(\mathbb{R}^N)$ and $k\in \mathbb{Z}$, the following inequalities hold:

\begin{equation*}
     (i)\;\quad \int\limits_{A_k(R)}  \Phi\left(\frac{|u(x)|}{|x|^\gamma}\right)dx \leq C2^{kN}\Phi\left(2^{-k\gamma}\big|(u)_{A_k(R)}\big|\right)\\+ C \int\limits_{A_k(R)}\int\limits_{A_k(R)}\Phi\left(|x|^{\alpha_1}|y|^{\alpha_2}|D_su|\right)d\mu,
\end{equation*}
\begin{equation*}
   (ii)\; \Phi\left(2^{-k\gamma}\left|(u)_{A_k(R)}- (u)_{A_{k+1}(R)}\right|\right)\leq \frac{C}{2^{kN}}\int\limits_{A_k(R)\cup A_{k+1}(R)}\int\limits_{A_k(R)\cup A_{k+1}(R)}\Phi\left(|x|^{\alpha_1}|y|^{\alpha_2}|D_su|\right)d\mu.
\end{equation*}
\end{lemma}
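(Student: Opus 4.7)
My plan is to derive both (i) and (ii) by reducing to the annular Poincaré--Wirtinger inequality (Proposition \ref{lem-orlicz}) after exploiting the fact that $|x|$ is essentially constant on $A_k(R)$. The central observation I will use repeatedly is that for $x,y \in A_k(R)\cup A_{k+1}(R)$ one has $|x|\asymp |y|\asymp 2^{k}R$, hence $|x|^{\alpha_1}|y|^{\alpha_2}\asymp (2^{k}R)^{\alpha_1+\alpha_2}$. Combined with the identity $(2^k R)^s\,2^{-k\gamma}=R^{\gamma}(2^k R)^{\alpha_1+\alpha_2}$ (which uses $s=\gamma+\alpha_1+\alpha_2$) and \eqref{H3}, this gives
$$\Phi\big((2^{k}R)^{s}\,2^{-k\gamma}\,|D_s u(x,y)|\big)\le C(\Phi,R,\gamma,s)\,\Phi\big(|x|^{\alpha_1}|y|^{\alpha_2}|D_s u(x,y)|\big),$$
which is the pivot between the Poincaré bound and the weighted right-hand side of the lemma.

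For (i), I would split $u(x)=(u(x)-(u)_{A_k(R)})+(u)_{A_k(R)}$, apply \eqref{H2} to $\Phi(|x|^{-\gamma}|u(x)|)$, and integrate over $A_k(R)$. In the constant piece, \eqref{H3} together with $|x|^{-\gamma}\asymp 2^{-k\gamma}$ converts $\int_{A_k(R)}\Phi(|x|^{-\gamma}|(u)_{A_k(R)}|)\,dx$ into $C\,2^{kN}\Phi(2^{-k\gamma}|(u)_{A_k(R)}|)$, using $|A_k(R)|\asymp 2^{kN}$. For the remaining piece, I again replace $|x|^{-\gamma}$ by $2^{-k\gamma}$ up to \eqref{H3}, then apply Proposition \ref{lem-orlicz} with base set $\Omega=A_0(1)$ and scale $\lambda=2^{k}R$ to the rescaled function $v:=2^{-k\gamma}u$ (for which $(v)_{A_k(R)}=2^{-k\gamma}(u)_{A_k(R)}$ and $|D_s v|=2^{-k\gamma}|D_s u|$). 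This yields the bound by a double integral of $\Phi((2^{k}R)^{s}\,2^{-k\gamma}|D_s u|)$, which the central observation converts into the desired weighted right-hand side.

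For (ii), I would begin from the identity
$$(u)_{A_k(R)}-(u)_{A_{k+1}(R)}=\frac{1}{|A_k(R)||A_{k+1}(R)|}\int_{A_k(R)}\int_{A_{k+1}(R)}(u(x)-u(y))\,dx\,dy,$$
multiply through by $2^{-k\gamma}$, and apply Jensen's inequality to the convex function $\Phi$, giving
$$\Phi\big(2^{-k\gamma}|(u)_{A_k(R)}-(u)_{A_{k+1}(R)}|\big)\le \frac{1}{|A_k(R)||A_{k+1}(R)|}\int_{A_k(R)}\int_{A_{k+1}(R)}\Phi(2^{-k\gamma}|u(x)-u(y)|)\,dx\,dy.$$
Using $|u(x)-u(y)|=|x-y|^{s}|D_s u(x,y)|$ and $|x-y|\le C\,2^{k}R$ on $A_k(R)\times A_{k+1}(R)$, monotonicity of $\Phi$ gives $\Phi(2^{-k\gamma}|u(x)-u(y)|)\le \Phi(C\,2^{-k\gamma}(2^{k}R)^{s}|D_s u|)$, and the central observation bounds this by $C\Phi(|x|^{\alpha_1}|y|^{\alpha_2}|D_s u|)$. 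Finally, I use $|A_k(R)||A_{k+1}(R)|\asymp 2^{2kN}$ and rewrite $dx\,dy=|x-y|^{N}\,d\mu\le C\,2^{kN}\,d\mu$ to produce the prefactor $C/2^{kN}$, and enlarge the integration domain from $A_k(R)\times A_{k+1}(R)$ to $(A_k(R)\cup A_{k+1}(R))^{2}$.

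The only real obstacle is careful bookkeeping: tracking how powers of $2^{k}$ and $R$ combine with the equivalences $|x|\asymp 2^{k}R$, and invoking \eqref{H3} at each step to pull multiplicative constants outside $\Phi$. No deep ingredient is needed beyond Proposition \ref{lem-orlicz} and the pointwise Orlicz inequalities \eqref{H2}--\eqref{H3}.
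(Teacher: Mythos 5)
Your proof of part (i) is essentially the paper's argument: the paper applies Proposition \ref{lem-orlicz} with $\Omega=\{R<|x|<2R\}$ and $\lambda=2^k$ (your choice $\Omega=A_0(1)$, $\lambda=2^kR$ is the same inequality), splits off the average via \eqref{H2}, and then uses \eqref{H3} with $|x|\asymp 2^kR$ and $s-\gamma=\alpha_1+\alpha_2$ to trade $2^{k(s-\gamma)}$ for $|x|^{\alpha_1}|y|^{\alpha_2}$ — exactly your ``central observation''. For part (ii), however, you take a genuinely different and somewhat more elementary route. The paper introduces the intermediate average $z=(u)_{A_k\cup A_{k+1}}$, applies \eqref{H2} and Jensen to reduce to $\int_{A_k\cup A_{k+1}}\Phi(|u-z|)\,dx$, and then invokes Proposition \ref{lem-orlicz} once more on the doubled annulus $\{R<|x|<4R\}$ scaled by $2^k$. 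You instead start from the double-average identity
\begin{equation*}
(u)_{A_k(R)}-(u)_{A_{k+1}(R)}=\frac{1}{|A_k(R)|\,|A_{k+1}(R)|}\int_{A_k(R)}\int_{A_{k+1}(R)}\big(u(x)-u(y)\big)\,dx\,dy,
\end{equation*}
apply Jensen directly, and then use only the pointwise bounds $|u(x)-u(y)|=|x-y|^s|D_su(x,y)|$, $|x-y|\le C\,2^kR$, $|x|^{\alpha_1}|y|^{\alpha_2}\asymp(2^kR)^{\alpha_1+\alpha_2}$, together with $dx\,dy=|x-y|^N d\mu\le C2^{kN}d\mu$ and $|A_k||A_{k+1}|\asymp 2^{2kN}$; all of these steps are correct and produce the required prefactor $C/2^{kN}$ with $C$ independent of $k$ and $u$. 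What your version buys is that (ii) needs neither \eqref{H2} nor a Poincar\'e--Wirtinger inequality on the two-annulus set (connectedness of $A_k\cup A_{k+1}$ plays no role), at the cost of a slightly less uniform presentation; the paper's version reuses Proposition \ref{lem-orlicz} so that both parts run through the same machine. Either way the constant has the stated dependence $C=C(s,\alpha_1,\alpha_2,N,R,\Phi)$, so your proposal is a valid proof of the lemma.
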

 \begin{proof}   
Let $ u\in \mathcal{C}^1_c(\mathbb{R}^N)$ and $k\in\mathbb{Z}$. We denote $A_k(R)$ by $A_k$ for simplicity.

\noindent$(i)$
Applying Proposition \ref{lem-orlicz} with $\Omega=\{x\in \mathbb{R}^N:R<|x|<2R\}$, $\lambda=2^k$ and observing that $(u)_{\Omega_\lambda}=(u)_{A_k}$, we obtain
\begin{equation*}
    \int_{A_k}\Phi(|u(x)-(u)_{A_k}|)dx\leq  C \int_{A_k}\int_{A_k}\Phi\left(2^{ks}|D_su(x,y)|\right)d\mu,
\end{equation*}
where $C=C(s,N,R,\Phi)$ is a positive constant. Thus, by \eqref{H2} we obtain 
\begin{align*}
  \int_{A_k}\Phi(|u(x)|)dx= &\int_{A_k}\Phi(|(u)_{A_k}+u(x)-(u)_{A_k}|)dx\\&\leq  2^{p^+_\Phi}\int_{A_k}\Phi(|(u)_{A_k}|)dx+2^{p^+_\Phi}\int_{A_k}\Phi(|u(x)-(u)_{A_k}|)dx\\& \leq 2^{p^+_\Phi}|A_k|\Phi\left(|(u)_{A_k}|\right)+ 2^{p^+_\Phi} C\int_{A_k}\int_{A_k}\Phi\left(2^{ks}|D_su(x,y)|\right)d\mu.
\end{align*}
Now replace $u$ by $2^{-k\gamma}u$ in the above inequality to obtain
\begin{multline}\label{eqn-lem2}
     \int_{A_k}\Phi\left(2^{-k\gamma}|u(x)|\right)dx\leq 2^{p^+_\Phi}|A_k|\Phi\left(2^{-k\gamma }|(u)_{A_k}|\right)\\+ 2^{p^+_\Phi}C\int_{A_k}\int_{A_k}\Phi\left(2^{k(s-\gamma)}|D_su(x,y)|\right)d\mu.
\end{multline}
If  $\gamma>0$, then by \eqref{H3} for every $ x \in A_k,$ we get $$\Phi\left(\frac{|u(x)|}{|x|^{\gamma}}\right)\leq \Phi\left(\frac{|u(x)|}{2^{k\gamma}R^{\gamma}}\right)
\leq  \max\left\{R^{-\gamma p^-_\Phi},R^{-\gamma p^+_\Phi}\right\}\Phi(2^{-k\gamma}|u(x)|).$$ 
On the other hand, if $\gamma\leq  0,$ then again by  \eqref{H3} for every $x \in A_k$, we get 
$$\Phi\left(\frac{|u(x)|}{|x|^{\gamma}}\right)< \Phi\left(\frac{|u(x)|}{2^{(k+1)\gamma}R^{\gamma}}\right)\leq \max\left\{(2R)^{-\gamma p^-_\Phi},(2R)^{-\gamma p^+_\Phi}\right\}\Phi(2^{-k\gamma}|u(x)|).$$ 
 Consequently, for all $\gamma\in \mathbb{R}$ we get a constant $C_1=C_1(\gamma,R,\Phi)>0$ such that
\begin{equation}\label{eqn-dom1}
    \Phi\left(\frac{|u(x)|}{|x|^{\gamma}}\right)\leq C_1\Phi(2^{-k\gamma}|u(x)|),\hspace{3mm} \forall\, x \in A_k.
\end{equation}
Similarly, by  \eqref{H3}, there exists  a constant $C_2=C_2(\alpha_1,\alpha_2,R,\Phi)>0$ such that for all $x, ~y\in A_k,$
\begin{equation}\label{eqn-dom}
    \Phi\left(2^{k(s-\gamma)}|D_su(x,y)|\right)= \Phi\left(2^{k(\alpha_1+\alpha_2)}|D_su(x,y)|\right)\leq C_2\Phi\left(|x|^{\alpha_1}|y|^{\alpha_2}|D_su(x,y)|\right).
\end{equation}
Hence, the result follows from \eqref{eqn-lem2}, \eqref{eqn-dom1}, and \eqref{eqn-dom}. 
\smallskip

\noindent $(ii)$
Let $z=(u)_{A_k\cup A_{k+1}}$. Then, by  \eqref{H2} and the Jenson's inequality, 
we get
\begin{align*}
     \Phi\left(|(u)_{A_k}-(u)_{A_{k+1}} |\right)=&\Phi\left(|(u)_{A_k}-z-(u)_{A_{k+1}}+z |\right)\\\le& 2^{p^+_\Phi}\Phi\left(|(u)_{A_k}-z|\right)+2^{p^+_\Phi}\Phi\left(|(u)_{A_{k+1}}-z|\right)\\\le&2^{p^+_\Phi}\Phi\left( \frac{1}{|A_k|}\int_{A_{k}}|u(x)-z|dx\right)+2^{p^+_\Phi}\Phi\left(\frac{1}{|A_{k+1}|}\int_{ A_{k+1}}|u(x)-z|dx\right)\nonumber\\\leq & \frac{2^{p^+_\Phi}}{|A_k|}\int_{A_k}\Phi(|u(x)-z|)dx+\frac{2^{p^+_\Phi}}{|A_{k+1}|}\int_{ A_{k+1}}\Phi(|u(x)-z|)dx\nonumber\\\leq &  \frac{2^{p^+_\Phi+1}}{|A_k|}\int_{A_k\cup A_{k+1}}\Phi(|u(x)-z|)dx.
\end{align*}
 Applying Proposition \ref{lem-orlicz} with $\Omega=\{x\in \mathbb{R}^N:R<|x|<4R\}$ and $\lambda=2^k$, we get
\begin{equation*}
    \int_{A_k\cup A_{k+1}}\Phi(|u(x)-(u)_{A_k\cup A_{k+1}}|)dx\leq C \int_{A_k\cup A_{k+1}}\int_{A_k\cup A_{k+1}}\Phi\left(2^{ks}|D_su(x,y)|\right)d\mu,
\end{equation*}
where $C=C(s,N,R,\Phi)$ is a positive constant. 
By combining the above two inequalities, we obtain 
\begin{equation*}
     \Phi\left(|(u)_{A_k}-(u)_{A_{k+1}} |\right)\leq \frac{2^{p^+_\Phi+1} C}{|A_k|}\int_{A_k\cup A_{k+1}}\int_{A_k\cup A_{k+1}}\Phi\left(2^{ks}|D_su(x,y)|\right)d\mu.
\end{equation*}
Now replace $u$ by $2^{-k\gamma}u$ to get
\begin{equation*}
    \Phi\left(2^{-k\gamma}|(u)_{A_k}-(u)_{A_{k+1}} |\right)\leq  \frac{2^{p^+_\Phi+1} C}{|A_k|}\int_{A_k\cup A_{k+1}}\int_{A_k\cup A_{k+1}}\Phi\left(2^{k(s-\gamma)}|D_su(x,y)|\right)d\mu.
\end{equation*}
Further, using  $\gamma=s-\alpha_1-\alpha_2$ and  \eqref{H3}, we get a constant $C_1=C_1(\alpha_1,\alpha_2,R,\Phi)>0$ (see \eqref{eqn-dom}) such that 
$$ \Phi\left(2^{k(s-\gamma)}|D_su(x,y)|\right)\leq C_1\Phi\left(|x|^{\alpha_1}|y|^{\alpha_2}|D_su(x,y)|\right),\;\;\;\forall\,x,y\in A_k\cup A_{k+1}.$$
 Hence, the conclusion follows from the above two inequalities.
\end{proof}

\section{Weighted fractional Orlicz-Hardy inequalities}
In this section, we prove Theorem \ref{Orlicz-Hardy}, Theorem \ref{Orlicz-Hardy2}, and Theorem \ref{log-orlicz}. First, we establish a lemma that plays an important role in proving the aforementioned theorems. 

\begin{lemma}\label{Lem-H}
    Let $\Phi$ be an Orlicz function and $\Lambda>1$.
Then there exists  $C=C(\Phi,\Lambda)>0$ such that 
     \begin{equation*}
 \Phi(a+b)\leq \lambda\Phi(a)+\frac{C}{(\lambda-1)^{p^+_\Phi-1}}\Phi(b),\;\;\;\forall\,a,b\in [0,\infty),\,\forall\,\lambda\in (1,\Lambda]. 
  \end{equation*}
  \end{lemma}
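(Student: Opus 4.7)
The plan is to mimic the proof of Lemma 2.2 in \cite{Nguyen2018}, replacing the homogeneity of $t^p$ with a careful application of \eqref{H3}. The basic idea is to decompose $a+b$ as a convex combination and exploit convexity of $\Phi$, then optimize in the splitting parameter.

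For any $\theta\in(0,1)$, convexity of $\Phi$ gives
\begin{equation*}
\Phi(a+b)=\Phi\Bigl(\theta\cdot\frac{a}{\theta}+(1-\theta)\cdot\frac{b}{1-\theta}\Bigr)\le \theta\,\Phi(a/\theta)+(1-\theta)\,\Phi(b/(1-\theta)).
\end{equation*}
Since $1/\theta>1$ and $1/(1-\theta)>1$, and since for any $r>1$ we have $\max\{r^{p_\Phi^-},r^{p_\Phi^+}\}=r^{p_\Phi^+}$, applying \eqref{H3} to each of the two terms yields
\begin{equation*}
\Phi(a+b)\le \theta^{1-p_\Phi^+}\Phi(a)+(1-\theta)^{1-p_\Phi^+}\Phi(b).
\end{equation*}

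The next step is to pick $\theta$ so that the coefficient of $\Phi(a)$ equals $\lambda$. For any Orlicz function we have $p_\Phi^+>1$: indeed $p_\Phi^+=1$ combined with $\Phi(t)\le t\varphi(t)$ would force $\Phi$ to be at most linear, contradicting $\lim_{t\to\infty}\Phi(t)/t=\infty$. So setting $\theta:=\lambda^{-1/(p_\Phi^+-1)}$ is legitimate, produces $\theta^{1-p_\Phi^+}=\lambda$, and gives $\theta\in(0,1)$ because $\lambda>1$. The entire proof then reduces to establishing
\begin{equation*}
(1-\theta)^{1-p_\Phi^+}\le \frac{C(\Phi,\Lambda)}{(\lambda-1)^{p_\Phi^+-1}},\qquad\text{equivalently},\qquad \frac{\lambda-1}{1-\lambda^{-1/(p_\Phi^+-1)}}\le \widetilde C(\Phi,\Lambda).
\end{equation*}

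This last step is a purely one-variable calculus check. Writing $q=p_\Phi^+$, the function $f(\lambda)=(\lambda-1)/(1-\lambda^{-1/(q-1)})$ is continuous on $(1,\Lambda]$, and by l'H\^opital's rule (or a Taylor expansion of $\lambda^{-1/(q-1)}$ about $\lambda=1$) one finds $\lim_{\lambda\to 1^+}f(\lambda)=q-1$. Therefore $f$ extends continuously to the compact interval $[1,\Lambda]$ and attains a finite maximum there, which serves as $\widetilde C(\Phi,\Lambda)$. I do not anticipate a serious obstacle: the conceptual content lies entirely in the convexity and \eqref{H3} step, and the only subtlety is to notice that both $\theta$ and $1-\theta$ lie in $(0,1)$, so it is the exponent $p_\Phi^+$ (and not $p_\Phi^-$) that appears, matching the exponent in the stated inequality.
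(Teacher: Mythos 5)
Your proof is correct, and it takes a genuinely different route from the paper. The paper studies the quotient $f_{a,\lambda}(b)=\big(\Phi(a+b)-\lambda\Phi(a)\big)/\Phi(b)$ directly: it disposes of $b\ge a$ via the $\D$-type bound $\Phi(2b)\le 2^{p^+_\Phi}\Phi(b)$, introduces the unique $\xi$ with $\Phi(a+\xi)=\lambda\Phi(a)$ so that $f_{a,\lambda}\le 0$ on $(0,\xi]$, and on $(\xi,a)$ combines $\Phi(a+b)-\Phi(a)\le b\,\varphi(a+b)$ with \eqref{H1}, \eqref{H4} and \eqref{H6}, finishing with a boundedness check of $(\lambda-1)^{p^+_\Phi-1}/h(\lambda)$ near $\lambda=1$. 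You instead use the convexity split $a+b=\theta\,(a/\theta)+(1-\theta)\,b/(1-\theta)$, a single application of \eqref{H3} (legitimate since $1/\theta,\,1/(1-\theta)>1$, so the exponent is $p^+_\Phi$), the choice $\theta=\lambda^{-1/(p^+_\Phi-1)}$, and the one-variable fact that $(\lambda-1)/\big(1-\lambda^{-1/(p^+_\Phi-1)}\big)$ extends continuously to $[1,\Lambda]$ with limit $p^+_\Phi-1$ at $\lambda=1^+$; this last computation and the verification that $\theta^{1-p^+_\Phi}=\lambda$ are both correct. Your argument is shorter, needs no case analysis and no properties of $\varphi$ beyond \eqref{H3}, and in the homogeneous case $\Phi=A_p$ it even reproduces the sharp constant $\big(1-\lambda^{-1/(p-1)}\big)^{-(p-1)}$, which has the right blow-up rate $(\lambda-1)^{-(p-1)}$. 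One point worth stating a bit more carefully is the claim $p^+_\Phi>1$, which your construction needs (the paper's proof never divides by $p^+_\Phi-1$ in an exponent denominator, so it does not have to isolate this fact): if $p^+_\Phi=1$ then $t\varphi(t)\le\Phi(t)=\int_0^t\varphi\le t\varphi(t)$, forcing $\varphi$ to be constant on $(0,t)$ for every $t$, hence constant, which contradicts $\varphi(t)\to\infty$ (equivalently, \eqref{H3} would give $\Phi(t)\le \Phi(1)t$ for $t\ge 1$, contradicting superlinearity); your one-line justification is essentially this and is fine once spelled out.
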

    \begin{proof} If $a=0$ or $b=0$, then the above inequality follows with $C=(\Lambda-1)^{p^+_\Phi-1}$. For $a\in (0,\infty)$ and $\lambda\in (1,\Lambda],$ consider the function $$f_{a,\lambda}(b)=\frac{\Phi(a+b)-\lambda\Phi(a)}{\Phi(b)},\;\;\;\,b\in (0,\infty).$$
 It is enough to show that there exists $C=C(\Phi,\Lambda)>0$ so that  
 \begin{equation}\label{eqn1-lem2.3}
     f_{a,\lambda}(b)\leq \frac{C}{(\lambda-1)^{p^+_\Phi-1}},\;\;\;\forall\,b\in (0,\infty).
 \end{equation}
 First, we provide estimates of $f_{a,\lambda}$ for certain values of $b.$
    For $b\geq a,$  we use \eqref{H3} to estimate
\begin{equation}\label{eqn-lem2.3}
    f_{a,\lambda}(b)\leq  \frac{\Phi(2b)-\lambda\Phi(a)}{\Phi(b)} \leq  \frac{\Phi(2b)}{\Phi(b)}\leq  \frac{2^{p^+_\Phi}\Phi(b)}{\Phi(b)}\leq\frac{2^{p^+_\Phi}(\Lambda-1)^{p^+_\Phi-1}}{(\lambda-1)^{p^+_\Phi-1}},\;\;\;\forall\,\lambda\in (1,\Lambda].
\end{equation}
Observe that, for fixed $a$ and $\lambda,$ as a function of $b$, $\Phi(a+b)-\lambda\Phi(a)$ is continuous, strictly increasing, and takes negative values near zero and positive values near infinity. Thus,  there exists a unique $\xi=\xi(a,\lambda)>0$ such that
\begin{equation}\label{xi}
    \Phi(a+\xi)-\lambda\Phi(a)=0.
\end{equation}
Therefore,
\begin{equation}\label{eq:negative}
    f_{a,\lambda}(b)\leq 0,\quad \forall \,b\in (0,\xi].
\end{equation}
Now we consider the two cases:  $(i)\, \xi\geq a, \quad (ii)\, \xi< a .$ 

\noindent $\underline{\xi\geq a}:$ In this case,   \eqref{eqn1-lem2.3}  follows easily from \eqref{eqn-lem2.3} and \eqref{eq:negative} with $C=2^{p^+_\Phi}(\Lambda-1)^{p^+_\Phi-1}$.
\smallskip

\noindent $\underline{\xi< a}:$  In this case, by \eqref{eqn-lem2.3} and \eqref{eq:negative}, it remains to estimate $f_{a,\lambda}(b)$  for $b\in(\xi,a)$. First, we  observe that
      $$\Phi(a+b)-\lambda\Phi(a)\leq \Phi(a+b)-\Phi(a)=\int_a^{a+b}\varphi(t)dt\leq b\varphi(a+b),$$ 
where the last inequality follows as  $\varphi$ is non-decreasing (see \eqref{right-deri}). Thus, using  \eqref{H1} and \eqref{H6} we get
\begin{equation}\label{eqn-2.10}
     f_{a,\lambda}(b)\leq \frac{b\varphi(a+b)}{\Phi(b)}\leq C_1\frac{b\varphi(2a)}{b\varphi(b)}\leq C_1C_2\frac{\varphi(a)}{\varphi(b)}\leq C_1C_2\frac{\varphi(a)}{\varphi(\xi)},\;\;\;\forall\,b\in(\xi,a),
\end{equation}
where $C_1,C_2$ are positive constants that depend only on $\Phi.$
From \eqref{xi}, we get $\xi=\Phi^{-1}(\lambda\Phi(a))-a$ and hence by using \eqref{H4}, we obtain
\begin{equation*}
   \xi=\Phi^{-1}(\lambda\Phi(a))-a\geq \min\left\{\lambda^{1/p^-_\Phi},\lambda^{1/p^+_\Phi}\right\}a-a=\lambda^{1/p^+_\Phi}a-a, \quad \forall\, \lambda>1. 
\end{equation*} 
Thus, by \eqref{H6} we get a constant $C_3=C_3(\Phi)>0$ such that
\begin{equation*}
 \varphi(\xi)\ge \varphi\left(\big(\lambda^{1/p^+_\Phi}-1\big)a\right)\geq C_3\min\left\{\big(\lambda^{1/p^+_\Phi}-1\big)^{p^-_\Phi-1},\big(\lambda^{1/p^+_\Phi}-1\big)^{p^+_\Phi-1}\right\}\varphi(a).
     \end{equation*}
Consequently, from \eqref{eqn-2.10} we obtain $$ f_{a,\lambda}(b)\leq  \frac{C_1C_2}{C_3h(\lambda)}= \frac{C_1C_2}{C_3(\lambda-1)^{p^+_\Phi-1}} \times\frac{(\lambda-1)^{p^+_\Phi-1}}{h(\lambda)},\;\;\;\forall\,b\in (\xi,a),$$ where
$$h(\lambda):=\min\left\{(\lambda^{1/p^+_\Phi}-1)^{p^-_\Phi-1},(\lambda^{1/p^+_\Phi}-1)^{p^+_\Phi-1}\right\}.$$
Next we show that $g(\lambda):=\frac{(\lambda-1)^{p^+_\Phi-1}}{h(\lambda)}$ is bounded on $(1,\Lambda].$
 Notice that,
 \[g(\lambda)=
     \begin{cases}
     \text{$\frac{(\lambda-1)^{p^+_\Phi-1}}{(\lambda^{1/p^+_\Phi}-1)^{p^+_\Phi-1}}$} & \quad\text{if $1<\lambda<2^{p^+_\Phi}$},\\ \text{$\frac{(\lambda-1)^{p^+_\Phi-1}}{(\lambda^{1/p^+_\Phi}-1)^{p^-_\Phi-1}}$} &\quad\text{if $\lambda\ge 2^{p^+_\Phi}$.}
     \end{cases}
\]
It is not hard to verify that $\lim_{\lambda\to 1+}g(\lambda)<\infty.$ Thus,  there exists $C_4=C_4(\Phi,\Lambda)>0$ such that  $g(\lambda)\leq C_4$ for all $\lambda\in (1,\Lambda].$ Therefore,
$$ f_{a,\lambda}(b) \leq \frac{C_1C_2C_4}{C_3(\lambda-1)^{p^+_\Phi-1}},\;\;\;\forall\,b\in (\xi,a).$$ 
This completes the proof.
     \end{proof} 
\smallskip

The next lemma proves the weighted fractional Orlicz-Hardy inequality \eqref{Hardy} for some range of $\gamma.$ This lemma is useful in proving Theorem \ref{Orlicz-Hardy}.
\begin{lemma} Let $N\geq 1,\,s\in (0,1)$, $\alpha_1, \, \alpha_2\in \mathbb{R}$,  and let 
$\gamma:=s-\alpha_1-\alpha_2$. For an Orlicz function $\Phi,$ if $\gamma<N/p^+_\Phi,$ then \eqref{Hardy} holds. Furthermore, if $\gamma\geq N/p^-_\Phi$, then \eqref{Hardy} fails.
\end{lemma}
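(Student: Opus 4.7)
The plan is to handle the two assertions separately. For the \emph{sufficiency} $\gamma<N/p^+_\Phi$, fix any $R>0$ and decompose $\mathbb{R}^N=\bigcup_{k\in\mathbb{Z}}A_k(R)$. For $u\in\mathcal{C}_c^1(\mathbb{R}^N)$, Lemma \ref{Lemma-1-2}(i) reduces the annular piece of the left-hand side of \eqref{Hardy} to $C\,2^{kN}\Phi(2^{-k\gamma}|(u)_{A_k(R)}|)+C\,I_k$, where $I_k$ is the local double integral already present on the right-hand side of \eqref{Hardy}. Since the annuli $A_k(R)$ are pairwise disjoint, $\sum_k I_k$ is dominated by the right-hand side of \eqref{Hardy}.

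The heart of the argument is to bound $\sum_k 2^{kN}\Phi(2^{-k\gamma}|(u)_{A_k(R)}|)$. Because $u$ has compact support, $(u)_{A_k(R)}=0$ for all sufficiently large $k$, so telescoping gives $2^{-k\gamma}|(u)_{A_k(R)}|\leq S_k:=\sum_{j\geq k}2^{(j-k)\gamma}d_j$, with $d_j:=2^{-j\gamma}|(u)_{A_j(R)}-(u)_{A_{j+1}(R)}|$. Applying Jensen's inequality with the geometric probability weights $p_i=(1-q)q^i$ for $q\in(0,1)$, followed by \eqref{H3}, produces
\[
\Phi(S_k)\leq (1-q)^{1-p^+_\Phi}\sum_{i\geq 0}\bigl(2^{\gamma p^+_\Phi}\,q^{\,1-p^+_\Phi}\bigr)^i\,\Phi(d_{k+i}).
\]
The hypothesis $\gamma<N/p^+_\Phi$ gives $2^{\gamma p^+_\Phi}<2^N$, so $q\in\bigl(2^{(\gamma p^+_\Phi-N)/(p^+_\Phi-1)},1\bigr)$ can be chosen with $E':=2^{\gamma p^+_\Phi}q^{\,1-p^+_\Phi}<2^N$.

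Multiplying by $2^{kN}$, swapping the order of summation, and summing the resulting geometric series (convergent precisely because $E'<2^N$) yields $\sum_k 2^{kN}\Phi(S_k)\leq C'\sum_j 2^{jN}\Phi(d_j)$. Lemma \ref{Lemma-1-2}(ii) converts each $2^{jN}\Phi(d_j)$ into a double integral over $A_j(R)\cup A_{j+1}(R)$; since a pair $(x,y)$ lies in $A_j(R)\cup A_{j+1}(R)$ for at most two values of $j$, this sum is dominated by the right-hand side of \eqref{Hardy}, completing the sufficiency.

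For the \emph{failure} at $\gamma\geq N/p^-_\Phi$, pick $u\in\mathcal{C}_c^1(\mathbb{R}^N)$ with $u(0)=1$, so that $|u|\geq 1/2$ on some ball $B_\rho(0)$. For $|x|$ small enough that $1/(2|x|^\gamma)\geq 1$, the lower bound in \eqref{H3} gives $\Phi(|u(x)|/|x|^\gamma)\geq \Phi(1)\,2^{-p^-_\Phi}|x|^{-\gamma p^-_\Phi}$, and spherical coordinates yield
\[
\int_{\mathbb{R}^N}\Phi\!\left(\frac{|u(x)|}{|x|^\gamma}\right)dx\geq c\int_0^{\rho'}r^{N-1-\gamma p^-_\Phi}\,dr=+\infty,
\]
since $\gamma p^-_\Phi\geq N$. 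A standard compactly supported bump keeps the right-hand side of \eqref{Hardy} finite, so no finite constant $C$ can make \eqref{Hardy} valid. The main obstacle in the positive direction is the summation trick: a naive single application of Lemma \ref{Lem-H} only gives $\gamma<N/p^+_\Phi-1$, because the case $b\geq a$ in that lemma forces $C(\Phi,\Lambda)/(\Lambda-1)^{p^+_\Phi-1}\geq 2^{p^+_\Phi}$; Jensen with weights that can be pushed arbitrarily close to a point mass as $q\to 1$ (equivalently an iteration of the lemma over sufficiently large blocks) is needed to fully exploit the strict inequality $2^{\gamma p^+_\Phi}<2^N$.
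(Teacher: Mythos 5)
Your sufficiency argument is correct in substance, but it takes a genuinely different route from the paper at the key summation step. The paper keeps the recursion between consecutive annuli: it applies Lemma \ref{Lem-H} once, with a fixed $\lambda=\Lambda$ slightly larger than $1$, to pass from $(u)_{A_k}$ to $(u)_{A_{k+1}}$, multiplies by $2^{kN}$, sums, re-indexes, and absorbs $\Lambda\,2^{-N}\max\{2^{\gamma p^-_\Phi},2^{\gamma p^+_\Phi}\}\sum_k 2^{kN}\Phi(2^{-k\gamma}|(u)_{A_k}|)$ into the left-hand side, which is possible exactly when $\gamma<N/p^+_\Phi$. You instead telescope the averages all the way out, $2^{-k\gamma}|(u)_{A_k}|\le S_k$, and control $\Phi(S_k)$ by Jensen's inequality with geometric weights $(1-q)q^i$ combined with \eqref{H3}, then swap the order of summation; this avoids Lemma \ref{Lem-H} entirely and replaces the absorption by convergence of a geometric series ($E'<2^N$). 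Both routes rest on the same dyadic decomposition and on Lemma \ref{Lemma-1-2}, and both exploit $\gamma p^+_\Phi<N$ in the same quantitative way; yours is more self-contained, while the paper's recursion (with $\lambda$ allowed to depend on $k$ and the explicit blow-up rate $(\lambda-1)^{1-p^+_\Phi}$ of Lemma \ref{Lem-H}) is what later yields the logarithmic endpoint results of Theorem \ref{log-orlicz}. One detail to patch: the bound $\Phi\big((2^{i\gamma}/p_i)\,d_{k+i}\big)\le (2^{i\gamma}/p_i)^{p^+_\Phi}\Phi(d_{k+i})$ uses the upper branch of \eqref{H3} and needs $2^{i\gamma}/p_i\ge 1$, which is automatic for $\gamma\ge 0$ but not for $\gamma<0$; there either take $q<2^{\gamma}$ (still compatible with $E'<2^N$ since $\gamma<N$) or keep the full $\max\{\cdot^{p^-_\Phi},\cdot^{p^+_\Phi}\}$ and note that for $q$ close to $1$ both ratios $2^{\gamma p^{\pm}_\Phi}q^{1-p^{\pm}_\Phi}$ stay below $2^N$. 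The paper makes the analogous case split on the sign of $\gamma$. Your failure argument is the same as the paper's (lower bound via \eqref{H3} and non-integrability of $|x|^{-\gamma p^-_\Phi}$), including the same implicit step of taking the finiteness of the right-hand side for a bump for granted, which for strongly negative $\alpha_1$ or $\alpha_2$ would deserve a word.

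Your closing diagnosis, however, is incorrect: a single application of Lemma \ref{Lem-H} with constant $\lambda$ does reach the full range $\gamma<N/p^+_\Phi$, and this is precisely the paper's proof. The large constant $C(\Phi,\Lambda)/(\lambda-1)^{p^+_\Phi-1}$ (which indeed cannot be made small) multiplies only the difference term $\Phi\big(2^{-k\gamma}|(u)_{A_k}-(u)_{A_{k+1}}|\big)$, and by Lemma \ref{Lemma-1-2}(ii) that term is summed against the double integral with bounded overlap, so its size has no effect on the admissible range of $\gamma$; the absorption concerns only the factor in front of the average term, and the requirement $\Lambda\,2^{-N}\max\{2^{\gamma p^-_\Phi},2^{\gamma p^+_\Phi}\}<1$ for some $\Lambda>1$ is equivalent to $\gamma<N/p^+_\Phi$. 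So the Jensen/geometric-weights device is a nice alternative, not a necessity.
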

\begin{proof}
Let $u\in \mathcal{C}_c^1(\mathbb{R}^N)$ and  $k\in \mathbb{Z}$. Choose $n_0\in \mathbb{Z}$ such that $supp(u)\subset B_{2^{n_0+1}}(0)$. Recall that $A_k(R)=\{x\in \mathbb{R}^N:2^kR\le |x|< 2^{k+1}R\}$. For simplicity, denote $A_k(1)$ by $A_k$. Now, apply Lemma \ref{Lemma-1-2} with $R=1$ to  get
\smallskip
\begin{align*}
\int_{A_k}\Phi\left(\frac{|u(x)|}{|x|^{\gamma}}\right)dx  \leq &C\left\{2^{kN}\Phi\left(2^{-k\gamma}|(u)_{A_k}|\right)+ \int_{A_k}\int_{A_k}\Phi\left(|x|^{\alpha_1}|y|^{\alpha_2}|D_su(x,y)|\right)d\mu\right\},
\end{align*} 
where $C=C(s,\alpha_1,\alpha_2,N,\Phi)$ is a positive constant.
Let $m\in \mathbb{Z}$ such that $m < n_0$. Summing the above
inequalities from $m$ to $n_0$, we get
\begin{multline}\label{eqn-thm1*}
     \int_{\{|x|\geq 2^m\}}  \Phi\left(\frac{|u(x)|}{|x|^\gamma}\right)dx=\sum_{k=m}^{n_0}\int_{A_k}  \Phi\left(\frac{|u(x)|}{|x|^\gamma}\right)dx\leq C\sum_{k=m}^{n_0}2^{kN}\Phi\left(2^{-k\gamma}|(u)_{A_k}|\right)\\+ C \int_{\mathbb{R}^N}\int_{\mathbb{R}^N}\Phi\left(|x|^{\alpha_1}|y|^{\alpha_2}|D_su(x,y)|\right)d\mu.
\end{multline}
Next, we estimate the first term of the right-hand side of the above inequality. Let $\Lambda>1$ be a number whose value will be chosen later. Then, by triangular inequality and Lemma \ref{Lem-H} with $\lambda=\Lambda$, there exists a positive constant $C_1=C_1(\Phi,\Lambda)$ such that
\begin{align*}
    \Phi\left(2^{-k\gamma}|(u)_{A_k}|\right)&\leq\Phi \left(2^{-k\gamma}|(u)_{A_{k+1}}|+2^{-k\gamma}\left|(u)_{A_k}- (u)_{A_{k+1}}\right|\right)\\&\leq \Lambda\Phi\left(2^{-k\gamma}|(u)_{A_{k+1}}|\right)+C_1\Phi \left(2^{-k\gamma}\left|(u)_{A_k}- (u)_{A_{k+1}}\right|\right),\;\;\;\forall\,k\in \mathbb{Z}.
\end{align*}
It follows from \eqref{H3} that 
$$\Phi\left(2^{-k\gamma}|(u)_{A_{k+1}}|\right)=\Phi\left(2^{\gamma }\cdot 2^{-(k+1)\gamma}|(u)_{A_{k+1}}|\right)\leq  \max\left\{2^{\gamma p^-_\Phi},2^{\gamma p^+_\Phi}\right\}\Phi\left(2^{-(k+1)\gamma}|(u)_{A_{k+1}}|\right).$$
Further, applying Lemma \ref{Lemma-1-2} with $R=1$ we get a positive constant $C_2=C_2(s,\alpha_1,\alpha_2,N,\Phi)$ such that
\begin{align*}
     \Phi\left(2^{-k\gamma}\left|(u)_{A_k}- (u)_{A_{k+1}}\right|\right)\leq \frac{C_2}{2^{kN}}\int_{A_k\cup A_{k+1}}\int_{A_k\cup A_{k+1}}\Phi\left(|x|^{\alpha_1}|y|^{\alpha_2}|D_su(x,y)|\right)d\mu.
  \end{align*}  
By combining the above there inequalities, we obtain
\begin{multline*}
\Phi\left(2^{-k\gamma}|(u)_{A_k}|\right)\leq \Lambda\max\left\{2^{\gamma p^-_\Phi },2^{\gamma p^+_\Phi}\right\}\Phi\left(2^{-(k+1)\gamma}|(u)_{A_{k+1}}|\right) \\+ \frac{C_1C_2}{ 2^{kN}}\int_{A_k\cup A_{k+1}}\int_{A_k\cup A_{k+1}}\Phi\left(|x|^{\alpha_1}|y|^{\alpha_2}|D_su(x,y)|\right)d\mu.
\end{multline*}
Multiply both sides of the above inequality by $2^{kN}$  to obtain
\begin{multline*}
2^{kN}\Phi\left(2^{-k\gamma}|(u)_{A_k}|\right)\leq \Lambda 2^{-N}\max\left\{2^{\gamma p^-_\Phi},2^{\gamma p^+_\Phi}\right\} 2^{(k+1)N}\Phi\left(2^{-(k+1)\gamma}|(u)_{A_{k+1}}|\right) \\+ C_1 C_2\int_{A_k\cup A_{k+1}}\int_{A_k\cup A_{k+1}}\Phi\left(|x|^{\alpha_1}|y|^{\alpha_2}|D_su(x,y)|\right)d\mu.
\end{multline*}
Therefore,
 \begin{multline}\label{eqn-thm1}
\sum_{k=m}^{n_0}2^{kN}\Phi\left(2^{-k\gamma}|(u)_{A_k}|\right)\leq \Lambda 2^{-N}\max\left\{2^{\gamma p^-_\Phi },2^{\gamma p^+_\Phi}\right\}\sum_{k=m}^{n_0} 2^{(k+1)N}\Phi\left(2^{-(k+1)\gamma}|(u)_{A_{k+1}}|\right) \\+ C_1 C_2\int_{\mathbb{R}^N}\int_{\mathbb{R}^N}\Phi\left(|x|^{\alpha_1}|y|^{\alpha_2}|D_su(x,y)|\right)d\mu.
\end{multline}
Since $u\equiv 0$ on $A_{n_0+1}$, we have $(u)_{A_{n_0+1}}=0.$ Thus by re-indexing, we get
\begin{equation*}
    \sum_{k=m}^{n_0} 2^{(k+1)N}\Phi\left(2^{ -(k+1)\gamma}|(u)_{A_{k+1}}|\right)=\sum_{k=m+1}^{n_0} 2^{kN}\Phi\left(2^{ -k\gamma}|(u)_{A_{k}}|\right)\le \sum_{k=m}^{n_0} 2^{kN}\Phi\left(2^{ -k\gamma}|(u)_{A_{k}}|\right).
\end{equation*}
Consequently, from \eqref{eqn-thm1} we obtain
 \begin{multline}\label{eqn-thm1-1}
\sum_{k=m}^{n_0}2^{kN}\Phi\left(2^{-k\gamma}|(u)_{A_k}|\right)\leq \Lambda 2^{-N}\max\left\{2^{\gamma p^-_\Phi},2^{\gamma p^+_\Phi}\right\}\sum_{k=m}^{n_0} 2^{kN}\Phi\left(2^{-k\gamma}|(u)_{A_k}|\right) \\+ C_1 C_2\int_{\mathbb{R}^N}\int_{\mathbb{R}^N}\Phi\left(|x|^{\alpha_1}|y|^{\alpha_2}|D_su(x,y)|\right)d\mu.
\end{multline}
If $\gamma>0$, then  $\max\left\{2^{\gamma p^-_\Phi},2^{\gamma p^+_\Phi}\right\}=2^{\gamma p^+_\Phi}$. 
 Since $\gamma<N/p^+_\Phi$, there exists  
 $\Lambda=\Lambda(N,\gamma,p^+_\Phi)>1$ such that $$\Lambda 2^{-N}2^{\gamma p^+_\Phi}<1.$$
On the other hand, if $\gamma\leq 0$ then $\max\left\{2^{\gamma p^-_\Phi },2^{\gamma p^+_\Phi}\right\}=2^{\gamma p^-_\Phi}$ and there exists $\Lambda=\Lambda(N,\gamma,p^-_\Phi)>1$ such that
 $\Lambda2^{-N}2^{\gamma p^-_\Phi}<1.$
 Therefore, for all $\gamma\in \mathbb{R}$ we get a constant $\Lambda=\Lambda(N,\gamma,\Phi)>1$ such that $$C_3:=\Lambda 2^{-N}\max\left\{2^{\gamma p^-_\Phi },2^{\gamma p^+_\Phi}\right\}<1.$$
Now  from \eqref{eqn-thm1-1} we obtain
 \begin{equation*}
\left(1-C_3\right)\sum_{k=m}^{n_0}2^{kN}\Phi\left(2^{-k\gamma}|(u)_{A_k}|\right)\leq   C_1C_2 \int_{\mathbb{R}^N}\int_{\mathbb{R}^N}\Phi\left(|x|^{\alpha_1}|y|^{\alpha_2}|D_su(x,y)|\right)d\mu.
\end{equation*}
Thus, from \eqref{eqn-thm1*}  we get
 \begin{align*}
   \int_{\{|x|\ge 2^m\}}  \Phi\left(\frac{|u(x)|}{|x|^\gamma}\right)dx\leq C\left(1+\frac{C_1C_2}{1-C_3}\right) \int_{\mathbb{R}^N}\int_{\mathbb{R}^N}\Phi\left(|x|^{\alpha_1}|y|^{\alpha_2}|D_su(x,y)|\right)d\mu.
\end{align*}
Hence, \eqref{Hardy} follows by taking 
$m\to-\infty$.
\smallskip

\noindent Now, we prove the second part of the theorem.
Recall that, $B_1(0)$ is the open ball centred at the origin with radius $1$. By using \eqref{H3} and $\gamma>0$, we get
\begin{align}\label{eqn-fails}
\int_{B_1(0)}\Phi\left(\frac{|u(x)|}{|x|^\gamma}\right)dx &\geq  \int_{B_1(0)}\min\left\{\frac{1}{|x|^{\gamma p^-_\Phi}},\frac{1}{|x|^{\gamma p^+_\Phi}}\right\}\Phi\left(|u(x)|\right)dx\nonumber \\& =\int_{B_1(0)}\frac{\Phi\left(|u(x)|\right)}{|x|^{\gamma p^-_\Phi}}dx,\;\;\;\forall\,u\in \mathcal{C}_c^1(\mathbb{R}^N).
\end{align}
Now if  $\gamma\geq N/p^-_\Phi $, then  $1/|x|^{\gamma p^-_\Phi}$ is not locally integrable on $\mathbb{R}^N$. Thus, from \eqref{eqn-fails}, we can conclude that \eqref{Hardy} fails for any $u\in \mathcal{C}_c^1(\mathbb{R}^N)$ with  $u\equiv 1$ on $B_1(0)$. This completes the proof. 
\end{proof}

\noindent\textbf{Proof of Theorem \ref{Orlicz-Hardy}.} For $\gamma < N/p_\Phi^\oplus,$ we have  $p_\Phi^\oplus<N/\gamma.$ Thus, as   $p_\Phi^\oplus$ being an infimum, there exists $\Psi\asymp\Phi$ such that $$p_\Phi^\oplus\le p^+_\Psi<N/\gamma.$$
Now apply the above lemma for $\Psi$ and use  $\Phi \asymp\Psi$ to conclude that \eqref{Hardy} holds for $\Phi$ for $\gamma < N/p_\Phi^\oplus$.

\noindent On the other hand, for  $\gamma>N/p^\ominus_\Phi,$
we use the definition of $p^\ominus_\Phi$ to get  $\Psi\asymp\Phi$ such that $$p_\Phi^\ominus\ge p^-_\Psi>N/\gamma.$$
 Therefore, applying Theorem \ref{Orlicz-Hardy} for $\Psi$ and using $\Phi \asymp\Psi$ we conclude that \eqref{Hardy} fails  for  $\gamma>N/p^\ominus_\Phi$. If $p_\Phi^\ominus$ is attained at some $\Psi \asymp \Phi,$ then by applying the above lemma for $\Psi$ and using $\Psi \asymp \Phi,$ we conclude that \eqref{Hardy} fails  also at $\gamma=N/p^\ominus_\Phi.$
 \qed
 
 \smallskip
  
Next, we prove an important lemma. 
\begin{lemma}
 Let $N\geq 1,\, s\in (0,1)$, ~$\alpha_1, ~\alpha_2\in \mathbb{R}$,  and let  $\gamma:=s-\alpha_1-\alpha_2$. For an  Orlicz function $\Phi$, if $\gamma>N/p^-_\Phi$, then   \eqref{Hardy-0} holds.
\end{lemma}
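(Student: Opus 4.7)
The plan is to mirror the proof of the previous lemma but iterate the key annular estimate in the opposite direction, exploiting that here $u$ vanishes near the origin rather than at infinity. Fix $u\in\mathcal{C}_c^1(\mathbb{R}^N\setminus\{0\})$. Since $\mathrm{supp}(u)$ is compact and disjoint from the origin, choose integers $m_0<n_0$ with $\mathrm{supp}(u)\subset\bigcup_{k=m_0}^{n_0}A_k$, where $A_k:=A_k(1)$; in particular $(u)_{A_k}=0$ for $k<m_0$ and for $k>n_0$. Applying Lemma~\ref{Lemma-1-2}(i) with $R=1$ on each $A_k$ and summing over $m_0\le k\le n_0$ reduces the proof of \eqref{Hardy-0} to a bound of the form
$$\sum_{k=m_0}^{n_0}2^{kN}\Phi\bigl(2^{-k\gamma}|(u)_{A_k}|\bigr)\le C\,I,\qquad\text{with }I:=\int_{\mathbb{R}^N}\!\!\int_{\mathbb{R}^N}\Phi\bigl(|x|^{\alpha_1}|y|^{\alpha_2}|D_su|\bigr)\,d\mu.$$

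For the key iteration, I combine the triangle inequality with Lemma~\ref{Lem-H} (applied at $\lambda=\Lambda$ for some $\Lambda>1$ to be chosen) to obtain
$$\Phi\bigl(2^{-k\gamma}|(u)_{A_k}|\bigr)\le \Lambda\,\Phi\bigl(2^{-k\gamma}|(u)_{A_{k-1}}|\bigr)+C_1\,\Phi\bigl(2^{-k\gamma}|(u)_{A_k}-(u)_{A_{k-1}}|\bigr).$$
Since $p_\Phi^-$ is finite, the hypothesis gives $\gamma>N/p_\Phi^->0$, so $2^{-\gamma}<1$; writing $2^{-k\gamma}=2^{-\gamma}\cdot 2^{-(k-1)\gamma}$ and applying \eqref{H3} with the exponent $p_\Phi^-$ (which wins when the scaling factor is below $1$) yields $\Phi(2^{-k\gamma}|(u)_{A_{k-1}}|)\le 2^{-\gamma p_\Phi^-}\Phi(2^{-(k-1)\gamma}|(u)_{A_{k-1}}|)$. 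The telescoping term is controlled by Lemma~\ref{Lemma-1-2}(ii) with index shifted down by one (and, by monotonicity of $\Phi$, one first replaces $2^{-k\gamma}$ by $2^{-(k-1)\gamma}$ inside). Multiplying through by $2^{kN}$ produces
$$2^{kN}\Phi\bigl(2^{-k\gamma}|(u)_{A_k}|\bigr)\le \Lambda\cdot 2^{N-\gamma p_\Phi^-}\cdot 2^{(k-1)N}\Phi\bigl(2^{-(k-1)\gamma}|(u)_{A_{k-1}}|\bigr)+C_1 C_2\cdot 2^N J_k,$$
where $J_k$ denotes the fractional energy on $(A_{k-1}\cup A_k)\times(A_{k-1}\cup A_k)$.

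Summing for $k=m_0,\dots,n_0$ and re-indexing $j=k-1$ gives a sum over $j=m_0-1,\dots,n_0-1$; the term at $j=m_0-1$ vanishes because $(u)_{A_{m_0-1}}=0$, and what remains is majorized by $\sum_{k=m_0}^{n_0}2^{kN}\Phi(2^{-k\gamma}|(u)_{A_k}|)$. The hypothesis $\gamma>N/p_\Phi^-$ is exactly what makes $2^{N-\gamma p_\Phi^-}<1$, so one picks $\Lambda>1$ small enough that $C_3:=\Lambda\,2^{N-\gamma p_\Phi^-}<1$; absorbing the geometric term gives $(1-C_3)\sum_k 2^{kN}\Phi(\cdots)\le C'\,I$, and substituting back into the reduction from the first paragraph completes the proof. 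The point that requires care is the direction of \eqref{H3}: because the scaling factor $2^{-\gamma}$ is strictly less than $1$, it is $p_\Phi^-$ (and not $p_\Phi^+$) that appears in the geometric factor, which is precisely why the correct threshold here is $\gamma>N/p_\Phi^-$, in contrast to $\gamma<N/p_\Phi^+$ in the previous lemma.
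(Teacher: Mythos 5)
Your proof is correct and follows essentially the same route as the paper: the same reduction via Lemma~\ref{Lemma-1-2}(i), the same iteration combining the triangle inequality with Lemma~\ref{Lem-H}, the same use of \eqref{H3} with the exponent $p_\Phi^-$ (since $2^{-\gamma}<1$), and the same re-indexing/absorption argument using $\Lambda\,2^{N-\gamma p_\Phi^-}<1$. The only cosmetic difference is that you control the telescoping term by applying Lemma~\ref{Lemma-1-2}(ii) at index $k-1$ with $R=1$ together with monotonicity of $\Phi$, whereas the paper applies it at index $k$ with $R=1/2$; both give the same bound up to a harmless factor of $2^N$.
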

\begin{proof}
Let $u\in \mathcal{C}_c^1(\mathbb{R}^N\setminus\{0\})$ and  $k\in \mathbb{Z}$. Choose $n_0,m_0\in \mathbb{Z}$ with $m_0<n_0$ such that $supp(u)\cap B_{2^{m_0}}(0)=\emptyset$ and $supp(u)\subset B_{2^{n_0+1}}(0).$ Recall that $A_k(R)=\{x\in \mathbb{R}^N:2^kR\le |x|<2^{k+1}R\},\;R>0.$ For simplicity, let's denote $A_k(1)$ as $A_k$. Then, we have $A_k(1/2)=A_{k-1}.$  Applying Lemma \ref{Lemma-1-2} with $R=1$,
we get
  \begin{align*}
   \int_{\mathbb{R}^N}  \Phi\left(\frac{|u(x)|}{|x|^\gamma}\right)dx&=\sum_{k=m_0}^{n_0}\int_{A_k}  \Phi\left(\frac{|u(x)|}{|x|^\gamma}\right)dx\\& \leq C\sum_{k=m_0}^{n_0}\left\{2^{kN}\Phi\left(2^{-k\gamma}|(u)_{A_k}|\right)+ \int_{A_k}\int_{A_k}\Phi\left(|x|^{\alpha_1}|y|^{\alpha_2}|D_su(x,y)|\right)d\mu\right\}\\& \leq C\sum_{k=m_0}^{n_0}2^{kN}\Phi\left(2^{-k\gamma}|(u)_{A_k}|\right)+ C \int_{\mathbb{R}^N}\int_{\mathbb{R}^N}\Phi\left(|x|^{\alpha_1}|y|^{\alpha_2}|D_su(x,y)|\right)d\mu,
\end{align*}
where $C=C(s,\alpha_1,\alpha_2,N,\Phi)$ is a positive constant. Thus, it is sufficient to show that there exists a  positive constant $C_1$ independent of $u$  such that
  \begin{align}\label{thm3-eq1}
\sum_{k=m_0}^{n_0}2^{kN}\Phi\left(2^{-k\gamma}|(u)_{A_k}|\right)\leq  C_1 \int_{\mathbb{R}^N}\int_{\mathbb{R}^N}\Phi\left(|x|^{\alpha_1}|y|^{\alpha_2}|D_su(x,y)|\right)d\mu.
\end{align}
Let $\Lambda>1$ be fixed, which will be chosen later. By triangular inequality and Lemma \ref{Lem-H} with $\lambda=\Lambda$, we get a positive constant $C_1=C_1(\Phi,\Lambda)$ such that
\begin{align*}
    \Phi\left(2^{-k\gamma}|(u)_{A_{k}}|\right)&\leq \Phi\left(2^{-k\gamma}|(u)_{A_{k-1}}|+2^{-k\gamma}\left|(u)_{A_{k-1}}-(u)_{A_{k}}\right|\right)\\&\leq \Lambda\Phi\left(2^{-k\gamma}|(u)_{A_{k-1}}|\right)+C_1\Phi \left(2^{-k\gamma}\left|(u)_{A_{k-1}}- (u)_{A_{k}}\right|\right),\;\;\;\forall\, k\in \mathbb{Z}.
\end{align*}
 Since $\gamma>0$, it follows from \eqref{H3} that 
$$\Phi\left(2^{ -k\gamma}|(u)_{A_{k-1}}|\right)=\Phi\left(2^{-\gamma}\cdot2^{ -(k-1)\gamma}|(u)_{A_{k-1}}|\right)\leq  2^{-\gamma p^-_\Phi}\Phi\left(2^{-(k-1)\gamma}|(u)_{A_{k-1}}|\right).$$ 
Further, applying Lemma \ref{Lemma-1-2} with $R=1/2$, we get  a constant $C_2>0$ such that  
\begin{equation*}
\Phi\left(2^{-k\gamma}\left|(u)_{A_{k-1}}- (u)_{A_{k}}\right|\right)\leq \frac{C_2}{2^{kN}}\int_{A_{k-1}\cup A_{k}}\int_{A_{k-1}\cup A_{k}}\Phi\left(|x|^{\alpha_1}|y|^{\alpha_2}|D_su(x,y)|\right)d\mu.
  \end{equation*}  
By combining the above three inequalities yield
\begin{multline}\label{eqn-main2}
\Phi\left(2^{ -k\gamma}|(u)_{A_{k}}|\right)\leq \Lambda 2^{-\gamma p^-_\Phi}\Phi\left(2^{ -(k-1)\gamma}|(u)_{A_{k-1}}|\right) \\+ \frac{C_1C_2}{ 2^{kN}}\int_{A_{k-1}\cup A_{k}}\int_{A_{k-1}\cup A_{k}}\Phi\left(|x|^{\alpha_1}|y|^{\alpha_2}|D_su(x,y)|\right)d\mu.
\end{multline}
Multiply both sides of the above inequality by $2^{kN}$  to obtain
\begin{multline*}
2^{kN}\Phi\left(2^{ -k\gamma}|(u)_{A_{k}}|\right)\leq  \Lambda2^{N-\gamma p^-_\Phi}2^{(k-1)N}\Phi\left(2^{-(k-1)\gamma}|(u)_{A_{k-1}}|\right) \\+ C_1C_2 \int_{A_{k-1}\cup A_{k}}\int_{A_{k-1}\cup A_{k}}\Phi\left(|x|^{\alpha_1}|y|^{\alpha_2}|D_su(x,y)|\right)d\mu.
\end{multline*}
Summing the above inequalities  from  $m_0$ to $n_0$, we obtain 
 \begin{multline}\label{eqn-thm3-2}
\sum_{k=m_0}^{n_0}2^{kN}\Phi\left(2^{-k\gamma}|(u)_{A_{k}}|\right)\leq  \Lambda2^{N-\gamma p^-_\Phi} \sum_{k=m_0}^{n_0} 2^{(k-1)N}\Phi\left(2^{ -(k-1)\gamma}|(u)_{A_{k-1}}|\right) \\+ C_1C_2 \int_{\mathbb{R}^N}\int_{\mathbb{R}^N}\Phi\left(|x|^{\alpha_1}|y|^{\alpha_2}|D_su(x,y)|\right)d\mu.
\end{multline}
Since $u\equiv 0$ on $A_{m_0-1}$, we have $(u)_{A_{m_0-1}}=0$. Now, by re-indexing, we get
\begin{align*}
    \sum_{k=m_0}^{n_0} 2^{(k-1)N}\Phi\left(2^{ -(k-1)\gamma}|(u)_{A_{k-1}}|\right)=\sum_{k=m_0}^{n_0-1} 2^{kN}\Phi\left(2^{ -k\gamma}|(u)_{A_{k}}|\right)\leq \sum_{k=m_0}^{n_0} 2^{kN}\Phi\left(2^{ -k\gamma}|(u)_{A_{k}}|\right).
\end{align*}
Therefore, from \eqref{eqn-thm3-2} we obtain
\begin{equation*}
\left\{1- \Lambda2^{N-\gamma p^-_\Phi}\right\}\sum_{k=m_0}^{n_0}2^{kN}\Phi\left(2^{-k\gamma}|(u)_{A_k}|\right)
\leq    C_1C_2\int_{\mathbb{R}^N}\int_{\mathbb{R}^N}\Phi\left(|x|^{\alpha_1}|y|^{\alpha_2}|D_su(x,y)|\right)d\mu
\end{equation*}
Since $\gamma>N/p^-_\Phi$, there exists a constant $\Lambda=\Lambda(N,\gamma,p^-_\Phi)>1$ such that $$\Lambda2^{N-\gamma p^-_\Phi}<1.$$
Thus, \eqref{thm3-eq1} follows after observing  that $ 1>\Lambda2^{N-\gamma p^-_\Phi}$.
This completes the proof.
\end{proof}

\noindent\textbf{Proof of Theorem \ref{Orlicz-Hardy2}.} For $\gamma > N/p_\Phi^\ominus,$ we have  $p_\Phi^\ominus>N/\gamma.$ Thus, as   $p_\Phi^\ominus$ being a supremum, there exists $\Psi\asymp\Phi$ such that $$p_\Phi^\ominus\ge p^-_\Psi>N/\gamma.$$
Now apply the above lemma for $\Psi$ and use  $\Phi \asymp\Psi$ to conclude that \eqref{Hardy-0} holds for $\gamma >N/p_\Phi^\ominus$. 
\qed 
 \smallskip

\noindent\textbf{Proof of Theorem \ref{log-orlicz}:} Recall that $A_k(R)=\{x\in \mathbb{R}^N: 2^kR\leq |x|<2^{k+1}R\},\;k\in \mathbb{Z}$.  For simplicity, let's denote $A_k(R)$ as $A_k$. 

\noindent $(i)$ Let $p^\oplus_\Phi$ is attained at $\Psi\asymp\Phi$. Thus,  $p^\oplus_\Phi=p_\Psi^+$ and hence $\gamma=N/p^\oplus_\Phi$ implies $\gamma=N/p^+_\Psi$.
Let $u\in \mathcal{C}^1_c(\mathbb{R}^N)$ such that $supp(u)\subset B_R(0)$. We have 
$$B_R(0)=\bigcup_{k=-\infty}^{-1}A_k.$$ 
 Now for $x\in A_k$ with $k\in \mathbb{Z}^-=\{n\in \mathbb{Z}: n<0\},$ we have $$\log(2R/|x|)>-k\log 2\ge \log 2.$$
Therefore, 
$$\int_{A_k}\frac{\Psi\left(|x|^{-\gamma}|u(x)|\right)}{\big(\log(2R/|x|)\big)^{p^+_\Psi}}dx \leq \frac{1}{(-k)^{p^+_\Psi}(\log 2)^{p^+_\Psi}}\int_{A_k}\Psi\left(\frac{|u(x)|}{|x|^{\gamma}}\right)dx,\;\;\;\forall\,k\in\mathbb{Z}^-.$$
Further, by  Lemma \ref{Lemma-1-2}, we have
\begin{align*}
\int_{A_k}\Psi\left(\frac{|u(x)|}{|x|^{\gamma}}\right)dx  \leq C\left\{2^{kN}\Psi\left(2^{-k\gamma}|(u)_{A_k}|\right)+\int_{A_k}\int_{A_k}\Psi(|x|^{\alpha_1}|y|^{\alpha_2}|D_su(x,y)|)d\mu\right\}.
\end{align*} 
Consequently, using $\frac{1}{(-k)^{p^+_\Psi}}\le 1$ for every $k\in\mathbb{Z}^-,$ we get
\begin{multline*}
      \int_{A_k}\frac{\Psi\left(|x|^{-\gamma}|u(x)|\right)}{\big(\log(2R/|x|)\big)^{p^+_\Psi}}dx \leq \frac{C2^{kN}}{(-k)^{p^+_\Psi}}\Psi\left(2^{-k\gamma}|(u)_{A_k}|\right)+C \int_{A_k}\int_{A_k}\Psi(|x|^{\alpha_1}|y|^{\alpha_2}|D_su(x,y)|)d\mu,
\end{multline*}
where $C$ is a positive constant (independent of both $u$ and $k$). Summing the above inequalities from $m\in \mathbb{Z}^-$ to $-1,$ we obtain
\begin{multline}
     \int_{\{2^mR\le |x|< R\}}\frac{\Psi\left(|x|^{-\gamma}|u(x)|\right)}{\big(\log(2R/|x|)\big)^{p^+_\Psi}}dx \leq C\sum_{k=m}^{-1}\frac{2^{kN}}{(-k)^{p^+_\Psi}}\Psi\left(2^{-k\gamma}|(u)_{A_k}|\right)\\+C \int_{B_R(0)}\int_{B_R(0)}\Psi(|x|^{\alpha_1}|y|^{\alpha_2}|D_su(x,y)|)d\mu.\label{log-eq1*}
\end{multline} 
Next, we estimate the first term on the right-hand side of the above inequality.
 By triangular inequality and Lemma \ref{Lem-H} with $\Lambda=2^{p^+_\Psi}$, we get a constant $C_1=C_1(\Psi,\Lambda)>0$ satisfying \begin{align*}
\Psi\left(2^{-k\gamma}|(u)_{A_k}|\right)&\leq \Psi\left(2^{-k\gamma}|(u)_{A_{k+1}}|+2^{-k\gamma}\left|(u)_{A_k}-(u)_{A_{k+1}}\right|\right)\\&\leq \lambda\Psi\left(2^{-k\gamma}|(u)_{A_{k+1}}|\right)+\frac{C_1}{\left(\lambda-1\right)^{p^+_\Psi-1}}\Psi\left(2^{-k\gamma}\left|(u)_{A_k}- (u)_{A_{k+1}}\right|\right),
\end{align*}
for every $\lambda\in(1,2^{p^+_\Psi})$ and $k\in \mathbb{Z}.$ Now we use  \eqref{H3} and $\gamma=N/p^\oplus_\Phi=N/p^+_\Psi>0$ to yield
\begin{align*}
    \Psi\left(2^{-k\gamma}|(u)_{A_{k+1}}|\right)&=\Psi\left(2^\gamma\cdot 2^{-(k+1)\gamma}|(u)_{A_{k+1}}|\right)\nonumber\\&\leq  2^{\gamma p^+_\Psi}\Psi\left(2^{ -(k+1)\gamma}|(u)_{A_{k+1}}|\right)=  2^N\Psi\left(2^{ -(k+1)\gamma}|(u)_{A_{k+1}}|\right).
\end{align*} 
Moreover, by Lemma \ref{Lemma-1-2}, we have
\begin{align*}
     \Psi\left(2^{-k\gamma}\left|(u)_{A_k}- (u)_{A_{k+1}}\right|\right)\leq \frac{C_2}{2^{kN}}\int_{A_k\cup A_{k+1}}\int_{A_k\cup A_{k+1}}\Psi(|x|^{\alpha_1}|y|^{\alpha_2}|D_su(x,y)|)d\mu,
  \end{align*}  
where $C_2$ is a positive constant (independent of both $u$ and $k$). 
By combining the above three inequalities, for every $\lambda\in (1,2^{p^+_\Psi})$ and $k\in \mathbb{Z}$ we obtain
\begin{multline}\label{log-eqn4*}
\Psi\left(2^{-k\gamma}|(u)_{A_k}|\right)\leq 2^N\lambda\Psi\left(2^{-(k+1)\gamma}|(u)_{A_{k+1}}|\right) \\+ \frac{C_1C_2}{2^{kN}\left(\lambda-1\right)^{p^+_\Psi-1}}\int_{A_k\cup A_{k+1}}\int_{A_k\cup A_{k+1}}\Psi(|x|^{\alpha_1}|y|^{\alpha_2}|D_su(x,y)|)d\mu.
\end{multline}
 Now, for each $k\in \mathbb{Z}^-$, we choose 
 $$\lambda_k=\left(\frac{-k}{-k-1/2}\right)^{p^+_\Psi-1}.$$ For this choice of $\lambda_k$, one can verify that $$\lambda_k\in (1,2^{p^+_\Psi}) \text{ and }\frac{1}{(\lambda_k-1)}\asymp -k,\;\;\;\forall\,k\in \mathbb{Z}^-.$$
Thus, from \eqref{log-eqn4*}, for every $k\in \mathbb{Z}^-$ we obtain
\begin{multline*}
    \Psi\left(2^{-k\gamma}|(u)_{A_k}|\right)\leq 2^N\left(\frac{-k}{-k-1/2}\right)^{p^+_\Psi-1}\Psi\left(2^{-(k+1)\gamma}|(u)_{A_{k+1}}|\right)\\+C_3 \frac{(-k)^{p^+_\Psi-1}}{2^{kN}}  \int_{A_k\cup A_{k+1}}\int_{A_k\cup A_{k+1}}\Psi(|x|^{\alpha_1}|y|^{\alpha_2}|D_su(x,y)|)d\mu,
\end{multline*}
for some $C_3>0$.
This gives
\begin{multline*}
\frac{2^{kN}}{(-k)^{p^+_\Psi-1}}\Psi\left(2^{-k\gamma}|(u)_{A_k}|\right)\leq \frac{2^{(k+1)N}}{(-k-1/2)^{p^+_\Psi-1}}\Psi\left(2^{ -(k+1)\gamma}|(u)_{A_{k+1}}|\right)\\+C_3\int_{A_k\cup A_{k+1}}\int_{A_k\cup A_{k+1}}\Psi(|x|^{\alpha_1}|y|^{\alpha_2}|D_su(x,y)|)d\mu.
\end{multline*}
Summing the above inequalities from $m\in \mathbb{Z}^-$ to $-1,$  we obtain 
\begin{multline}\label{log-eqn2*}
\sum_{k=m}^{-1}\frac{2^{kN}}{(-k)^{p^+_\Psi-1}}\Psi\left(2^{-k\gamma}|(u)_{A_k}|\right)\leq \sum_{k=m}^{-1} \frac{2^{(k+1)N}}{(-k-1/2)^{p^+_\Psi-1}}\Psi\left(2^{ -(k+1)\gamma}|(u)_{A_{k+1}}|\right)\\+C_3\int_{\mathbb{R}^N}\int_{\mathbb{R}^N}\Psi\left(|x|^{\alpha_1}|y|^{\alpha_2}|D_su(x,y)|\right)d\mu.
\end{multline}
Since $u\equiv 0$ on $A_{0}$, we have $(u)_{A_0}=0$. Thus, by re-indexing, we get 
\begin{align*}
  \sum_{k=m}^{-1}\frac{2^{(k+1)N}}{(-k-1/2)^{p^+_\Psi-1}}\Psi\left(2^{ -(k+1)\gamma}|(u)_{A_{k+1}}|\right)&=\sum_{k=m+1}^{-1}\frac{2^{kN}}{(-k+1/2)^{p^+_\Psi-1}}\Psi\left(2^{-k\gamma}|(u)_{A_k}|\right)\\&\leq \sum_{k=m}^{-1}\frac{2^{kN}}{(-k+1/2)^{p^+_\Psi-1}}\Psi\left(2^{-k\gamma}|(u)_{A_k}|\right).  
\end{align*}
Therefore,  from \eqref{log-eqn2*} we get
\begin{multline*}
   \sum_{k=m}^{-1}\left\{\frac{2^{kN}}{(-k)^{p^+_\Psi-1}}-\frac{2^{kN}}{(-k+1/2)^{p^+_\Psi-1}}\right\}\Psi\left(2^{-k\gamma}|(u)_{A_k}|\right)\\\leq C_3\int_{\mathbb{R}^N}\int_{\mathbb{R}^N}\Psi\left(|x|^{\alpha_1}|y|^{\alpha_2}|D_su(x,y)|\right)d\mu.
\end{multline*}
Now, for  $k\in\mathbb{Z}^-$, we have
\begin{equation*}
    \frac{1}{(-k)^{p^+_\Psi-1}}-\frac{1}{(-k+1/2)^{p^+_\Psi-1}}\asymp  \frac{1}{(-k)^{p^+_\Psi}}.
\end{equation*}
Consequently, from \eqref{log-eq1*} we obtain
\begin{align*}
 \int_{\{2^mR\le|x|<R\}}\frac{\Psi\left(|x|^{-\gamma}|u(x)|\right)}{\big(\log(2R/|x|)\big)^{p^+_\Psi}}dx\leq C_4\int_{\mathbb{R}^N}\int_{\mathbb{R}^N}\Psi\left(|x|^{\alpha_1}|y|^{\alpha_2}|D_su(x,y)|\right)d\mu,
\end{align*}
where $C_4$ is a positive constant  independent of both $u$ and $m$. Hence, the required inequality 
follows by taking $m\to-\infty$ and using $\Phi\asymp\Psi$ and $p^+_\Phi=p_\Phi^\oplus$.
\smallskip

\noindent $(ii)$ Given that $p^\ominus_\Phi$ is attained at $\Psi\asymp\Phi$ and $\gamma=N/p^\ominus_\Phi$. Thus, $p^\ominus_\Phi=p_\Psi^-$ and hence $\gamma=N/p_\Psi^-.$
  Let $u\in \mathcal{C}^1_c(\mathbb{R}^N)$ such that $supp(u)\subset B_R(0)^c$. 
  Choose $n_0\in \mathbb{N}$  such that $supp(u)\subset B_{2^{n_0+1}R}(0).$ Notice that, $$B_R(0)^c=\bigcup_{k=0}^{\infty}A_k.$$ Now for $x\in A_k$ with $k\in \mathbb{N}\cup \{0\},$ we have $$\log(2|x|/R)\ge (k+1)\log 2\ge \log 2.$$  
Therefore, 
\begin{equation*}
\int_{A_k}\frac{\Psi\left(|x|^{-\gamma}|u(x)|\right)}{\big(\log(2|x|/R)\big)^{p^+_\Psi}}dx\leq \frac{1}{(k+1)^{p^+_\Psi}(\log 2)^{p^+_\Psi}}\int_{A_k}\Psi\left(\frac{|u(x)|}{|x|^{\gamma}}\right)dx.
\end{equation*}
Moreover, by Lemma \ref{Lemma-1-2}, we have
\begin{align*}
\int_{A_k}\Psi\left(\frac{|u(x)|}{|x|^{\gamma}}\right)dx \leq C\left\{2^{kN}\Psi\left(2^{-k\gamma}|(u)_{A_k}|\right)+ \int_{A_k}\int_{A_k}\Psi\left(|x|^{\alpha_1}|y|^{\alpha_2}|D_su(x,y)|\right)d\mu \right\}.
\end{align*} 
Thus, for $k\in \mathbb{N}\cup \{0\},$ we obtain
\begin{align*}
       \int_{A_k}\frac{\Psi\left(|x|^{-\gamma}|u(x)|\right)}{\big(\log(2|x|/R)\big)^{p^+_\Psi}}\le\frac{C2^{kN}}{(k+1)^{p^+_\Psi}}\Psi\left(2^{-k\gamma}|(u)_{A_k}|\right)+ C\int_{A_k}\int_{A_k}\Psi\left(|x|^{\alpha_1}|y|^{\alpha_2}|D_su(x,y)|\right)d\mu,
\end{align*} 
where $C$ is a positive constant  (independent of both $u$ and $k$). Summing the above inequalities from $0$ to $n_0$, we get
\begin{multline}
     \int_{B_R(0)^c}\frac{\Psi\left(|x|^{-\gamma}|u(x)|\right)}{\big(\log(2|x|/R)\big)^{p^+_\Psi}}dx=\sum_{k=0}^{n_0}\frac{\Psi\left(|x|^{-\gamma}|u(x)|\right)}{\big(\log(2|x|/R)\big)^{p^+_\Psi}}dx \leq C\sum_{k=0}^{n_0}\frac{2^{kN}}{(k+1)^{p^+_\Psi}}\Psi\left(2^{-k\gamma}|(u)_{A_k}|\right) \\+ C\int_{B_R(0)^c}\int_{B_R(0)^c}\Psi\left(|x|^{\alpha_1}|y|^{\alpha_2}|D_su(x,y)|\right)d\mu.\label{log-eq3}
\end{multline} 
Next, we estimate the first term on the right-hand side of \eqref{log-eq3}.  
By triangular inequality and Lemma \ref{Lem-H} with $\Lambda=2^{p^+_\Psi}$, there exists a positive constant $C_1=C_1(\Psi,\Lambda)$ such that for every $\lambda\in (1,2^{p^+_\Psi})$ and $k\in \mathbb{Z},$
\begin{align*}
\Psi\left(2^{-k\gamma}|(u)_{A_k}|\right)&\leq  \Psi\left(2^{-k\gamma}|(u)_{A_{k-1}}|+2^{-k\gamma}\left|(u)_{A_{k-1}}- (u)_{A_k}\right|\right)\\\leq &\lambda\Psi\left(2^{-k\gamma}|(u)_{A_{k-1}}|\right)+\frac{C_1}{\left(\lambda-1\right)^{p^+_\Psi-1}}\Psi\left(2^{-k\gamma}\left|(u)_{A_{k-1}}-(u)_{A_k}\right|\right).
\end{align*}
Since  $\gamma=N/p^-_\Psi>0$, using  \eqref{H3}, we observe that,
\begin{align*}
    \Psi\left(2^{-k\gamma}|(u)_{A_{k-1}}|\right)= \Psi\left(2^{-N/p^-_\Psi}\cdot 2^{ -(k-1)\gamma}|(u)_{A_{k-1}}|\right)\le  2^{-N}\Psi\left(2^{ -(k-1)\gamma}|(u)_{A_{k-1}}|\right).
\end{align*} 
Furthermore, applying Lemma \ref{Lemma-1-2} (replacing $R$ by $R/2$), we get a constant $C_2>0$  such that 
\begin{align*}
\Psi\left(2^{-k\gamma}\left|(u)_{A_{k-1}}- (u)_{A_k}\right|\right)&\leq \frac{C_2}{2^{kN}}\int_{A_{k-1}\cup A_k}\int_{A_{k-1}\cup A_k}\Psi\left(|x|^{\alpha_1}|y|^{\alpha_2}|D_su(x,y)|\right)d\mu.
  \end{align*}  
 By combining the above three inequalities, for every $\lambda\in (1,2^{p^+_\Psi})$ and $k\in \mathbb{Z}$ we obtain
\begin{multline}\label{log-eqn4**}
\Psi\left(2^{-k\gamma}|(u)_{A_k}|\right)\leq \lambda 2^{-N}\Psi\left(2^{-(k-1)\gamma}|(u)_{A_{k-1}}|\right) \\+ \frac{C_1C_2}{2^{kN}\left(\lambda-1\right)^{p^+_\Psi-1}}\int_{A_{k-1}\cup A_k}\int_{A_{k-1}\cup A_k}\Psi\left(|x|^{\alpha_1}|y|^{\alpha_2}|D_su(x,y)|\right)d\mu.
\end{multline}
Now, for each $k\in \mathbb{N}\cup \{0\}$, we choose
 $$\lambda_k=\left(\frac{k+1}{k+1/2}\right)^{p^+_\Psi-1}.$$ Then, one can verify that   $$\lambda_k\in (1,2^{p^+_\Psi})\;\:\text{and}\;\;\frac{1}{(\lambda_k-1)}\asymp (k+1),\;\;\;\forall\,k\in \mathbb{N}\cup \{0\}.$$
Consequently,  from \eqref{log-eqn4**}  we get
\begin{multline*}
\Psi\left(2^{ -k\gamma}|(u)_{A_k}|\right)\leq 2^{-N}\left(\frac{k+1}{k+1/2}\right)^{p^+_\Psi-1}\Psi\left(2^{-(k-1)\gamma}|(u)_{A_{k-1}}|\right)\\+C_3 \frac{(k+1)^{p^+_\Psi-1}}{2^{kN}}  \int_{A_{k-1}\cup A_k}\int_{A_{k-1}\cup A_k}\Psi\left(|x|^{\alpha_1}|y|^{\alpha_2}|D_su(x,y)|\right)d\mu,\;\;k\ge 0,
\end{multline*}
for some $C_3>0.$
This yields
\begin{multline*}
\frac{2^{kN}}{(k+1)^{p^+_\Psi-1}}\Psi\left(2^{ -k\gamma}|(u)_{A_k}|\right)\leq \frac{2^{(k-1)N}}{(k+1/2)^{p^+_\Psi-1}}\Psi\left(2^{-(k-1)\gamma}|(u)_{A_{k-1}}|\right)\\+C_3\int_{A_{k-1}\cup A_k}\int_{A_{k-1}\cup A_k}\Psi\left(|x|^{\alpha_1}|y|^{\alpha_2}|D_su(x,y)|\right)d\mu.
\end{multline*}
We sum up the above inequalities to obtain
\begin{multline}\label{log-eqn5}
\sum_{k=0}^{n_0}\frac{2^{kN}}{(k+1)^{p^+_\Psi-1}}\Psi\left(2^{-k\gamma}|(u)_{A_k}|\right)\leq \sum_{k=0}^{n_0} \frac{2^{(k-1)N}}{(k+1/2)^{p^+_\Psi-1}}\Psi\left(2^{ -(k-1)\gamma}|(u)_{A_{k-1}}|\right)\\+C_3\int_{\mathbb{R}^N}\int_{\mathbb{R}^N}\Psi\left(|x|^{\alpha_1}|y|^{\alpha_2}|D_su(x,y)|\right)d\mu.
\end{multline}
Since $u\equiv 0$ on $A_{-1}$, we have $(u)_{A_{-1}}=0$. Thus, by re-indexing, we get
\begin{align*}
    \sum_{k=0}^{n_0}\frac{2^{(k-1)N}}{(k+1/2)^{p^+_\Psi-1}}\Psi\left(2^{-(k-1)\gamma}|(u)_{A_{k-1}}|\right)&=\sum_{k=0}^{n_0-1}\frac{2^{kN}}{(k+3/2)^{p^+_\Psi-1}}\Psi\left(2^{-k\gamma}|(u)_{A_k}|\right)\\&\leq \sum_{k=0}^{n_0}\frac{2^{kN}}{(k+3/2)^{p^+_\Psi-1}}\Psi\left(2^{-k\gamma}|(u)_{A_k}|\right).
\end{align*} 
Therefore,  from \eqref{log-eqn5}  we get 
\begin{multline*}
     \sum_{k=0}^{n_0}\left\{\frac{2^{kN}}{(k+1)^{p^+_\Psi-1}}-\frac{2^{kN}}{(k+3/2)^{p^+_\Psi-1}}\right\}\Psi\left(2^{-k\gamma}|(u)_{A_k}|\right)\\\leq C_3\int_{\mathbb{R}^N}\int_{\mathbb{R}^N}\Psi\left(|x|^{\alpha_1}|y|^{\alpha_2}|D_su(x,y)|\right)d\mu.
\end{multline*}
Now for each $k\in \mathbb{Z}$ with $k\ge0$, we have
\begin{equation*}
    \frac{1}{(k+1)^{p^+_\Psi-1}}-\frac{1}{(k+3/2)^{p^+_\Psi-1}}\asymp  \frac{1}{(k+1)^{p^+_\Psi}}.
\end{equation*}
 Thus, from  \eqref{log-eq3} we obtain
\begin{align*}
    \int_{\mathbb{R}^N}\frac{\Psi\left(|x|^{-\gamma}|u(x)|\right)}{\big(\log(2|x|/R)\big)^{p^+_\Psi}}dx\leq C_4\int_{\mathbb{R}^N}\int_{\mathbb{R}^N}\Psi\left(|x|^{\alpha_1}|y|^{\alpha_2}|D_su(x,y)|\right)d\mu,
\end{align*}
where $C_4$ is a positive constant independent of $u$. Hence the result follows from the above inequality by using $\Phi\asymp\Psi$. \qed
\smallskip

   \section{Weighted Orlicz-Hardy inequalities in local case} 
In this section, we establish the analogue of Theorem \ref{Orlicz-Hardy}, Theorem \ref{Orlicz-Hardy2}, and Theorem \ref{log-orlicz} for $s=1$. Recall that the proof of key lemma (Lemma \ref{Lemma-1-2}) that we used for proving these theorems required the fractional Orlicz-Poincar\'e-Wirtinger inequality (see Proposition \ref{lem-orlicz}). Thus, first, we establish a local version of 
Orlicz-Poincar\'e-Wirtinger inequality. Our proof follows the same lines as in the proof of classical Poincar\'e-Wirtinger inequality obtained by Evans (\cite[Theorem 1, Page 275]{Evans1998}).

\begin{prop}(\textbf{Orlicz-Poincar\'e-Wirtinger inequality}): Let $\Omega$ be a bounded Lipschitz domain in $\mathbb{R}^N,\,N\ge 1$. Let $\Omega_\lambda=\{\lambda x:x\in \Omega\},\,\lambda>0$. Then, for an Orlicz function  $\Phi$, there exists $C=C(\Phi,\Omega)>0$  so that
\begin{equation}\label{poincare2}
     \int_{\Omega_\lambda}\Phi(|u(x)-(u)_{\Omega_\lambda}|)dx\leq C \int_{\Omega_\lambda}\Phi\left(\lambda|\nabla u(x)|\right)dx,\;\;\; \forall\, u\in W^{1,\Phi}(\Omega_\lambda).
\end{equation}
\end{prop}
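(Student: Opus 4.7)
The plan is to follow Evans's classical compactness proof of the Poincar\'e--Wirtinger inequality in the Orlicz setting, after a change of variables to reduce the scaling parameter $\lambda$. Setting $y=x/\lambda$ and $v(y):=u(\lambda y)$, one has $\nabla v(y)=\lambda(\nabla u)(\lambda y)$, $(v)_\Omega=(u)_{\Omega_\lambda}$, and the Jacobian $dx=\lambda^N\,dy$ gives
\[
\int_{\Omega_\lambda}\Phi(|u-(u)_{\Omega_\lambda}|)\,dx=\lambda^N\!\int_\Omega\Phi(|v-(v)_\Omega|)\,dy,\qquad \int_{\Omega_\lambda}\Phi(\lambda|\nabla u|)\,dx=\lambda^N\!\int_\Omega\Phi(|\nabla v|)\,dy,
\]
so the $\lambda^N$ factors cancel and it suffices to prove $\int_\Omega\Phi(|v-(v)_\Omega|)\,dy\le C\int_\Omega\Phi(|\nabla v|)\,dy$ for all $v\in W^{1,\Phi}(\Omega)$.

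For the reduced case, I would argue by contradiction. Were the inequality to fail, there would exist $v_k\in W^{1,\Phi}(\Omega)$ with $(v_k)_\Omega=0$ (subtracting the mean does not change $\nabla v_k$) and $\int\Phi(|v_k|)>k\int\Phi(|\nabla v_k|)$. By continuity of the modular together with the $\Delta_2$ condition, pick $c_k>0$ such that $w_k:=v_k/c_k$ satisfies $\int\Phi(|w_k|)=1$; we still have $(w_k)_\Omega=0$. Transferring the hypothesis through this rescaling via \eqref{H3} yields $\int\Phi(|\nabla w_k|)\to 0$ (at least when $c_k$ stays in a bounded range of $(0,\infty)$), and hence $\|\nabla w_k\|_{L^\Phi}\to 0$ by $\Delta_2$. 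The normalization $\int\Phi(|w_k|)=1$ makes $\{w_k\}$ bounded in $L^\Phi(\Omega)$, so $\{w_k\}$ is bounded in $W^{1,\Phi}(\Omega)$. The compact embedding $W^{1,\Phi}(\Omega)\hookrightarrow L^\Phi(\Omega)$ on bounded Lipschitz domains (valid under $\Delta_2$) then produces a subsequence $w_k\to w$ in $L^\Phi$; since $\nabla w=0$ distributionally and $(w)_\Omega=0$, we obtain $w\equiv 0$. But by $\Delta_2$ this forces $\int\Phi(|w_k|)\to 0$, contradicting $\int\Phi(|w_k|)=1$.

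The main obstacle is the rescaling step: because $\Phi$ is not homogeneous, the failure hypothesis is not scale invariant and the factor $c_k$ may be unbounded. In that generality, \eqref{H3} only gives $\int\Phi(|\nabla w_k|)\le \max\{c_k^{\,p_\Phi^+-p_\Phi^-},\,c_k^{-(p_\Phi^+-p_\Phi^-)}\}/k$, so one needs either a case analysis on $c_k$ (treating $c_k\to 0$ and $c_k\to\infty$ separately, and exchanging the bad sequence for one of bounded scale via a secondary rescaling), or a two-step approach in which one first proves the scale-invariant norm version $\|v-(v)_\Omega\|_{L^\Phi}\le C\|\nabla v\|_{L^\Phi}$ by the same compactness argument and then upgrades it to the modular form using $\Delta_2$ and \eqref{H3}. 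This conversion from norm to modular is the heart of the Orlicz adaptation, and is trivial in the classical $L^p$ case where norm and modular are power-related.
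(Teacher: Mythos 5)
Your change of variables and the overall compactness-by-contradiction scheme coincide with the paper's proof, but the proposal stalls exactly at the decisive step, and you say so yourself: after normalizing the contradicting sequence at the modular level, $\int_\Omega\Phi(|w_k|)\,dy=1$ with $w_k=v_k/c_k$, the failure hypothesis only transfers up to the factor $\max\{c_k^{p_\Phi^+-p_\Phi^-},c_k^{-(p_\Phi^+-p_\Phi^-)}\}$, and without a bound on $c_k$ you never obtain $\int_\Omega\Phi(|\nabla w_k|)\,dy\to0$, so the compact-embedding argument cannot even start. Neither of your two suggested repairs is carried out, and the second one does not work as described: if you first prove the norm inequality $\|v-(v)_\Omega\|_{L^\Phi(\Omega)}\le C\|\nabla v\|_{L^\Phi(\Omega)}$ and try to upgrade it to the modular form via \eqref{H3} and $\Delta_2$, then with $t:=\|\nabla v\|_{L^\Phi(\Omega)}$ you only get $\int_\Omega\Phi(|v-(v)_\Omega|)\,dy\le\max\{(Ct)^{p_\Phi^-},(Ct)^{p_\Phi^+}\}$ against $\int_\Omega\Phi(|\nabla v|)\,dy\ge\min\{t^{p_\Phi^-},t^{p_\Phi^+}\}$, and the resulting constant degenerates like $t^{\pm(p_\Phi^+-p_\Phi^-)}$ as $t\to0$ or $t\to\infty$ whenever $p_\Phi^-<p_\Phi^+$. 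The modular inequality is genuinely stronger than the norm inequality, so ``upgrade by scaling'' is not available; this is the same non-homogeneity loss you met in the first attempt, merely relocated.

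For comparison, the paper avoids the modular normalization altogether: from the failure of \eqref{poincare3} it uses $n\Phi(t)\ge\Phi(n^{1/p_\Phi^+}t)$ to pass to the norm-level statement \eqref{eq1-poin}, then sets $v_n=(u_n-(u_n)_\Omega)/\|u_n-(u_n)_\Omega\|_{L^\Phi(\Omega)}$, so that $\|v_n\|_{L^\Phi(\Omega)}=1$ and $\|\nabla v_n\|_{L^\Phi(\Omega)}\le n^{-1/p_\Phi^+}$; because the Luxemburg norm is homogeneous, no scale factor $c_k$ ever appears afterwards, and the final contradiction is with $\|v\|_{L^\Phi(\Omega)}=1$ rather than with a modular normalization. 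This norm-level normalization is precisely the idea missing from your sketch. Be aware, though, that the passage from the modular hypothesis to \eqref{eq1-poin} is itself the delicate point (a modular comparison between two different functions does not in general yield the corresponding norm comparison when $p_\Phi^-<p_\Phi^+$), so if you complete the argument along these lines that step deserves a careful justification; alternatively, one can bypass compactness entirely on a bounded Lipschitz domain by the representation $|u(x)-(u)_\Omega|\le C\int_\Omega|\nabla u(y)|\,|x-y|^{1-N}\,dy$ together with Jensen's inequality, in the spirit of the proof of Proposition \ref{lem-orlicz}.
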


\begin{proof}
By changing the variable $y= \lambda x$ we can see that \eqref{poincare2} holds if and only if 
\begin{equation}\label{poincare3}
     \int_{\Omega}\Phi(|u(x)-(u)_{\Omega}|)dx\leq C \int_{\Omega}\Phi\left(|\nabla u(x)|\right)dx,\;\;\; \forall\, u\in W^{1,\Phi}(\Omega).
\end{equation}
Thus, it is enough to prove \eqref{poincare3}. We use the method of contradiction. Assume that \eqref{poincare3} does not hold. Then for each $n\in \mathbb{N}$, there exists   $u_n\in W^{1,\Phi}(\Omega)$ satisfying 
\begin{equation*}
     \int_{\Omega}\Phi(|u_n(x)-(u_n)_\Omega|)dx\geq n \int_{\Omega}\Phi\left(|\nabla u_n(x)|\right)dx.
\end{equation*}
Since $n\Phi(t)\geq \Phi(n^{1/p^+_\Phi}t)$ for every $t\in [0,\infty)$ (see \eqref{H3}), the definition of the Luxemburg norm gives
\begin{equation}\label{eq1-poin}
    \|u_n-(u_n)_\Omega\|_{L^\Phi(\Omega)}\geq n^{1/p^+_\Phi}\|\nabla u_n\|_{L^\Phi(\Omega)}.
\end{equation}
Set $$v_n:=\frac{u_n-(u_n)_\Omega}{\|u_n-(u_n)_\Omega\|_{L^\Phi(\Omega)}},\;\;\;n\in \mathbb{N}.$$ Then, one can verify that 
\begin{equation}\label{poi-eqn2}
    (v_n)_\Omega=0,\;\;\;\|v_n\|_{L^\Phi(\Omega)}=1,\;\;\; n\in \mathbb{N}.
\end{equation}
Therefore, \eqref{eq1-poin} gives
\begin{equation}\label{poi-eq3}
    \|\nabla v_n\|_{L^\Phi(\Omega)}\leq \frac{1}{n^{1/p^+_\Phi}}.
\end{equation}
Thus, $(v_n)$ is a bounded sequence in $W^{1,\Phi}(\Omega)$, and hence by the compactness of the embedding $W^{1,\Phi}(\Omega)\hookrightarrow L^\Phi(\Omega)$ (see \cite[Theorem 8.32, Theorem 3.35]{Adams1975}), we get a sub-sequence, for the simplicity we denoted by $(v_{n})$ itself, and  $v\in L^\Phi(\Omega)$ such that
$v_{n}\to v\,\text{in}\,L^\Phi(\Omega).$ Since 
$\Omega$ is bounded, we also have  $v_{n}\to v$ in $L^1(\Omega)$.
Therefore, from \eqref{poi-eqn2} it follows that 
\begin{equation}\label{poi-eq4}
   (v)_\Omega=0,\;\;\; \|v\|_{L^\Phi(\Omega)}=1.
\end{equation}
  Next we show that  $\frac{\partial v}{dx_i}=0$ (the distributional derivative), for $i\in \{1,\dots,N\}$. For
 $w\in \mathcal{C}_c^\infty(\Omega)$, we have
\begin{equation*}
 \left\langle \frac{\partial v}{dx_i}, w\right\rangle=-   \int_\Omega v\frac{\partial w}{dx_i}dx=-\lim_{n\to \infty}\int_\Omega v_n\frac{\partial w}{dx_i}dx=\lim_{n\to \infty}\int_\Omega \frac{\partial v_n}{dx_i}wdx.
\end{equation*}
Further, the H\"older inequality for the Orlicz function (see \cite[Page 234]{Adams1975}) gives
$$\left|\int_\Omega \frac{\partial v_n}{dx_i}wdx\right|\leq \int_\Omega |\nabla v_n||w|dx\le 2\|\nabla v_n\|_{L^\Phi(\Omega)}\|w\|_{L^{\tilde{\Phi}}(\Omega)},$$ where $\tilde{\Phi}$ is the complementary Orlicz function to $\Phi.$ 
Consequently, by \eqref{poi-eq3} we get 
$$\left\langle \frac{\partial v}{dx_i}, w\right\rangle=0,\;\;\;\forall\,w\in \mathcal{C}_c^\infty(\Omega),\;\;i\in\{1,\dots,N\}.$$ Therefore,
$v\in W^{1,\Phi}(\Omega)$ with $\nabla v=0$ a.e. in $\Omega$. By the connectedness of $\Omega$,  $u$ must be a constant in $\Omega$. A contradiction to \eqref{poi-eq4}. 
\end{proof}

\smallskip

Now, we state the local analogue of Theorem \ref{Orlicz-Hardy}, Theorem \ref{Orlicz-Hardy2}, and Theorem \ref{log-orlicz}. The proof follows directly from the approaches used in the proofs of Theorem \ref{Orlicz-Hardy}, Theorem \ref{Orlicz-Hardy2}, and Theorem \ref{log-orlicz}, respectively. So, we omit the proof here.
\begin{thm}\label{Orlicz-Hardy*} Let $N\geq 1,\,R>0$, $\alpha\in \mathbb{R}$,  and let $\gamma:=1-\alpha$. For an Orlicz function $\Phi,$  
 \begin{enumerate}[(i)]
\item  if $\gamma<N/p^\oplus_\Phi$, then
 \begin{equation}\label{Hardy*}
    \int_{\mathbb{R}^N}\Phi\left(\frac{|u(x)|}{|x|^\gamma}\right)dx\leq C\int_{\mathbb{R}^N}\Phi\left(|x|^{\alpha}|\nabla u(x)|\right)dx,\;\;\;\forall\,u\in \mathcal{C}^1_c(\mathbb{R}^N),
\end{equation}

\item if $\gamma>N/p^\ominus_\Phi$, then
  \begin{equation*}
    \int_{\mathbb{R}^N}\Phi\left(\frac{|u(x)|}{|x|^\gamma}\right)dx\leq C\int_{\mathbb{R}^N}\Phi\left(|x|^{\alpha}|\nabla u(x)|\right)dx,\;\;\;\forall\, u\in \mathcal{C}^1_c(\mathbb{R}^N\setminus\{0\}),
\end{equation*}

\item if $p^\oplus_\Phi$ is attained  and $\gamma=N/p^\oplus_\Phi$, then for every $u\in \mathcal{C}_c^1(\mathbb{R}^N)$ with $supp(u)\subset B_R(0),$  
 \begin{equation*}
           \int_{B_R(0)}\frac{\Phi(|x|^{-\gamma}|u|)}{\big(\log(2R/|x|)\big)^{p^\oplus_\Phi}}dx\leq C\int_{B_R(0)}\Phi\left(|x|^{\alpha}|\nabla u(x)|\right)dx,
        \end{equation*}

\item if $p^\ominus_\Phi$ is attained at $\Psi\asymp\Phi$ and $\gamma=N/p^\ominus_\Phi$, then for every $u\in \mathcal{C}_c^1(\mathbb{R}^N)$ with $supp(u)\subset B_R(0)^c,$
        \begin{equation*}
           \int_{B_R(0)^c}\frac{\Phi(|x|^{-\gamma}|u|)}{\big(\log(2|x|/R)\big)^{p^+_\Psi}}dx\leq C\int_{B_R(0)^c}\Phi\left(|x|^{\alpha}|\nabla u(x)|\right)dx,
        \end{equation*}
        \end{enumerate}
      where  $C$ is a positive constant
independent of $u$.
If $\gamma> N/p^\ominus_\Phi$, then
        \eqref{Hardy*} fails.  Furthermore, if $p^\ominus_\Phi$ is attained, then \eqref{Hardy*}
fails also for $\gamma= N/p^\ominus_\Phi$.

\end{thm}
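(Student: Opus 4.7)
The plan is to mirror the dyadic-decomposition strategy used for the fractional theorems, with the fractional Poincaré–Wirtinger estimate replaced by the local one just established in the excerpt. All four parts reduce to a single local analogue of Lemma \ref{Lemma-1-2}, after which the proofs of Theorem \ref{Orlicz-Hardy}, Theorem \ref{Orlicz-Hardy2}, and Theorem \ref{log-orlicz} transfer essentially verbatim.

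\textbf{Step 1: A local version of Lemma \ref{Lemma-1-2}.} With $A_k=A_k(R)=\{2^kR\le |x|<2^{k+1}R\}$, apply the Orlicz–Poincaré–Wirtinger inequality \eqref{poincare2} to $\Omega=\{R<|x|<2R\}$ with $\lambda=2^k$ (noting $(u)_{\Omega_\lambda}=(u)_{A_k}$) to obtain
\begin{equation*}
\int_{A_k}\Phi(|u-(u)_{A_k}|)\,dx\le C\int_{A_k}\Phi\bigl(2^k|\nabla u|\bigr)\,dx,
\end{equation*}
and similarly on $A_k\cup A_{k+1}$ using $\Omega=\{R<|x|<4R\}$. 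Combining with \eqref{H2}, replacing $u$ by $2^{-k\gamma}u$, and dominating $2^{k(1-\gamma)}=2^{k\alpha}$ by $|x|^\alpha$ via \eqref{H3} (just as in \eqref{eqn-dom}), one obtains for every $u\in \mathcal{C}_c^1(\mathbb{R}^N)$:
\begin{equation*}
\int_{A_k}\Phi\!\left(\frac{|u(x)|}{|x|^\gamma}\right)dx\le C\,2^{kN}\Phi\bigl(2^{-k\gamma}|(u)_{A_k}|\bigr)+C\int_{A_k}\Phi(|x|^\alpha|\nabla u|)\,dx,
\end{equation*}
\begin{equation*}
\Phi\bigl(2^{-k\gamma}|(u)_{A_k}-(u)_{A_{k+1}}|\bigr)\le \frac{C}{2^{kN}}\int_{A_k\cup A_{k+1}}\Phi(|x|^\alpha|\nabla u|)\,dx.
\end{equation*}

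\textbf{Step 2: Proof of (i) and (ii).} I would first establish the weaker statements: if $\gamma<N/p_\Phi^+$ then \eqref{Hardy*} holds; if $\gamma>N/p_\Phi^-$ then the corresponding inequality on $\mathcal{C}_c^1(\mathbb{R}^N\setminus\{0\})$ holds. For (i), fix $n_0$ with $\text{supp}(u)\subset B_{2^{n_0+1}}$ and $m<n_0$; sum the first inequality of Step 1 over $k=m,\dots,n_0$. To control $\sum 2^{kN}\Phi(2^{-k\gamma}|(u)_{A_k}|)$, apply Lemma \ref{Lem-H} to
$|(u)_{A_k}|\le |(u)_{A_{k+1}}|+|(u)_{A_k}-(u)_{A_{k+1}}|$
with $\lambda=\Lambda$, then use \eqref{H3} to move from index $k$ to $k+1$, and absorb the first term using the second inequality of Step 1. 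Choosing $\Lambda$ close enough to $1$ gives $\Lambda 2^{-N}\max\{2^{\gamma p_\Phi^-},2^{\gamma p_\Phi^+}\}<1$ (here one uses $\gamma<N/p_\Phi^+$ for $\gamma>0$, and $\gamma<0<N/p_\Phi^-$ otherwise), so the geometric tail closes and letting $m\to-\infty$ yields \eqref{Hardy*}. Then (i) and (ii) follow by passing to a $\Psi\asymp\Phi$ attaining $p_\Phi^\oplus$ (resp.\ $p_\Phi^\ominus$) arbitrarily well, exactly as in the proofs of Theorem \ref{Orlicz-Hardy} and Theorem \ref{Orlicz-Hardy2}.

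\textbf{Step 3: Parts (iii) and (iv) — logarithmic corrections.} These mirror the proof of Theorem \ref{log-orlicz}. For (iii), $\text{supp}(u)\subset B_R(0)$, so $B_R(0)=\bigcup_{k\in\mathbb{Z}^-}A_k$; on $A_k$ one has $\log(2R/|x|)\asymp(-k)$. Bound $\int_{A_k}\Phi(|x|^{-\gamma}|u|)/(\log(2R/|x|))^{p_\Psi^+}$ using Step 1, then sum with the crucial choice $\lambda_k=((-k)/(-k-1/2))^{p_\Psi^+-1}$ in Lemma \ref{Lem-H}, which yields the telescoping identity
$\frac{1}{(-k)^{p_\Psi^+-1}}-\frac{1}{(-k+1/2)^{p_\Psi^+-1}}\asymp \frac{1}{(-k)^{p_\Psi^+}}$.
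Since $\gamma=N/p_\Psi^+$ gives $2^{\gamma p_\Psi^+}=2^N$ so the $2^{kN}$ factors exactly telescope, and using $(u)_{A_0}=0$ by re-indexing yields the result after $m\to-\infty$. Part (iv) is the exterior analogue with $\lambda_k=((k+1)/(k+1/2))^{p_\Psi^+-1}$ and $(u)_{A_{-1}}=0$.

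\textbf{Step 4: Sharpness of the failures.} For $\gamma\ge N/p_\Phi^\ominus$ with $p_\Phi^\ominus$ attained at $\Psi$, pick $u\in \mathcal{C}_c^1(\mathbb{R}^N)$ with $u\equiv 1$ on $B_1(0)$ and use \eqref{H3} as in \eqref{eqn-fails} applied to $\Psi$ to show $\int_{B_1(0)}\Psi(|u|/|x|^\gamma)\,dx\ge \int_{B_1(0)}|x|^{-\gamma p_\Psi^-}\Psi(1)\,dx=\infty$; since $\Phi\asymp\Psi$, the left-hand side of \eqref{Hardy*} diverges while the right-hand side is finite. The strict case $\gamma>N/p_\Phi^\ominus$ is identical using a $\Psi\asymp\Phi$ with $p_\Psi^->N/\gamma$.

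The only real work is Step 1 — that is, verifying that every place the fractional proofs invoked Proposition \ref{lem-orlicz} and the double integral of $\Phi(|x|^{\alpha_1}|y|^{\alpha_2}|D_s u|)$, one can cleanly substitute the local Poincaré estimate \eqref{poincare2} and the single integral $\int\Phi(|x|^\alpha|\nabla u|)\,dx$. Once Step 1 is recorded, Steps 2–4 are formally identical to the fractional arguments and I would write ``arguing as in the proof of Theorem X, mutatis mutandis'' rather than repeating the summation bookkeeping.
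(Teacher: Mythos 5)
Your proposal is correct and follows exactly the route the paper intends: the paper in fact omits the proof of this theorem, stating only that it ``follows directly from the approaches used in the proofs'' of Theorems \ref{Orlicz-Hardy}--\ref{log-orlicz}, and your Step 1 --- the local analogue of Lemma \ref{Lemma-1-2} obtained by substituting the Orlicz--Poincar\'e--Wirtinger inequality \eqref{poincare2} for Proposition \ref{lem-orlicz} and noting $2^{k(1-\gamma)}=2^{k\alpha}\asymp|x|^{\alpha}$ on $A_k$ --- is precisely the bridge that makes that remark rigorous. Your Steps 2--4 then reproduce the paper's dyadic summation, absorption, logarithmic-correction, and non-integrability arguments essentially verbatim, so there is nothing to add.
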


\section{Concluding remarks and examples:}
In this section, we discuss some remarks related to Theorem \ref{Orlicz-Hardy}, Theorem \ref{Orlicz-Hardy2}, and Theorem \ref{log-orlicz}. We also provide the values of $p^-_\Phi$, $p^+_\Phi,$ $p^\ominus_\Phi,$ and $p^\oplus_\Phi$ for some Orlicz functions. For an Orlicz function, the following pictures summarise the values of $\gamma$ for which we know weighted fractional Orlicz-Hardy inequality holds or fails on $\mathbb{R}^N$ and $\mathbb{R}^N\setminus \{0\}$, respectively. 

\vspace{1cm}
\begin{center}
\begin{tikzpicture}[scale=7]
\draw[<-, thick] (-0.1,0) -- (1.5,0);
\draw[->, thick] (-0.1,0) -- (1.7,0);
\foreach \x/\xtext in {0.5/$\frac{N}{p^\oplus_\Phi}$,1.11/$\frac{N}{p^\ominus_\Phi}$}
    \draw[thick] (\x,0.5pt) -- (\x,-0.5pt) node[below] {\xtext};
\draw (0.16,0.5pt) node[above] { $\gamma<\frac{N}{p^\oplus_\Phi}$};
\draw (1.4,0.5pt) node[above] {  $\gamma>\frac{N}{p^\ominus_\Phi}$};
\draw (0.8,0.5pt) node[above] {grey area};
\draw (0.2,-0.5pt) node[below] {holds};
\draw (1.4,-0.5pt) node[below] {fails};
\draw (0.8,-5pt) node[below] {Weighted fractional Orlicz-Hardy inequality on $\mathbb{R}^N$ };
 \draw[-{Arc Barb[]}, ultra thick, green!70!black] (-0.09,.0) -- (.5,0.0);
  \draw[ ultra thick, gray!70!black] (.5,.0) -- (1.101,0.0);
\draw[-{Arc Barb[]}, ultra thick,
red!70!black] (1.69,0) -- (1.104,0.0);
\end{tikzpicture}
\end{center}
\vspace{1.5cm}
\begin{center}
\begin{tikzpicture}[scale=7]
\draw[<-, thick] (-0.1,0) -- (1.5,0);
\draw[->, thick] (-0.1,0) -- (1.7,0);
\foreach \x/\xtext in {0.5/$\frac{N}{p^\oplus_\Phi}$,1.105/$\frac{N}{p^\ominus_\Phi}$}
    \draw[thick] (\x,0.5pt) -- (\x,-0.5pt) node[below] {\xtext};
\draw (0.16,0.5pt) node[above] {$\gamma<\frac{N}{p^\oplus_\Phi}$};
\draw (0.8,0.5pt) node[above] {grey area};
\draw (1.4,0.5pt) node[above] {  $\gamma>\frac{N}{p^\ominus_\Phi}$};
\draw (0.2,-0.5pt) node[below] {holds};
\draw (1.4,-0.5pt) node[below] {holds};
\draw (0.8,-5pt) node[below] {Weighted fractional Orlicz-Hardy inequality on $\mathbb{R}^N\setminus\{0\}$};
  \draw[-{Arc Barb[]}, ultra thick, green!70!black] (-0.09,.0) -- (.5,0.0);
  \draw[ ultra thick, gray!70!black] (.5,.0) -- (1.101,0.0);
\draw[-{Arc Barb[]}, ultra thick,
green!70!black] (1.69,0) -- (1.104,0.0);
\end{tikzpicture}
    
\end{center}

\begin{remark}\label{rem1} For any Orlicz function $\Phi$, we have the complete knowledge of \eqref{Hardy} and \eqref{Hardy-0} except for $\gamma\in \left[N/p^\oplus_\Phi,N/p^\ominus_\Phi\right]$ (see Theorem \ref{Orlicz-Hardy} and Theorem \ref{Orlicz-Hardy2}). Hence we call the interval $\left[N/p^\oplus_\Phi,N/p^\ominus_\Phi\right]$ as a \textit{\textbf{grey area}}  of $\Phi$.
 Outside this grey area,   \eqref{Hardy} holds for any $\gamma \in \left(-\infty,N/p^\oplus_\Phi\right)$, and fails for any $\gamma\in \left(N/p^\ominus_\Phi,\infty\right)$ (see Theorem \ref{Orlicz-Hardy}). On the other hand, \eqref{Hardy-0} holds for any $\gamma\in  \left[N/p^\oplus_\Phi,N/p^\ominus_\Phi\right]^c$ (see Theorem \ref{Orlicz-Hardy2}). If $\Phi\asymp A_p,$ then we have $p_\Phi^\ominus = p_\Phi^\oplus=p.$ Thus the grey area of such $\Phi$ reduces to a singleton set  $\{N/p\}.$
 \end{remark}
 
  \begin{remark}\label{rem2}
      For the Orlicz function $S(t)=t^p+t^q$ with $q> p$,  the grey area is $\left[N/q,N/p\right]$ (see Example \ref{exm-o}). However, \eqref{Hardy} fails for any $\gamma$ lies in the grey area of $S$, whereas \eqref{Hardy-0} holds also for $\gamma$ in $ \left(N/q,N/p\right)$. We justify these facts as below:
      \begin{enumerate}[(i)]
          \item For $S,$ $p^\ominus_S=p,\;p^\oplus_S=q$ and
\begin{equation*}
    \int_{\mathbb{R}^N}S\left(\frac{|u(x)|}{|x|^\gamma}\right)dx\geq  \int_{B_1(0)}\frac{|u(x)|^q}{|x|^{\gamma q}}dx,\;\;\;\forall\,u\in \mathcal{C}_c^1(\mathbb{R}^N),
\end{equation*}
 where $B_1(0)$ is the open ball centred at the origin with radius $1$. For  $\gamma\geq N/q,$  
 $|x|^{-\gamma q}$ is not locally integrable on $\mathbb{R}^N$ and  hence \eqref{Hardy} fails even for $\gamma \in \left[N/q,\infty\right).$
  Thus by Theorem \ref{Orlicz-Hardy},  \eqref{Hardy} holds for $\gamma<N/q$ and fails for $\gamma\ge N/q.$
  
\smallskip

\item  Let $\gamma\in \left(N/q,N/p\right).$ Then, we apply 
 Theorem \ref{Orlicz-Hardy} and Theorem \ref{Orlicz-Hardy2} to the Orlicz functions $ t^p$ and $ t^q$, respectively to obtain
    \begin{equation*}
         \int_{\mathbb{R}^N}\left(\frac{|u(x)|}{|x|^{\gamma }}\right)^pdx\leq C \int_{\mathbb{R}^N}\int_{\mathbb{R}^N}\left(|x|^{\alpha_1 }|y|^{\alpha_2 }|D_su(x,y)|\right)^pd\mu,\;\;\;\forall\,u\in\mathcal{C}_c^1(\mathbb{R}^N\setminus\{0\}),
    \end{equation*}
     \begin{equation*}
\int_{\mathbb{R}^N}\left(\frac{|u(x)|}{|x|^{\gamma }}\right)^qdx\leq C \int_{\mathbb{R}^N}\int_{\mathbb{R}^N}\left(|x|^{\alpha_1 }|y|^{\alpha_2 }|D_su(x,y)|\right)^qd\mu,\;\;\;\forall\,u\in\mathcal{C}_c^1(\mathbb{R}^N\setminus\{0\}).
    \end{equation*}   
By adding the above two inequalities we concludes that \eqref{Hardy-0} holds for $\gamma\in \left(N/q,N/p\right).$
      \end{enumerate}    
   \end{remark}

\smallskip

 If $\alpha_1=\alpha_2=0$, then  $\gamma=s-\alpha_1-\alpha_2=s$ and \eqref{Hardy-0} reduce to 
 \begin{equation}\label{Hardy-2}
    \int_{\mathbb{R}^N}\Phi\left(\frac{|u(x)|}{|x|^\gamma}\right)dx\leq C\int_{\mathbb{R}^N}\int_{\mathbb{R}^N}\Phi\left(|D_su(x,y)|\right)d\mu,\;\;\;\forall\,u\in \mathcal{C}_c^1(\mathbb{R}^N\setminus\{0\}).
 \end{equation} 
 In \cite[Page 578]{Dyda2007}, authors proved that for $\Phi(t)=t^p$ and $\gamma=N/p,$ the above inequality fails to hold. We prove this result for $S$.

\begin{lemma}\label{A} For $S(t)=t^p+t^q$ with $q> p,$ \eqref{Hardy-2} fails for $\gamma=N/q.$ 
\end{lemma}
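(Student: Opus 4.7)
The plan is to exploit the failure of the critical fractional $L^q$-Hardy inequality due to \cite{Dyda2007} and to neutralize the $L^p$-contribution in $S=t^p+t^q$ by a rescaling trick. Since \eqref{Hardy-2} carries the constraint $\gamma=s$, the hypothesis $\gamma=N/q\in(0,1)$ forces $q>N$ and $s=N/q$. Applying the cited result with exponent $q$ yields a sequence $(v_n)\subset\mathcal{C}_c^1(\mathbb{R}^N\setminus\{0\})$ satisfying
$$ I_q(v_n):=\int_{\mathbb{R}^N}\frac{|v_n(x)|^q}{|x|^N}\,dx=1,\qquad J_q(v_n):=\int_{\mathbb{R}^N}\int_{\mathbb{R}^N}\frac{|v_n(x)-v_n(y)|^q}{|x-y|^{2N}}\,dxdy\longrightarrow 0.$$

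For $A_p(t)=t^p$ one has $p_{A_p}^\oplus=p$ and $\gamma=N/q<N/p$, so Theorem \ref{Orlicz-Hardy} supplies a constant $K>0$, independent of $n$, with
$$I_p(v_n):=\int_{\mathbb{R}^N}\frac{|v_n|^p}{|x|^{Np/q}}\,dx\le K\,J_p(v_n),\qquad J_p(v_n):=\int_{\mathbb{R}^N}\int_{\mathbb{R}^N}\frac{|v_n(x)-v_n(y)|^p}{|x-y|^{N+sp}}\,dxdy;$$
the only role of this bound is to guarantee $J_p(v_n)<\infty$. Put $u_n=\alpha_n v_n$ with $\alpha_n>0$ to be chosen, so that
$$\int S\!\left(\frac{|u_n|}{|x|^{N/q}}\right)dx=\alpha_n^q+\alpha_n^p\,I_p(v_n),\qquad \int\!\!\int S(|D_s u_n|)\,d\mu=\alpha_n^q J_q(v_n)+\alpha_n^p J_p(v_n).$$
Discarding the non-negative $L^p$-piece on the left, the ratio of the left-hand side to the right-hand side is bounded below by
$$\frac{\alpha_n^q}{\alpha_n^q J_q(v_n)+\alpha_n^p J_p(v_n)}=\frac{1}{J_q(v_n)+\alpha_n^{p-q}J_p(v_n)}.$$
If $J_q(v_n)=0$ for some $n$, failure is immediate; otherwise I select $\alpha_n$ with $\alpha_n^{q-p}\ge n\,J_p(v_n)/J_q(v_n)$. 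The denominator is then at most $(1+1/n)\,J_q(v_n)$, and since $J_q(v_n)\to 0$ the ratio tends to $+\infty$, contradicting \eqref{Hardy-2} for $\Phi=S$.

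The crux is that, while the critical $L^q$-Hardy inequality fails, the subcritical $L^p$-piece in $S$ does hold and could a priori compensate on both sides. The rescaling $u_n\mapsto\alpha_n v_n$ with $\alpha_n\to\infty$ promotes the $q$-homogeneous terms to leading order and reduces the $p$-homogeneous terms to $O(\alpha_n^{p-q})$ perturbations, letting the critical failure of Dyda's sequence show through cleanly.
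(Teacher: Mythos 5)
Your argument is correct, but it proceeds by a genuinely different route than the paper. You reduce the statement to the known failure of the critical fractional $q$-Hardy inequality on $\mathcal{C}_c^1(\mathbb{R}^N\setminus\{0\})$ from \cite{Dyda2007}, extract a normalized sequence $(v_n)$ with $I_q(v_n)=1$, $J_q(v_n)\to 0$, and then exploit the inhomogeneity of $S$ by an amplitude scaling $u_n=\alpha_n v_n$ with $\alpha_n\to\infty$, which promotes the $q$-homogeneous (critical) terms and makes the $p$-homogeneous terms $O(\alpha_n^{p-q})$ negligible; the paper instead builds an explicit self-contained counterexample, $u_n(x)=g_n(|x|)$ with $g_n\equiv 1$ on $2/n\le|x|\le 1$ and smooth cutoffs, shows by a four-region splitting of the double integral that $\iint S(|D_su_n|)\,d\mu$ stays bounded uniformly in $n$, while the left-hand side is at least $\log(n/2)$. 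Your reduction is shorter and more modular (it would work for any Orlicz function of the form ``critical power plus strictly subcritical powers''), but it delegates the hard estimate to the cited scalar result, whose counterexample is essentially the same logarithmic cutoff family the paper computes with; the paper's proof is self-contained and, as a bonus, quantifies the blow-up rate of the ratio as $\log n$. Two small points to tidy up: the appeal to Theorem \ref{Orlicz-Hardy} for $A_p$ cannot ``guarantee $J_p(v_n)<\infty$'' --- finiteness of $J_p(v_n)$ is automatic for $\mathcal{C}_c^1$ functions when $s\in(0,1)$ (Lipschitz bound near the diagonal, boundedness and compact support far away), and that is all your choice of $\alpha_n$ actually needs, so the invocation of Theorem \ref{Orlicz-Hardy} can simply be dropped; and the case $J_q(v_n)=0$ is vacuous, since for $v_n\in\mathcal{C}_c^1(\mathbb{R}^N\setminus\{0\})$ it would force $v_n\equiv 0$, contradicting $I_q(v_n)=1$.
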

\begin{proof}
Let $f(t)=e^{-\left(\frac{2-t}{t-1}\right)^2},\; t\in (1,2]$ and for large $n\in \mathbb{N}$
define
        \[g_n(t)=
     \begin{cases}
     \text{$0$} &\quad\text{$0\le t\le \frac{1}{n},\quad\text{$t\ge 2,$}$}\\
     f(nt) & \quad\text{$\frac{1}{n}< t\le \frac{2}{n}$},\\ \text{$1$} &\quad\text{$\frac{2}{n}\le t\le 1$,} \\
     f(3-t) & \quad\text{$1\le t< 2$}.
     \end{cases}
\]
Set $u_n(x)=g_n(|x|), \,x\in \mathbb{R}^N.$ Then one can verify that $u_n\in \mathcal{C}_c^1(\mathbb{R}^N\setminus\{0\})$, $0\leq u_n\le 1,$ and  
   \[   |\nabla u_n(x)|\le 
     \begin{cases} 
       Cn &\quad \text{ $\frac{1}{n}<|x|<\frac{2}{n}$,}\\
       \text{$C$} & \quad\text{$|x|>1$},
     \end{cases}
\] 
for some $C>0.$
We denote  $A(r_1,r_2)=B_{r_2}(0)\setminus B_{r_1}(0).$  Using the symmetry  of the integrand, we can see that
 \begin{multline*}
     \int_{\mathbb{R}^N\setminus\{0\}}\int_{\mathbb{R}^N\setminus\{0\}}S(|D_su_n(x,y)|)d\mu\\\leq 2 \left(\int_{B_{{2/n}}(0)}\int_{B_{3/n}(0)}+\int_{B_{{2/n}}(0)}\int_{A(3/n,\infty)}\right)S(|D_su_n(x,y)|)d\mu\\+2\left(\int_{A(2/n,2)}\int_{A(1,3)}+\int_{A(2/n,2)}\int_{A(3,\infty)}\right)S(|D_su_n(x,y)|)d\mu\\=:2(J_1^r+J_2^r+J_3^r+J_4^r).
 \end{multline*}
For $J_1$, using $q\ge p$ and $\gamma=s=N/q,$ we get 
\begin{multline*}
     J_1= \int_{B_{{2/n}}(0)}\int_{B_{3/n}(0)}S\left(\frac{|u_n(x)-u_n(y)|}{|x-y|^{s}}\right)d\mu\leq  \int_{B_{{2/n}}(0)}\int_{B_{3/n}(0)}S\left(\frac{Cn|x-y|}{|x-y|^{s}}\right)\frac{dxdy}{|x-y|^N}\\=  \int_{B_{{2/n}}(0)}\int_{B_{3/n}(y)}S\left(\frac{Cn|z|}{|z|^{s}}\right)\frac{dzdy}{|z|^N}\le \int_{B_{{2/n}}(0)}\int_{B_{6/n}(0)}\left(\frac{Cn^p}{|z|^{(s-1)p+N}}+\frac{Cn^q}{|z|^{(s-1)q+N}}\right)dzdy
      \\\le \frac{C}{n^{N-sp}}+\frac{C}{n^{N-sq}} \le  C.
\end{multline*}
For $J_2$, using $q\ge p$ and $\gamma=s=N/q,$ we get
\begin{multline*}
     J_2\leq  \int_{B_{{2/n}}(0)}\int_{A(3/n,\infty)}S\left(\frac{2}{|x-y|^{s}}\right)\frac{dxdy}{|x-y|^N}\leq\int_{B_{{2/n}}(0)}\int_{|z|\ge 1/n} S\left(\frac{2}{|z|^{s}}\right)\frac{dzdy}{|z|^N}\\= \int_{B_{{2/n}}(0)}\int_{|z|\ge 1/n} \left(\frac{2^p}{|z|^{sp+N}}+\frac{2^q}{|z|^{sq+N}}\right)dzdy \leq \frac{C}{n^{N-sp}}+\frac{C}{n^{N-sq}}\le C.
\end{multline*}
For $J_3$, we get
\begin{multline*}
    J_3\leq  \int_{A(2/n,2)}\int_{A(1,3)}S\left(C\frac{|x-y|}{|x-y|^{s}}\right)\frac{dxdy}{|x-y|^N}\leq  \int_{B_2(0)}\int_{B_6(0)}S\left(C|z|^{1-s}\right)\frac{dzdy}{|z|^N}\\\leq C\int_{B_2(0)}\int_{B_6(0)}\left(|z|^{(1-s)p-N}+|z|^{(1-s)q-N}\right)dzdy\le C.
\end{multline*}
For $J_4$, we get
\begin{multline*}
     J_4\leq  \int_{A(2/n,2)}\int_{A(3,\infty)}S\left(\frac{2}{|x-y|^{s}}\right)\frac{dxdy}{|x-y|^N} \leq  \int_{B_2(0)}\int_{|z|\ge 1}S\left(\frac{2}{|z|^{s}}\right)\frac{dzdy}{|z|^N}\\\leq C \int_{|z|\ge 1}\left(|z|^{-sp-N}+|z|^{-sq-N}\right)dz\le C.
\end{multline*}  
Thus, combining 
$J_1,\,J_2,\,J_3,\,J_4,$ we get 
\begin{equation}\label{con}
    \int_{\mathbb{R}^N\setminus\{0\}}\int_{\mathbb{R}^N\setminus\{0\}}S\left(|D_su_n(x,y)|\right)d\mu<\infty.
\end{equation}
 Next, we use $\gamma =N/q$ to estimate the left hand side of \eqref{Hardy-2} as below:
$$\int_{\mathbb{R}^N\setminus\{0\}}S\left(\frac{|u_n(x)|}{|x|^\gamma}\right) dx\geq \int_{\{2/n\le |x|\le1\}}S\left(\frac{|u_n(x)|}{|x|^{\gamma}}\right)dx\ge \int_{\{2/n\le |x|\le1\}}\frac{1}{|x|^{\gamma q}}=\log \frac{n}{2}.$$
The above estimate together with \eqref{con} shows that \eqref{Hardy-2} fails to hold for $\gamma=N/q=N/p_S^\oplus.$
 \end{proof}
 
\smallskip

\noindent  \textbf{Open problems:} Can we extend  the above lemma for $\gamma=N/q$ with  $\alpha_1\ne 0$ or $\alpha_2\ne 0?.$ Is it true that \eqref{Hardy-0} holds for $\gamma=N/p$?
\smallskip 

\begin{remark}
 Since the Orlicz functions $S(t)=t^p+t^q$ and $M(t)=\max\{t^p,t^q\}$ are equivalent ($S\asymp M$), the above remark and lemma are also applicable for $M.$   
\end{remark}

\smallskip

Next, we compute  $p^-_\Phi,$ $p^+_\Phi,$ $p^\ominus_\Phi,$ and $p^\oplus_\Phi$  of some Orlicz function $\Phi$. We refer \cite{Mihai2012} and \cite[Example 2.4]{salort-jde} for the computation of   $p^-_\Phi$ and $p^+_\Phi$ of several Orlicz functions. 

\begin{example}\label{exm-o} In the following table, we list some Orlicz functions for which $p_\Phi^\ominus$ and $p_\Phi^\oplus$ are attained at $\Phi$ (i.e., $p_\Phi^\ominus=p^-_\Phi$ and $p_\Phi^\oplus=p^+_\Phi$).

\begin{center}
    \setlength{\arrayrulewidth}{0.5mm}
\setlength{\tabcolsep}{18pt}
\renewcommand{\arraystretch}{1.5}

\begin{tabular}{p{4cm}p{2cm}p{2cm}}
\hline

$\Phi(t)$ & $p^\ominus_\Phi=p^-_\Phi$  & $p^\oplus_\Phi=p^+_\Phi$  \\
\hline

 $t^p+t^q;\; q> p>1$  &$p$ & $q$
  \vspace{.3cm}\\ 
$\max\{t^p,t^q\};\; q> p>1$ &
 $p$ & $q$  \vspace{.3cm} \\

$t^p\ln (1+t);\;p\ge 1$ &  $p$ & $p+1$  \vspace{.3cm} \\

$(1+t)\ln(1+t)-t$  &$1$ & $2$  \vspace{.3cm} \\  

\hline
\end{tabular}
\vspace{.5cm}
\end{center}

\noindent To compute the above values, we set $f_\Phi(t)=\frac{t\varphi(t)}{\Phi(t)},\; t>0.$  Then
\begin{equation*}
p_\Phi^-=\inf_{t>0}f_\Phi(t),\qquad
p_\Phi^+=\sup_{t>0}f_\Phi(t).
 \end{equation*}
Recall that
$$
 p_\Phi^\ominus= \sup\{p_\Psi^-: \Phi \asymp \Psi\}, \qquad
      p_\Phi^\oplus= \inf\{p_\Psi^+: \Phi \asymp \Psi\}.      
  $$
If $\Phi\asymp\Psi,$
 then using \eqref{H3} we obtain   $\Psi(t)\leq \max\big\{t^{p^-_\Psi},t^{p^+_\Psi}\big\}\Psi(1)$ (see \eqref{H3}). Thus by the equivalence, we get $C=C(\Phi,\Psi)>0$ such that
\begin{equation}\label{b-phi}
    \frac{\Phi(t)}{t^{p^-_\Psi}\Psi(1)}\le C, \quad \forall\,t\in (0,1)\;\;\text{and}\quad \frac{\Phi(t)}{t^{p^+_\Psi}\Psi(1)}\le C,\;\;\; \forall\, t>1.
\end{equation}
The above inequalities help us to compute $p_\Phi^\ominus$ and $p_\Phi^\oplus$.

 \noindent$(i)$ For $S(t)=t^p+t^q$ with $q> p$, we have $$f_S(t)=(pt^p+qt^q)(t^p+t^q)^{-1},\quad t>0.$$ Thus, $p^-_S=\inf_{t>0}f_S(t)=p$ and $p_S^+=\sup_{t>0}f_S(t)=q$. If $\Psi\asymp S,$ then by \eqref{b-phi} we have
 $$  \frac{t^p+t^q}{t^{p^-_\Psi}}\le C, \quad \forall\,t\in (0,1)\;\;\text{and}\quad \frac{t^p+t^q}{t^{p^+_\Psi}}\le C,\;\;\; \forall\, t>1.$$ Therefore,
 we must have  $p\ge p_\Psi^-$  and $q\le p^+_\Psi$. Now, since $p^-_S=p$ and $p^+_S=q,$ we conclude that  $p^\ominus_S=p$ and $p^\oplus_S=q.$
    \smallskip
    
\noindent $(ii)$ For $M(t)=\max\{t^p,t^q\}$ with $q>p$, we have
    $$f_M(t)=p\chi_{(0,1)}(t)+q\chi_{[1,\infty)}(t),\quad t>0,$$ and hence $p^-_M=p$ and $p^+_M=q.$ Since  
 $S\asymp M$, we have $p^\ominus_M=p$ and $p^\oplus_M=q.$
    \smallskip
    
\noindent $(iii)$ For $\Phi_1(t)=t^p\ln(1+t)$ with $p\ge 1,$ we have
    $$f_{\Phi_1}(t)=p+\frac{t}{(t+1)\ln (1+t)},\quad t>0.$$ One can compute that $p_{\Phi_1}^-=p$ and $p_{\Phi_1}^+=p+1$. If $\Psi\asymp \Phi_1,$ then by \eqref{b-phi} we have
 $$  \frac{t^p\ln(1+t)}{t^{p^-_\Psi}}\le C, \quad \forall\,t\in (0,1)\;\;\text{and}\quad \frac{t^p\ln(1+t)}{t^{p^+_\Psi}}\le C,\;\;\; \forall\, t>1.$$
Therefore, we must have $p_\Psi^-\le p$ and $p_\Psi^+\ge p$. Consequently, $p^\ominus_{\Phi_1}=p$ and  $p^\oplus_{\Phi_1}\in [p,p+1].$
Next, we calculate the exact value of  $p^\oplus_\Phi$. If 
$\Psi\asymp \Phi_1$, using \eqref{H3} we get $C>0$ such that 
\begin{equation}\label{eq-last}
    \Phi_1(ab)\le Ca^{p^+_\Psi}\Phi_1(b),\quad\forall\, b>0,a>1.
\end{equation}
For $a>1,$ one can verify that  $\frac{\Phi_1(ab)}{\Phi_1(b)}$ is a decreasing function of $b$, and hence  $$\sup_{b>0}\frac{\Phi_1(ab)}{\Phi_1(b)}=a^p\lim_{b\to 0}\frac{\ln(1+ab)}{\ln(1+b)}=a^{p+1},\quad a>1.$$
Thus, from \eqref{eq-last} we get $a^{p+1}\le Ca^{p^+_\Psi},\quad\forall\,a>1.$
Therefore, we must have $p^+_\Psi\ge p+1$, and hence  $p_{\Phi_1}^\oplus=p+1$.
    \smallskip

\noindent $(iv)$ For $\Phi_2(t)=(1+t)\ln(1+t)-t,$ we have
    $$f_{\Phi_2}(t)=1+\frac{t-\ln(1+t)}{(1+t)\ln (1+t)-t},\quad t>0.$$
One can verify that, $p_{\Phi_2}^-=1$ and $p_{\Phi_2}^+=2$. Since $\varphi_2(t)=\ln(1+t),$ we have $\Phi_2(t)\asymp t\ln(1+t) $ (see \eqref{H1}). Now from $(iii)$ we get  $p_{\Phi_2}^\ominus=1$ and $p_{\Phi_2}^\oplus=2$.

\smallskip

 \smallskip

\end{example}

Next, we provide an example of an Orlicz function $\Phi$ such that $p^\ominus_\Phi\ne p_\Phi^-$ and $p^\oplus_\Phi\ne p_\Phi^+.$ 
   \begin{example}\label{ex-2}
Consider the Orlicz function
\[\Phi(t)=
       \begin{cases}
     \text{$t^2$} &\quad\text{$0\le t\le 1 $,}\\
     \text{$\frac{1}{9}\left(2t^3+12t-5\right)$} & \quad\text{$1\le t\le 2 $},\\ \text{$t^2-\frac{1}{9}$} &\quad\text{$2\le t< \infty $.}
     \end{cases}
  \]
     \smallskip
     
     \noindent
It is easy to verify that $\Phi\asymp A_2$. 
Let $f(t)=\frac{t\Phi^\prime(t)}{\Phi(t)},\;t>0$. Then
\begin{figure}[ht!]
\begin{minipage}{0.45\textwidth}
\epsfig{file=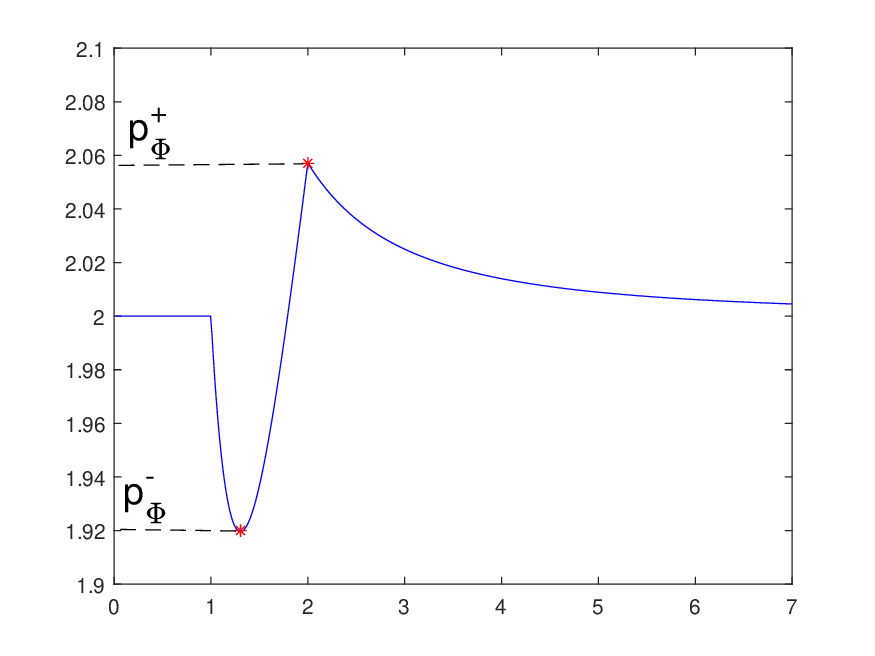,scale=0.45}\\
\centering{ Graph of $f$}
\end{minipage}\hspace{1cm}
\label{fig2}
\end{figure} 

\[f(t)=
     \begin{cases}
     \text{$2$} &\quad\text{$0< t\le 1$,}\\
     \text{$\frac{6(t^3+2t)}{2t^3+12t-5}$} & \quad\text{$1\le t\le 2,$}\\
     
     \text{$\frac{18t^2}{9t^2-1}$} &\quad\text{$2\le t< \infty.$}
     \end{cases}
\]
 By analysing the function  $f$, one can deduce the following:
\begin{align*}
p_\Phi^-<2 ,\quad  & p_\Phi^+=f(2)=\frac{72}{35}>2.
\end{align*}
Since $\Phi\asymp A_2,$ we have $p^\ominus_\Phi=p_\Phi^\oplus=2.$ 
\end{example}

\noindent{\textbf{Open problem:}}
     We anticipate that,  for each Orlicz function $\Phi$, there exists $\gamma_{\Phi}^*\in  \left[N/p^\oplus_\Phi,N/p_\Phi^\ominus\right]$ such that \eqref{Hardy} holds for $\gamma<\gamma_{\Phi}^*$ and \eqref{Hardy} fails for $\gamma\ge \gamma_{\Phi}^*.$ Similarly, \eqref{Hardy-0} holds for every $\gamma\neq \gamma_{\Phi}^* $
and fails for $\gamma=\gamma_{\Phi}^*.$ For $\Phi=A_p,$ we can see that $\gamma_{\Phi}^*=N/p.$

 \begin{center}
	{\bf Acknowledgments} 
 \end{center}
 P. Roy  is supported by SERB's Core Research Grant (CRG/2022/0078ERB. P. Roy would like to thank The Institute of Mathematical Sciences (IMSc), Chennai, India, where parts of the work are carried out.

\smallskip
\noindent{\bf Ethics \& disclosures}: On behalf of all authors, the corresponding author states that there is no conflict of interest.

\noindent{\bf Data availability}: 
We do not analyse or generate any datasets, because our work proceeds within a theoretical and mathematical approach. One can obtain the relevant materials from the references below.

\bibliographystyle{abbrvurl}
\bibliography{Reference}

\begin{thebibliography}{10}

\bibitem{abdell2017}
B.~Abdellaoui and R.~Bentifour.
\newblock Caffarelli-{K}ohn-{N}irenberg type inequalities of fractional order with applications.
\newblock {\em J. Funct. Anal.}, 272(10):3998--4029, 2017.
\newblock \href {https://doi.org/10.1016/j.jfa.2017.02.007} {\path{doi:10.1016/j.jfa.2017.02.007}}.

\bibitem{Adams1975}
R.~A. Adams.
\newblock {\em Sobolev spaces}.
\newblock Pure and Applied Mathematics, Vol. 65. Academic Press [Harcourt Brace Jovanovich, Publishers], New York-London, 1975.

\bibitem{Adimurthi2002}
Adimurthi, N.~Chaudhuri, and M.~Ramaswamy.
\newblock An improved {H}ardy-{S}obolev inequality and its application.
\newblock {\em Proc. Amer. Math. Soc.}, 130(2):489--505, 2002.
\newblock \href {https://doi.org/10.1090/S0002-9939-01-06132-9} {\path{doi:10.1090/S0002-9939-01-06132-9}}.

\bibitem{adisahuroy}
Adimurthi, P.~Roy, and V.~Sahu.
\newblock Fractional boundary hardy inequality for the critical cases, 2023.
\newblock \href {https://arxiv.org/abs/2308.11956} {\path{arXiv:2308.11956}}.

\bibitem{Cianchi2022}
A.~Alberico, A.~Cianchi, L.~Pick, and L.~Slav\'{\i}kov\'{a}.
\newblock Fractional {O}rlicz-{S}obolev embeddings.
\newblock {\em J. Math. Pures Appl. (9)}, 149:216--253, 2021.
\newblock \href {https://doi.org/10.1016/j.matpur.2020.12.007} {\path{doi:10.1016/j.matpur.2020.12.007}}.

\bibitem{Anoop2012}
T.~V. Anoop.
\newblock A note on generalized {H}ardy-{S}obolev inequalities.
\newblock {\em Int. J. Anal.}, pages Art. ID 784398, 9, 2013.
\newblock \href {https://doi.org/10.1155/2013/784398} {\path{doi:10.1155/2013/784398}}.

\bibitem{Anoop2021}
T.~V. Anoop, N.~Biswas, and U.~Das.
\newblock Admissible function spaces for weighted {S}obolev inequalities.
\newblock {\em Commun. Pure Appl. Anal.}, 20(9):3259--3297, 2021.
\newblock \href {https://doi.org/10.3934/cpaa.2021105} {\path{doi:10.3934/cpaa.2021105}}.

\bibitem{subha}
T.~V. Anoop, U.~Das, and S.~Roy.
\newblock On the generalized weighted sobolev inequality.
\newblock {\em arXiv:2305.15289}, 2023.
\newblock URL: \url{https://doi.org/10.48550/arXiv.2305.15289}.

\bibitem{Anoop2015}
T.~V. Anoop, P.~Dr\'{a}bek, and S.~Sasi.
\newblock Weighted quasilinear eigenvalue problems in exterior domains.
\newblock {\em Calc. Var. Partial Differential Equations}, 53(3-4):961--975, 2015.
\newblock \href {https://doi.org/10.1007/s00526-014-0773-2} {\path{doi:10.1007/s00526-014-0773-2}}.

\bibitem{weiwei2022}
W.~Ao, A.~DelaTorre, and M.~d.~M. Gonz\'{a}lez.
\newblock Symmetry and symmetry breaking for the fractional {C}affarelli-{K}ohn-{N}irenberg inequality.
\newblock {\em J. Funct. Anal.}, 282(11):Paper No. 109438, 58, 2022.
\newblock \href {https://doi.org/10.1016/j.jfa.2022.109438} {\path{doi:10.1016/j.jfa.2022.109438}}.

\bibitem{Tarantello2002}
M.~Badiale and G.~Tarantello.
\newblock A {S}obolev-{H}ardy inequality with applications to a nonlinear elliptic equation arising in astrophysics.
\newblock {\em Arch. Ration. Mech. Anal.}, 163(4):259--293, 2002.
\newblock \href {https://doi.org/10.1007/s002050200201} {\path{doi:10.1007/s002050200201}}.

\bibitem{Bahrouni2020}
S.~Bahrouni and H.~Ounaies.
\newblock Embedding theorems in the fractional {O}rlicz-{S}obolev space and applications to non-local problems.
\newblock {\em Discrete Contin. Dyn. Syst.}, 40(5):2917--2944, 2020.
\newblock \href {https://doi.org/10.3934/dcds.2020155} {\path{doi:10.3934/dcds.2020155}}.

\bibitem{roy2022}
K.~Bal, K.~Mohanta, P.~Roy, and F.~Sk.
\newblock Hardy and {P}oincar\'{e} inequalities in fractional {O}rlicz-{S}obolev spaces.
\newblock {\em Nonlinear Anal.}, 216:Paper No. 112697, 22, 2022.
\newblock \href {https://doi.org/10.1016/j.na.2021.112697} {\path{doi:10.1016/j.na.2021.112697}}.

\bibitem{Evans2015}
A.~A. Balinsky, W.~D. Evans, and R.~T. Lewis.
\newblock {\em The analysis and geometry of {H}ardy's inequality}.
\newblock Universitext. Springer, Cham, 2015.
\newblock \href {https://doi.org/10.1007/978-3-319-22870-9} {\path{doi:10.1007/978-3-319-22870-9}}.

\bibitem{Beckner2008}
W.~Beckner.
\newblock Pitt's inequality with sharp convolution estimates.
\newblock {\em Proc. Amer. Math. Soc.}, 136(5):1871--1885, 2008.
\newblock \href {https://doi.org/10.1090/S0002-9939-07-09216-7} {\path{doi:10.1090/S0002-9939-07-09216-7}}.

\bibitem{Mihai2012}
M.~Bocea and M.~Mih\u{a}ilescu.
\newblock A {C}affarelli-{K}ohn-{N}irenberg inequality in {O}rlicz-{S}obolev spaces and applications.
\newblock {\em Appl. Anal.}, 91(9):1649--1659, 2012.
\newblock \href {https://doi.org/10.1080/00036811.2011.571675} {\path{doi:10.1080/00036811.2011.571675}}.

\bibitem{dyda2011}
K.~Bogdan and B.~o. Dyda.
\newblock The best constant in a fractional {H}ardy inequality.
\newblock {\em Math. Nachr.}, 284(5-6):629--638, 2011.
\newblock \href {https://doi.org/10.1002/mana.200810109} {\path{doi:10.1002/mana.200810109}}.

\bibitem{C-K-N1982}
L.~Caffarelli, R.~Kohn, and L.~Nirenberg.
\newblock Partial regularity of suitable weak solutions of the {N}avier-{S}tokes equations.
\newblock {\em Comm. Pure Appl. Math.}, 35(6):771--831, 1982.
\newblock \href {https://doi.org/10.1002/cpa.3160350604} {\path{doi:10.1002/cpa.3160350604}}.

\bibitem{C-K-N1984}
L.~Caffarelli, R.~Kohn, and L.~Nirenberg.
\newblock First order interpolation inequalities with weights.
\newblock {\em Compositio Math.}, 53(3):259--275, 1984.
\newblock URL: \url{http://www.numdam.org/item?id=CM_1984__53_3_259_0}.

\bibitem{Chen2021}
L.~Chen, G.~Lu, and C.~Zhang.
\newblock Maximizers for fractional {C}affarelli-{K}ohn-{N}irenberg and {T}rudinger-{M}oser inequalities on the fractional {S}obolev spaces.
\newblock {\em J. Geom. Anal.}, 31(4):3556--3582, 2021.
\newblock \href {https://doi.org/10.1007/s12220-020-00406-1} {\path{doi:10.1007/s12220-020-00406-1}}.

\bibitem{Cianchi1999}
A.~Cianchi.
\newblock Hardy inequalities in {O}rlicz spaces.
\newblock {\em Trans. Amer. Math. Soc.}, 351(6):2459--2478, 1999.
\newblock \href {https://doi.org/10.1090/S0002-9947-99-01985-6} {\path{doi:10.1090/S0002-9947-99-01985-6}}.

\bibitem{Dyda2022}
B.~Dyda and M.~Kijaczko.
\newblock Sharp weighted fractional hardy inequalities.
\newblock {\em arXiv.2210.06760}, 2022.
\newblock URL: \url{https://doi.org/10.48550/arXiv.2210.06760}.

\bibitem{Dyda2007}
B.~o. Dyda.
\newblock A fractional order {H}ardy inequality.
\newblock {\em Illinois J. Math.}, 48(2):575--588, 2004.
\newblock URL: \url{http://projecteuclid.org/euclid.ijm/1258138400}.

\bibitem{Hans1999}
D.~E. Edmunds and H.~Triebel.
\newblock Sharp {S}obolev embeddings and related {H}ardy inequalities: the critical case.
\newblock {\em Math. Nachr.}, 207:79--92, 1999.
\newblock \href {https://doi.org/10.1002/mana.1999.3212070105} {\path{doi:10.1002/mana.1999.3212070105}}.

\bibitem{Evans1998}
L.~C. Evans.
\newblock {\em Partial differential equations}, volume~19 of {\em Graduate Studies in Mathematics}.
\newblock American Mathematical Society, Providence, RI, 1998.
\newblock \href {https://doi.org/10.1090/gsm/019} {\path{doi:10.1090/gsm/019}}.

\bibitem{bonder2019}
J.~Fern\'{a}ndez~Bonder and A.~M. Salort.
\newblock Fractional order {O}rlicz-{S}obolev spaces.
\newblock {\em J. Funct. Anal.}, 277(2):333--367, 2019.
\newblock \href {https://doi.org/10.1016/j.jfa.2019.04.003} {\path{doi:10.1016/j.jfa.2019.04.003}}.

\bibitem{Rita-Peter2020}
R.~Ferreira, P.~H\"{a}st\"{o}, and A.~M. Ribeiro.
\newblock Characterization of generalized {O}rlicz spaces.
\newblock {\em Commun. Contemp. Math.}, 22(2):1850079, 25, 2020.
\newblock \href {https://doi.org/10.1142/S0219199718500797} {\path{doi:10.1142/S0219199718500797}}.

\bibitem{Leib2008}
R.~L. Frank, E.~H. Lieb, and R.~Seiringer.
\newblock Hardy-{L}ieb-{T}hirring inequalities for fractional {S}chr\"{o}dinger operators.
\newblock {\em J. Amer. Math. Soc.}, 21(4):925--950, 2008.
\newblock \href {https://doi.org/10.1090/S0894-0347-07-00582-6} {\path{doi:10.1090/S0894-0347-07-00582-6}}.

\bibitem{frank2008}
R.~L. Frank and R.~Seiringer.
\newblock Non-linear ground state representations and sharp {H}ardy inequalities.
\newblock {\em J. Funct. Anal.}, 255(12):3407--3430, 2008.
\newblock \href {https://doi.org/10.1016/j.jfa.2008.05.015} {\path{doi:10.1016/j.jfa.2008.05.015}}.

\bibitem{herbst1977}
I.~W. Herbst.
\newblock Spectral theory of the operator {$(p^{2}+m^{2})^{1/2}-Ze^{2}/r$}.
\newblock {\em Comm. Math. Phys.}, 53(3):285--294, 1977.
\newblock URL: \url{http://projecteuclid.org/euclid.cmp/1103900706}.

\bibitem{Kal2009}
A.~Ka\l~amajska and K.~Pietruska-Pa\l~uba.
\newblock On a variant of the {H}ardy inequality between weighted {O}rlicz spaces.
\newblock {\em Studia Math.}, 193(1):1--28, 2009.
\newblock \href {https://doi.org/10.4064/sm193-1-1} {\path{doi:10.4064/sm193-1-1}}.

\bibitem{Krasn1961}
M.~A. Krasnosel$^\prime$ski\u{\i} and J.~B. Ruticki\u{\i}.
\newblock {\em Convex functions and {O}rlicz spaces}.
\newblock P. Noordhoff Ltd., Groningen, 1961.
\newblock Translated from the first Russian edition by Leo F. Boron.

\bibitem{Leray1933}
J.~Leray.
\newblock {\em \'{E}tude de diverses \'{e}quations int\'{e}grales non lin\'{e}aires et de quelques probl\`emes que pose l'hydrodynamique}.
\newblock NUMDAM, [place of publication not identified], 1933.
\newblock URL: \url{http://www.numdam.org/item?id=THESE_1933__142__1_0}.

\bibitem{Li2012}
R.~T. Lewis, J.~Li, and Y.~Li.
\newblock A geometric characterization of a sharp {H}ardy inequality.
\newblock {\em J. Funct. Anal.}, 262(7):3159--3185, 2012.
\newblock \href {https://doi.org/10.1016/j.jfa.2012.01.015} {\path{doi:10.1016/j.jfa.2012.01.015}}.

\bibitem{Li2023}
Y.~Li and X.~Yan.
\newblock Anisotropic {C}affarelli-{K}ohn-{N}irenberg type inequalities.
\newblock {\em Adv. Math.}, 419:Paper No. 108958, 44, 2023.
\newblock \href {https://doi.org/10.1016/j.aim.2023.108958} {\path{doi:10.1016/j.aim.2023.108958}}.

\bibitem{Loss2010}
M.~Loss and C.~Sloane.
\newblock Hardy inequalities for fractional integrals on general domains.
\newblock {\em J. Funct. Anal.}, 259(6):1369--1379, 2010.
\newblock \href {https://doi.org/10.1016/j.jfa.2010.05.001} {\path{doi:10.1016/j.jfa.2010.05.001}}.

\bibitem{salort2022*}
A.~Maione, A.~M. Salort, and E.~Vecchi.
\newblock Maz'ya-{S}haposhnikova formula in magnetic fractional {O}rlicz-{S}obolev spaces.
\newblock {\em Asymptot. Anal.}, 126(3-4):201--214, 2022.
\newblock \href {https://doi.org/10.3233/asy-211677} {\path{doi:10.3233/asy-211677}}.

\bibitem{Mazya2002}
V.~Ma\`zya and T.~Shaposhnikova.
\newblock On the {B}ourgain, {B}rezis, and {M}ironescu theorem concerning limiting embeddings of fractional {S}obolev spaces.
\newblock {\em J. Funct. Anal.}, 195(2):230--238, 2002.
\newblock \href {https://doi.org/10.1006/jfan.2002.3955} {\path{doi:10.1006/jfan.2002.3955}}.

\bibitem{Nguyen2018}
H.-M. Nguyen and M.~Squassina.
\newblock Fractional {C}affarelli-{K}ohn-{N}irenberg inequalities.
\newblock {\em J. Funct. Anal.}, 274(9):2661--2672, 2018.
\newblock \href {https://doi.org/10.1016/j.jfa.2017.07.007} {\path{doi:10.1016/j.jfa.2017.07.007}}.

\bibitem{Nguyenmarco2019}
H.-M. Nguyen and M.~Squassina.
\newblock On {H}ardy and {C}affarelli-{K}ohn-{N}irenberg inequalities.
\newblock {\em J. Anal. Math.}, 139(2):773--797, 2019.
\newblock \href {https://doi.org/10.1007/s11854-025-0077-9} {\path{doi:10.1007/s11854-025-0077-9}}.

\bibitem{salort-jde}
A.~M. Salort.
\newblock Eigenvalues and minimizers for a non-standard growth non-local operator.
\newblock {\em J. Differential Equations}, 268(9):5413--5439, 2020.
\newblock \href {https://doi.org/10.1016/j.jde.2019.11.027} {\path{doi:10.1016/j.jde.2019.11.027}}.

\bibitem{salort2022}
A.~M. Salort.
\newblock Hardy inequalities in fractional {O}rlicz-{S}obolev spaces.
\newblock {\em Publ. Mat.}, 66(1):183--195, 2022.
\newblock \href {https://doi.org/10.5565/publmat6612207} {\path{doi:10.5565/publmat6612207}}.

\end{thebibliography}

\end{document}